\newcommand{\dem}{Demu\v{s}kin~}
\newcommand{\sthom}{\underline{\hom}}
\newcommand{\Soc}{\operatorname{Soc}}
\newcommand{\cdp}{\operatorname{cd}_{p}}
\newcommand{\cd}{\operatorname{cd}}
\newcommand{\res}{\operatorname{res}}
 \newcommand{\baru}[1]{\underline{#1}}
 \renewcommand{\baro}[1]{\overline{#1}}
 \newcommand{\ove}[1]{[#1]}
\renewcommand{\Im}{\operatorname{Im}}
\renewcommand{\phi}{\varphi}
\newcommand{\Rad}{\operatorname{Rad}}
\newcommand{\End}{\operatorname{End}}
\newcommand{\No}{\operatorname{N}}
\newcommand{\FF}{\mathcal{F}}
\newcommand{\DD}{\mathcal{D}}
\newcommand{\Z}{\mathbf{Z}}
\newcommand{\Q}{\mathbf{Q}}
\newcommand{\F}{\mathbf{F}}
\newcommand{\K}{\mathbf{K}}
\renewcommand{\L}{\mathbf{L}}
\renewcommand{\k}{\mathbf{k}}
\theoremstyle{plain}
\newtheorem{thm}{Theorem}
\newtheorem{lm}[thm]{Lemma}
\newtheorem{prop}[thm]{Proposition}
\newtheorem*{prop*}{Proposition}
\newtheorem*{cor}{Corollary}
\newtheorem*{thm*}{Theorem}
\theoremstyle{remark}
\newenvironment{rem}{\medskip\noindent {\em Remark.}}{\medskip}
\newenvironment{rems}{\medskip\noindent {\em Remarks.}}{\medskip}
\theoremstyle{definition}
\newtheorem*{df}{Definition}
\newtheorem*{exs}{Examples}
\newtheorem*{ex}{Example}
\newtheorem*{cex}{Counter-example}
\newcommand{\floor}[1]{\lfloor #1\rfloor}
\newcommand{\fonction}[5]{\begin{array}{lrcl}
#1\colon & #2 & \longrightarrow & #3 \\
    & #4 & \longmapsto & #5 \end{array}}
\newcommand{\fonctionv}[4]{\begin{array}{rcl}
 #1 & \longrightarrow & #2 \\
 #3 & \longmapsto & #4 \end{array}}
\title{Galois module structure of $p^{\text{th}}$ power classes of abelian extensions of local fields}
\author{Alexandre Eimer}
\address{
Alexandre Eimer\\
Universit\'{e} de Strasbourg \& CNRS\\
Institut de Recherche Math\'{e}matique Avanc\'{e}e\\
UMR 7501, F-67000 Strasbourg, France}
\email{eimer@math.unistra.fr}
\newcommand{\cache}[1]{}
\let\oldtocsection=\tocsection
\let\oldtocsubsection=\tocsubsection
\let\oldtocsubsubsection=\tocsubsubsection
\renewcommand{\tocsection}[2]{\hspace{0em}\oldtocsection{#1}{#2}}
\renewcommand{\tocsubsection}[2]{\hspace{2em}\oldtocsubsection{#1}{#2}}
\renewcommand{\tocsubsubsection}[2]{\hspace{2em}\oldtocsubsubsection{#1}{#2}}
\begin{document}

\maketitle

\tableofcontents

\section*{Introduction}

Suppose~$\mathbf{K}/\mathbf{k}$ is a finite Galois extension, and let~$G=Gal(\mathbf{K}/\mathbf{k})$. The group~$J(\mathbf{K})= \mathbf{K}^\times / \mathbf{K}^{\times p}$ of~$p$-th power residue classes in~$\mathbf{K}$, where~$p$ is a prime number, can be seen as an~$\F_p$-vector space, and indeed as an~$\F_p G$-module with the usual Galois action. Understanding~$J(\mathbf{K})$ as such a module can be very useful : see \cite{BLMS} or \cite{MKEG}, for instance, or consider the role played by the $G$-fixed points in \cite{MS1}.

Little is known in general: in~\cite{MiRe}, {\sc J.\ Min\'a\v{c}} and {\sc J. Swallow} study the case when~$G$ is a cyclic~$p$-group (see the references therein for earlier work on the cyclic case); as far as we know, the literature does not contain significant results for other types of groups.

  In this paper, we study numerous $p$-extensions, gradually strengthening the hypotheses: our last and most specific theorem -the real goal of this article- will describe $J(\K)$ when $\K$ is a $p$-\emph{Kummer exstension} over a \emph{local field} that is, an extension $\K/\k$ for which~$G$ is an elementary abelian~$p$-group, assuming that the local field~$\mathbf{k}$ contains a primitive~$p$-th root of unity. As we shall see later those results are in fact closely related to the structure of the maximal pro-$p$-quotient of the absolute Galois group of $\k$: from now on, $\mathcal{G}_{\k}(p)$ denotes this quotient.
  
  Under these assumptions, our description of~$J(\K)$ is fairly complete. When studying a module for a~$p$-group in characteristic~$p$, the information one can hope for is the behaviour after restriction to an arbitrary $\pi$-point (or equivalently to an arbitrary cyclic shifted subgroup, when $G$ is elementary abelian), and the cohomology groups. We address both.

We start with the computation of the cohomology groups $H^{s}(G,J(\K))$.

\begin{thm*}[\hypertarget{GT2}{A}] Let~$\k$ be a local field containing a primitive~$p$-th root of unity, let~$\K$ be a finite, Galois extension of~$\k$, and let~$G=Gal(\K/\k)$. Assume that $G$ is a $p$-group such that the cohomology ring $H^{\bullet}(G,\F_p)$ is Cohen-Macaulay; put $d_{i}(G)=\dim_{\F_{p}}\hat{H}^{i}(G,\F_p)$. Furthermore suppose that $G$ is none of the generalized quaternion groups. We have to distinguish two cases

\begin{enumerate}
\item if the inflation map $\inf\colon H^2(Gal(\K/\k),\F_p)\longrightarrow H^2(\mathcal{G}_{\k}(p),\F_p)$ is zero, the following isomorphisms hold:
\[\left\lbrace\begin{array}{rclr}
H^s (G,J(\K))&\simeq& \hat{H}^{s+2}(G,J(\K))\oplus H^{s-2}(G,J(\K))&s\geq 1 \\
H^0 (G,J(\K))&\simeq& \F_{p}^{d_{2}(G)+(n-d_{1}(G))}
\end{array}\right.
\, . \]
\item if the inflation map $\inf\colon H^2(Gal(\K/\k),\F_p)\longrightarrow H^2(\mathcal{G}_{\k}(p),\F_p)$ is non-zero then
\[\left\lbrace\begin{array}{rcll}H^{1}(G,J(\mathbf{K}))&\simeq &H^{3}(G,\F_{p})& \\
H^{s}(G , J(\mathbf{K}) ) &\simeq & H^{s+2}(G,\F_{p})\oplus H^{s-2}(G, \F_{p})&s\geq 2 \\
H^0(G,J(\K))&\simeq& \F_{p}^{d_{2}(G)-1+(n-d_{1}(G))}
\, .\end{array}\right. \]
\end{enumerate}
 \end{thm*}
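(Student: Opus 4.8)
The plan is to reduce everything to the cohomology of \dem groups and then exploit the Lyndon--Hochschild--Serre (LHS) spectral sequence of the canonical extension. Since $\k$, hence $\K$, is local and contains a primitive $p$-th root of unity, Kummer theory gives a $G$-equivariant isomorphism $J(\K)\cong H^1(\mathcal{G}_{\K}(p),\F_p)$ with trivial action on the coefficients. The group $\mathcal{G}_{\K}(p)$ is a \dem group, so $\cdp\mathcal{G}_{\K}(p)=2$ and its mod-$p$ cohomology is $\F_p$ in degree $0$, the module $J(\K)$ in degree $1$, and one-dimensional in degree $2$; as a $p$-group acts trivially on a one-dimensional $\F_p$-space, the degree-$0$ and degree-$2$ terms are trivial $G$-modules. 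First I would record these facts and fix the resulting three-row shape.

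Next I would set up the LHS spectral sequence of $1\to\mathcal{G}_{\K}(p)\to\mathcal{G}_{\k}(p)\to G\to 1$, namely $E_2^{s,t}=H^s(G,H^t(\mathcal{G}_{\K}(p),\F_p))\Rightarrow H^{s+t}(\mathcal{G}_{\k}(p),\F_p)$. By the previous step it has exactly three rows: rows $0$ and $2$ are $H^s(G,\F_p)$ and the middle row is the sought $H^s(G,J(\K))$. The crucial input is that $\mathcal{G}_{\k}(p)$ is again a \dem group, so the abutment vanishes in total degree $\geq 3$ and is one-dimensional in degree $2$. The degree-$0$ assertion is then immediate from the five-term exact sequence $0\to H^1(G,\F_p)\to J(\k)\to J(\K)^G\to H^2(G,\F_p)\xrightarrow{\inf}H^2(\mathcal{G}_{\k}(p),\F_p)$: writing $n=\dim_{\F_p}J(\k)$ for the minimal number of generators of $\mathcal{G}_{\k}(p)$, it gives $\dim_{\F_p}H^0(G,J(\K))=(n-d_1(G))+\dim\ker(\inf)$, and since the target is one-dimensional $\inf$ has rank $0$ or $1$, yielding the two stated values $d_2(G)+(n-d_1(G))$ and $d_2(G)-1+(n-d_1(G))$.

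The heart of the matter is the range $s\geq 1$. Here I would use multiplicativity: the differentials into the middle row are the cup products $d_2^{s,2}=\,\cup\,\theta\colon H^s(G,\F_p)\to H^{s+2}(G,J(\K))$ with the fixed class $\theta=d_2^{0,2}(1)\in H^2(G,J(\K))$, while those out of it, $d_2^{s,1}\colon H^s(G,J(\K))\to H^{s+2}(G,\F_p)$, are $H^\bullet(G,\F_p)$-linear; there is in addition a single transgression $d_3\colon E_3^{s,2}\to E_3^{s+3,0}$. Poincar\'e duality for $\mathcal{G}_{\K}(p)$ makes $J(\K)$ a self-dual $\F_p G$-module via the nondegenerate cup pairing into $H^2=\F_p$, and combining this with Tate duality for the finite group $G$ relates $d_2^{s,1}$ to $\cup\theta$ by duality. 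Vanishing of the abutment above degree $2$ forces $\ker d_2^{s,1}=\Im d_2^{s-2,2}$ and makes $d_3$ bijective in the relevant ranges, so the middle row is built from a copy of $H^{s-2}(G,\F_p)$ (the image of $\cup\theta$) and a copy of $H^{s+2}(G,\F_p)$; this is the origin of the shift by $\pm2$, which is nothing but the \dem dimension $2$. Conceptually this says that in the stable module category $J(\K)$ is $\Omega^{2}\F_p\oplus\Omega^{-2}\F_p$, whence $\hat H^s(G,J(\K))\cong\hat H^{s+2}(G,\F_p)\oplus\hat H^{s-2}(G,\F_p)$; the inflation hypothesis decides between this Tate form and its ordinary truncation, the only genuine discrepancy occurring at $s=1$, where the nonzero case loses the summand $\hat H^{-1}(G,\F_p)\cong\F_p$ and one gets $H^1(G,J(\K))\cong H^3(G,\F_p)$.

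The step I expect to be the main obstacle is proving that $\cup\theta$ (and, dually, $d_2^{s,1}$) is injective in every degree, so that the two $\pm2$-shifted copies split off cleanly rather than merely matching dimensions. This is exactly where Cohen--Macaulayness is used: it forces the canonical degree-two class realizing $\theta$ to be a non-zero-divisor on $H^\bullet(G,\F_p)$, equivalently to belong to a regular sequence, which is what guarantees maximal rank of all the middle-row differentials. It is also the point at which the generalized quaternion groups must be barred: although their cohomology is Cohen--Macaulay, its period $4$ collapses $\hat H^{s+2}$ onto $\hat H^{s-2}$ and turns the relevant class into a zero-divisor, so the regular-sequence argument and the resulting splitting both break down. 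Once injectivity is in hand, upgrading the dimension counts to the asserted natural isomorphisms, and checking that the two shifted summands do not interfere, is routine.
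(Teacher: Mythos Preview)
Your spectral-sequence setup is sound: the three-row $E_2$ page, the abutment vanishing above degree~$2$, and the $H^0$ computation via the five-term sequence are all correct and match the paper. But the crucial step---controlling the differentials in positive degree---has a real gap, and your conceptual picture of the two cases is off.

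The claim that Cohen--Macaulayness forces $\cup\,\theta$ to be injective is not justified. The class $\theta=d_2^{0,2}(1)$ lives in $H^2(G,J(\K))$, not in $H^2(G,\F_p)$, so calling it ``a non-zero-divisor on $H^\bullet(G,\F_p)$'' or ``part of a regular sequence'' does not make sense as stated. Even granting some reinterpretation, Cohen--Macaulayness only asserts the \emph{existence} of a regular sequence of the correct length; it says nothing about whether a \emph{particular} class is regular. And the abutment constraints alone do not force $d_2^{s,2}$ to be injective or $d_2^{s,1}$ to be surjective: the $d_3$ differential can absorb whatever is left over on the $E_3$ page, so the direct-sum description does not follow. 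Your explanation of the quaternion exclusion (periodicity collapsing $\hat H^{s+2}$ onto $\hat H^{s-2}$) is also not the actual reason.

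More seriously, the assertion that ``in the stable module category $J(\K)$ is $\Omega^2\F_p\oplus\Omega^{-2}\F_p$'' is only true in case~(1). In case~(2) the module is a genuinely non-split extension, namely the cone of a \emph{nonzero} stable map $\kappa\colon\Omega^{-1}\F_p\to\Omega^2\F_p$; the direct-sum shape of $H^s$ for $s\ge 2$ is not a shadow of any splitting. This is exactly where the paper's use of Cohen--Macaulayness differs from yours: the paper builds the short exact sequence $0\to\omega_{-1}(\F_p)\xrightarrow{\kappa}\omega_2(\F_p)\to J^*\to 0$ directly from the one-relator presentation of the \dem group, identifies the inflation condition with the stable vanishing of $\kappa$, and then in case~(2) invokes Benson's theorems on products in Tate cohomology (which require the Cohen--Macaulay hypothesis and exclude generalized quaternion groups) to show that cup product by the negative-degree class $[\kappa]\in\hat H^{-3}(G,\F_p)$ \emph{vanishes} on the relevant ranges---the opposite of a regularity statement. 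This vanishing is what splits the long exact sequence into the asserted direct sums. Your LHS route could plausibly be completed, but you would need to relate the differentials back to $\kappa$ and invoke the same vanishing results; the ``non-zero-divisor'' mechanism you propose does not do the job.
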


The hypothesis on the cohomology ring and on the inflation may seem a little unexpected here, if not completely out of context; yet, as restrictive as it can appear, a large class of groups, which are relevant in Galois theory, verify this hypothesis. For instance, a theorem of Duflot (\cite{Duflot}) implies, that this hypothesis covers the case when $G$ is a non-cyclic elementary abelian $p$-group $G$. For further investigations concerning the importance of this notion in group cohomology, see \cite{BenAlgCo}. Moreover, the hypothesis on the inflation is in fact a rephrasing of a deeper fact which we should extensively study. 

Now, if $\k$ contains a primitive $p$-th root of unity $\xi_{p}$ and $G$ is an elementary abelian $p$-group, which is equivalent to saying that $\K$ is a $p$-Kummer extension, then we can state our second theorem: 

\begin{thm*}[\hypertarget{GT1}{B}]Let $\k$ be a local field such that $\xi_{p}\in\mathbf{k}$ and let $\mathbf{K}/\mathbf{k}$ be an elementary abelian $p$-extension. If $\mathbf{K}/\mathbf{k}$ is not cyclic, then $J(\mathbf{K})^{*}=\mathbf{K}^{\times}/\mathbf{K}^{\times p}$ is a $Gal(\mathbf{K}/\mathbf{k})$-module of constant Jordan type. Furthermore, its stable Jordan type is $[1]^2$. \end{thm*}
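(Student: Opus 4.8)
The plan is to translate the arithmetic statement into Galois cohomology and to exploit that, for a local field containing $\xi_{p}$, the governing pro-$p$ group is a \dem group. Since $[\K:\k]$ is a power of $p$ and $\K/\k$ is Galois, $\K$ lies in the maximal pro-$p$ extension $\k(p)$, and in fact $\k(p)=\K(p)$; thus $\mathcal{H}:=\mathcal{G}_{\K}(p)=Gal(\k(p)/\K)$ is an open subgroup of $\mathcal{G}:=\mathcal{G}_{\k}(p)$ of index $|G|$, with $\mathcal{G}/\mathcal{H}=G$. Because $\xi_{p}\in\k$, the $G$-module $\mu_{p}$ is trivial, and Kummer theory identifies $J(\K)=\K^{\times}/\K^{\times p}$ with $H^{1}(\mathcal{H},\F_{p})$ as an $\F_{p}G$-module (the duality in the notation $J(\K)^{*}$ being harmless, since we will see $J(\K)$ is self-dual). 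An open subgroup of a \dem group is again a \dem group, so $\mathcal{H}$ is a Poincar\'e-duality group of dimension $2$: one has $H^{0}(\mathcal{H},\F_{p})=H^{2}(\mathcal{H},\F_{p})=\F_{p}$ with trivial $G$-action, and a $G$-invariant non-degenerate cup-product pairing $J(\K)\times J(\K)\to\F_{p}$. I would record at the outset the two canonical $G$-fixed classes in $J(\K)$, namely the class coming from the normalised valuation and the class of $\xi_{p}$; these are the expected carriers of the two stable Jordan blocks.

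Next I would reduce the computation of the Jordan type to cyclic shifted subgroups. Writing $m=\dim_{\F_{p}}G\geq 2$ (the hypothesis that $\K/\k$ is not cyclic), the notion of constant Jordan type, and the value of the stable Jordan type, are tested by restriction along the $\pi$-points of $G$, over all extension fields of $\F_{p}$. The $\F_{p}$-rational $\pi$-points are the genuine order-$p$ subgroups $C\leq G$, which correspond to the cyclic degree-$p$ subextensions $\K/\K^{C}$ of local fields. For such a $C$, the $\F_{p}[\Z/p]$-module $J(\K)|_{C}$ is described by the work of Min\'a\v{c} and Swallow together with local class field theory: Hilbert~90 gives $\hat{H}^{\mathrm{odd}}(C,\K^{\times})=0$ and the norm-residue isomorphism gives $\hat{H}^{\mathrm{even}}(C,\K^{\times})\simeq\Z/p$, while the principal units furnish the free part. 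The conclusion is that $J(\K)|_{C}$ is free over $\F_{p}[\Z/p]$ together with exactly the two trivial summands carried by the valuation class and by $\xi_{p}$; that is, its stable Jordan type is $[1]^{2}$. This pins down the candidate answer but, by itself, only at the finitely many rational points.

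The heart of the matter is to promote this to every $\pi$-point, and I would do so by identifying $J(\K)$ in the stable module category with $\Omega^{2}\F_{p}\oplus\Omega^{-2}\F_{p}$. Granting this, the theorem follows at once: restriction along a $\pi$-point is exact and preserves projectives, hence commutes with the Heller operator $\Omega$, and since $\Omega^{\pm 2}_{\Z/p}\F_{p}\simeq\F_{p}$ stably (the Heller operator has period $2$ on the trivial module for $\Z/p$), every cyclic shifted subgroup sees the stable type $[1]\oplus[1]=[1]^{2}$; as the total dimension is constant along the family, the full Jordan type is constant. This candidate is forced by Theorem~A: in the non-zero-inflation case the isomorphism obtained there reads
\[ H^{s}(G,J(\K))\simeq H^{s+2}(G,\F_{p})\oplus H^{s-2}(G,\F_{p}), \]
which is precisely $\hat{H}^{s}\bigl(G,\Omega^{2}\F_{p}\oplus\Omega^{-2}\F_{p}\bigr)$. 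To construct the stable identification I would use the Poincar\'e-duality presentation of $\mathcal{H}$ and the two $G$-fixed classes: these yield exact sequences $0\to\F_{p}\to J(\K)\to Q\to 0$ and, dually, $0\to Q'\to J(\K)\to\F_{p}\to 0$, and the single \dem relation, interpreted through local Tate duality, should identify $Q$ with $\Omega^{-2}\F_{p}$ and $Q'$ with $\Omega^{2}\F_{p}$ up to free summands, the duality interchanging the two shifts.

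The main obstacle is exactly this identification at the non-rational, generic $\pi$-points, where the wild ramification of $\K/\k$ is felt: the principal units $U_{\K}^{(1)}/p$ are only rationally $\F_{p}G$-free and, by Noether's theorem, genuinely non-free, so one cannot split off a free summand and reduce to a bare $\F_{p}^{2}$. I want to stress that a semicontinuity argument is not sufficient here: the rational value $[1]^{2}$ together with upper semicontinuity of the number of non-free blocks does not exclude a generic type $[2]$ degenerating to $[1]^{2}$, i.e.\ the possibility that the two fixed classes assemble into a ``shifted'' $J_{2}$ of non-constant type. Ruling this out is the real content, and it is precisely what the self-dual Poincar\'e-duality structure provides: the symmetry $\Omega^{2}\leftrightarrow\Omega^{-2}$ coming from local duality keeps the two blocks genuinely separate, yielding $\Omega^{2}\F_{p}\oplus\Omega^{-2}\F_{p}$ rather than a single Heller shift, and hence constant Jordan type with stable type $[1]^{2}$.
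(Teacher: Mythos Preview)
Your central step --- identifying $J(\K)$ stably with $\Omega^{2}\F_{p}\oplus\Omega^{-2}\F_{p}$ --- is not true in general, and this is where the argument breaks. The paper constructs a short exact sequence
\[
0 \longrightarrow \omega_{-1}(\F_{p}) \xrightarrow{\ \kappa\ } \omega_{2}(\F_{p}) \longrightarrow J^{*} \longrightarrow 0,
\]
and $J^{*}$ is stably $\Omega^{2}\F_{p}\oplus\Omega^{-2}\F_{p}$ \emph{only} when the class $[\kappa]\in\hat H^{-3}(G,\F_{p})$ vanishes. There is an explicit obstruction: if $\kappa$ were always stably zero one would get $2d_{1}(G)\leq n$, which fails e.g.\ for the maximal Kummer extension (and concrete examples over $\Q_{3}(\xi_{3})$ show both cases occur). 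In the non-split case $J^{*}$ is genuinely a different stable module --- for $G=E_{2}$ it is $\F_{p}^{2}$, not $\Omega^{2}\F_{p}\oplus\Omega^{-2}\F_{p}$. Your appeal to Theorem~A does not rescue this: matching all Tate cohomology groups does not force a stable isomorphism, and your sketch with the sequences $0\to\F_{p}\to J\to Q\to 0$ and its dual never establishes why $Q$ should be $\Omega^{-2}\F_{p}$ rather than some other module with the same cohomology.

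The idea that actually closes the gap is local rather than global. One does not need to identify $J^{*}$ in the stable category of $G$; one only needs to know what happens after restriction along each $\pi$-point $\alpha$. The key input (due to Benson) is that for an elementary abelian $p$-group of rank $\geq 2$, every class in $\hat H^{-\ell}(G,\F_{p})$ with $\ell>0$ restricts to zero along every $\pi$-point. Hence $\res_{\alpha}([\kappa])=0$ even when $[\kappa]\neq 0$ globally, so the restricted triangle splits over $E_{1}$, and one reads off $\dim H^{0}$ and $\dim H^{1}$ of $\alpha_{*}(J^{*})$ from those of $\alpha_{*}(\omega_{2}\F_{p})$ and $\alpha_{*}(\omega_{-1}\F_{p})$. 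This yields exactly two non-maximal blocks, and a dimension count shows both have size $1$. The hypothesis ``not cyclic'' enters precisely here: it guarantees rank $\geq 2$, which is what makes negative-degree Tate classes die on every $\pi$-point. Your rational-$\pi$-point computation via Min\'a\v{c}--Swallow is correct but, as you yourself note, does not reach the generic point; the missing mechanism is this vanishing of $\res_{\alpha}(\kappa)$, not a global stable splitting.
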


See \cite{BensL} for information about modules of constant Jordan type (we recall the basics below, of course). These are actively studied at the moment, and it is perhaps a surprise to see examples arising from field theory.

Before stating other results, we first illustrate our theorem with a concrete example. Consider $p=2$ and $\k = \Q_2$: it contains clearly the primitive $2^{nd}$ root of unity (most commonly known as $-1$). Here, as is well known, the maximal $p$-elementary abelian extension is $\K = \Q_2[\sqrt{\Q_2}] = \Q_2 [\sqrt{2},\sqrt{-1},\sqrt{5}]$ (information of this sort is not used by our methods, and we mention this for concreteness only). Here $G=Gal(\K/\k)\cong C^{3}_{2}$ and~$\dim_{\F_2} J(\K) = 10$. We will explain in this article how to choose generators $x_1, x_2, x_3$ of $G$ and how to construct a convenient basis of $J(\K)$ in order to describe the action. The module structure of $G$ on $J(\K)$ can be given by three matrices, say $M_1, M_2$ and~$M_3$ where $M_{i}$ is the matrix of $x_{i}-id$ in our basis; those matrices commute and are nilpotent. The theorem then says, in this particular case, that the matrix
\[a M_1+b M_2+c M_3=\begin{pmatrix}
& & & & & & & & & \\
& & & & & & & & & \\
& & & & & & & & & \\
& & & & & & & & & \\
& & & & & & & & & \\
b & c & & a & & & & & & \\
& a & & b & & & & & & \\
c & & b & & a & & & & & \\
& & &c &b & & & & & \\
& &a & &c & & & & & \\
\end{pmatrix} \, , \]   
where $a, b, c$ are in an algebraic closure of $\F_2$ (and the blanks are zero) has constant rank (namely 4), as long as $(a,b,c)\neq (0,0,0)$. Of course this may be verified directly.

Among all Kummer extensions, the maximal Kummer extension has retained a lot of attention and its Galois module structure was already investigated -partially- in \cite{MKEG}, when $p=2$. Here we will extend some of the formulae appearing in this paper to the case when~$p$ is odd, restricting attention to local fields (as opposed to the more general~$C$-fields considered in {\em loc.\ cit.}). We gather a lot of explicit information about~$J(K)$, leading in particular to the following.

\begin{prop*}[\hypertarget{Inv}{C}]Let $\K$ be the maximal $p$-abelian extension of a local field $\k$, which possesses a primitive $p$-th root of unity. Then the $Gal(\K/\k)$-module $J(\K)$ has socle length:
\[l(J(\K))=n (p-1)-1 \, , \]
where $n$ is the dimension of $\k^{\times}/\k^{\times p}$ as an $\F_p$ vector space. Furthermore, if $p$ is odd, the following equality holds
\[\dim_{\F_p}\Soc^{2}(J(\K))/\Soc(J(\K))=\frac{(n-2)(n^2 +5n +3)}{3}\, . \] \end{prop*}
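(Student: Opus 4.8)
The plan is to realize $J(\K)$ explicitly as a homology module over the group algebra $\F_p[G]$, with $G\cong C_p^n$, and then to read off its socle filtration by commutative algebra over this ring. Since $G$ is finite, $\K/\k$ is a finite extension of local fields, and the hypothesis $\xi_p\in\k$ makes $\mathcal{G}_\k(p)$ a \dem group of rank $n=\dim_{\F_p}\k^\times/\k^{\times p}$ with a single defining relation $r$ given by the Demushkin--Serre--Labute classification. Because $\K$ is the maximal elementary abelian $p$-extension, $G=\mathcal{G}_\k(p)/\Phi(\mathcal{G}_\k(p))$ and $H:=\Phi(\mathcal{G}_\k(p))=\mathcal{G}_\K(p)$, so that $J(\K)\cong H^1(H,\F_p)=\operatorname{Hom}(H,\F_p)$ as an $\F_p[G]$-module. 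Restricting the minimal length-two resolution of $\F_p$ over $\F_p[[\mathcal{G}_\k(p)]]$ to $H$ and dualizing produces a three-term complex of $\F_p[G]$-modules
\[ R_0\xrightarrow{\ \partial_1\ }R_0^{\,n}\xrightarrow{\ \partial_2\ }R_0,\qquad R_0:=\F_p[G]\cong\F_p[s_1,\dots,s_n]/(s_1^p,\dots,s_n^p), \]
with $s_i=x_i-1$, where $\partial_1=(s_1,\dots,s_n)^{t}$ is the Koszul differential and $\partial_2$ is the reduction modulo $H$ of the Fox derivatives of $r$; an explicit computation gives $\partial_2=(s_1^{q-1}-s_2,\,s_1,\,-s_4,\,s_3,\dots,-s_n,\,s_{n-1})$, where $q=p^{s}$ records the largest power of $p$ with $\xi_q\in\k$. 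Then $J(\K)\cong\ker\partial_2/\Im\partial_1$, and an Euler-characteristic count recovers $\dim_{\F_p}J(\K)=(n-2)p^{n}+2$ as a first consistency check.

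For the socle length I would argue as follows. The ring $R_0$ is a graded complete intersection, local with $\Rad R_0=(s_1,\dots,s_n)$, Gorenstein, with one-dimensional socle $\Soc(R_0)=\big((s_1\cdots s_n)^{p-1}\big)$ sitting in top degree $n(p-1)$; in particular $\Rad^{\,n(p-1)+1}R_0=0$. Since every entry of $\partial_2$ lies in $\Rad R_0$ and $\partial_1,\partial_2$ raise the $s$-degree, multiplication by $\Rad^{\,n(p-1)-1}$ pushes any cycle in $\ker\partial_2$ into the top degrees, where a finite linear-algebra check forces it into $\Im\partial_1$; this yields $\Rad^{\,n(p-1)-1}J(\K)=0$, hence $l(J(\K))\leq n(p-1)-1$. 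For the matching lower bound I would exhibit a single cycle supported in the lowest degree two whose $\Rad^{\,n(p-1)-2}$-multiple is a nonzero socle element, its pairing against $\Soc(R_0)$ being nondegenerate by Gorenstein duality; this gives $l(J(\K))=n(p-1)-1$. The entry $s_1^{q-1}$ only perturbs the very top degrees and is handled by passing to the associated graded, which explains why the answer is independent of $q$.

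For the second socle quotient I would first pin down $\Soc(J(\K))=J(\K)^{G}$ from Theorem~\hyperlink{GT2}{A}: for $G\cong C_p^n$ one has $d_1(G)=n$ and, for $p$ odd, $d_2(G)=\binom{n+1}{2}$, so $\dim_{\F_p}\Soc(J(\K))=\binom{n+1}{2}$, up to the explicit correction of Theorem~\hyperlink{GT2}{A} according to whether the inflation map vanishes (here governed by the \dem relation). The key structural input is that the complex above is self-dual: up to the symplectic change of variables $s_{2i-1}\leftrightarrow s_{2i}$ and a sign, $\partial_2$ is the transpose of $\partial_1$, and this change of variables is an automorphism of $R_0$. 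Hence $M:=J(\K)$ is self-dual for $\F_p$-linear duality, so that $\Soc^{2}(M)/\Soc(M)\cong\big(\Rad M/\Rad^2 M\big)^{\vee}$, and $\dim_{\F_p}\Soc^2/\Soc$ equals the dimension of the \emph{second radical layer} of $M$, i.e.\ is governed by the minimal generators and first syzygies of $M$. I would then compute this layer by solving, inside $\ker\partial_2$, the linear conditions $\Rad^2\!\cdot m\subseteq\Im\partial_1$ modulo $\Soc$, organized according to the monomial basis of $R_0$; assembling the contributions yields $\tfrac{(n-2)(n^2+5n+3)}{3}$.

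The main obstacle is this last count. Unlike the socle length, the dimension of $\Soc^{2}/\Soc$ is not visible from the Hilbert series of $J(\K)$ alone: because $J(\K)$ is generated in several internal degrees, the radical filtration does not coincide with the grading, so the second radical layer draws simultaneous contributions from the degree-two and degree-three graded pieces of the homology. One must therefore control how the non-homogeneous differential $\partial_2$ mixes internal degrees, produce an explicit $\F_p$-basis of $\Rad M/\Rad^2 M$, and verify that the $q$-dependent term $s_1^{q-1}$ contributes nothing to this particular layer, so that the final cubic in $n$ is independent of both $p$ and $q$. I expect the self-duality just noted to be the decisive simplification, replacing the intractable direct description of $\Soc^2$ by the more tractable bookkeeping of minimal generators and relations of $J(\K)$.
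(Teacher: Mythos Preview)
Your Fox-calculus realization of $J(\K)$ as the middle homology of $R_0\xrightarrow{\partial_1}R_0^{\,n}\xrightarrow{\partial_2}R_0$ is a legitimate setup, and is essentially equivalent to the paper's: the paper builds $\omega_2(\F_p)=\Phi(\mathcal F_n)/\Phi^{(2)}(\mathcal F_n)$ by hand via commutator identities, remarks that this coincides with the Fox-derivative description (the reference to Blackburn after Proposition~\ref{GB}), and then passes to $J^*$ by imposing the single \dem relation. Your self-duality observation is also correct and is nothing other than the Poincar\'e duality of the \dem group (equivalently the Tate duality used in the paper). So there is no conceptual divergence.

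The gap is that you have written a plan, not a proof. For the socle length you say a ``finite linear-algebra check'' forces top-degree cycles into $\Im\partial_1$ and that you ``would exhibit'' a witness cycle; for $\Soc^2/\Soc$ you say you ``would then compute this layer'' and explicitly call it ``the main obstacle''. Neither is carried out, and in both places the execution \emph{is} the content. The paper does precisely this execution, but not via the graded/Gorenstein heuristics you sketch. Instead it produces an explicit $\F_p$-basis $\mathcal B'$ of $J^*$ (Corollary~\ref{Base} refined by Proposition~\ref{Basis}), and then both claims are read off directly from the radical series: one verifies $\Rad^{n(p-1)-1}J^*=0$ by applying the relations to a generic generator, and exhibits the concrete basis vector $(x_1,x_n)X_n^{p-2}X_2^{p-2}\prod_{j\ne 2,\,j<n}X_j^{p-1}$ to show $\Rad^{n(p-1)-2}J^*\ne 0$. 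For the second layer the paper writes down a basis of $\Rad(\omega_2(\F_p))/\Rad^2(\omega_2(\F_p))$ consisting of the elements $(x_i,x_j)X_i^{p-1}$, $(x_i,x_j)X_j^{p-1}$ and $(x_i,x_j)X_k$ with $k\le j$, counts these to get $\tfrac{n(n-1)(n+4)}{3}$, and then removes the $n+2$ elements accounted for by the \dem relation (the $(x_{n-1},x_n)X_k$ for $1\le k\le n$ together with $(x_{n-1},x_n)X_{n-1}^{p-1}$ and $(x_{n-1},x_n)X_n^{p-1}$) to obtain $\tfrac{(n-2)(n^2+5n+3)}{3}$.

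Two smaller points. First, your own diagnosis that the radical filtration does not coincide with the $s$-grading is exactly why the Gorenstein/Hilbert-series argument you propose cannot replace the explicit count; the paper's basis is what resolves this. Second, in the present case $\K$ is the \emph{maximal} elementary abelian extension, so $d_1(G)=n$ and the inflation $H^2(G,\F_p)\to H^2(\mathcal G_\k(p),\F_p)$ is nonzero (indeed the paper observes that $\kappa$ cannot be stably zero here, since $2d_1(G)\le n$ would fail); hence Theorem~A gives $\dim\Soc(J(\K))=d_2(G)-1=\binom{n+1}{2}-1$, not $\binom{n+1}{2}$.
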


Both theorems will appear as consequences of the following key statement. To formulate it, we must recall that~$\F_p G$-modules are best studied in the {\em stable category}, in which~$\underline{\hom}(M,N)$ consists of all usual modules homomorphisms modulo those morphisms which factor through a projective module. This category is triangulated, and is thus endowed with an invertible endofunctor~$\Omega $, frequently called the {\em Heller shift}. At the heart of this paper is the next result, which we formulate in vague terms for now.

\begin{prop*}[\hypertarget{GT3}{D}]
  
Let~$G= Gal(\K/\k)$ as above, which we assume to be a~$p$-group. For~$s \in \{ -1, 2 \}$, there exists a module~$\omega_s(\F_p)$ which is stably isomorphic to~$\Omega^s(\F_p)$, and an exact sequence
   
\[\xymatrix{0 \ar[r] & \omega_{-1}(\F_p) \ar[r]^{\kappa} & \omega_{2}(\F_p) \ar[r] & J(\K) \ar[r] & 0 } \, . \]

\end{prop*}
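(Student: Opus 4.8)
The plan is to reduce the statement to homological algebra over the completed group algebra of the \dem group $\mathcal{G}_{\k}(p)$, and then to read off the two Heller shifts from a single short free resolution. Since $G$ is a $p$-group, $\K$ lies in the maximal pro-$p$ extension $\k(p)$ of $\k$, and in fact $\K(p)=\k(p)$; writing $N=\operatorname{Gal}(\k(p)/\K)=\mathcal{G}_{\K}(p)$, we have $G=\mathcal{G}_{\k}(p)/N$. Because $\xi_{p}\in\k\subseteq\K$, Kummer theory gives a $G$-equivariant isomorphism $J(\K)=\K^{\times}/\K^{\times p}\cong\operatorname{Hom}(N,\F_{p})=H^{1}(N,\F_{p})$ with trivial twist, the $G$-action being the one induced by conjugation in $\mathcal{G}_{\k}(p)$. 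Applying Shapiro's lemma to the open subgroup $N$ then yields a $G$-module isomorphism $J(\K)\cong H^{1}(\mathcal{G}_{\k}(p),\F_{p}G)$, where $\F_{p}G=\operatorname{Ind}_{N}^{\mathcal{G}_{\k}(p)}\F_{p}$ carries the residual $G$-action.

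Next I would exploit that $\mathcal{G}_{\k}(p)$ is \dem and hence a Poincar\'{e} duality group of dimension $2$ at $p$: the trivial module $\F_{p}$ admits a minimal free resolution over $\Lambda=\F_{p}[[\mathcal{G}_{\k}(p)]]$ of the shape
\[0\longrightarrow\Lambda\longrightarrow\Lambda^{n}\longrightarrow\Lambda\longrightarrow\F_{p}\longrightarrow0,\]
with $n=\dim_{\F_{p}}\k^{\times}/\k^{\times p}$. Applying $\operatorname{Hom}_{\Lambda}(-,\F_{p}G)$ produces a complex of free $\F_{p}G$-modules
\[\F_{p}G\xrightarrow{\ \alpha\ }(\F_{p}G)^{n}\xrightarrow{\ \beta\ }\F_{p}G\]
whose cohomology computes $H^{\bullet}(\mathcal{G}_{\k}(p),\F_{p}G)=H^{\bullet}(N,\F_{p})$; in particular $J(\K)=\ker\beta/\operatorname{im}\alpha$, while $\ker\alpha=H^{0}(N,\F_{p})$ and $\operatorname{coker}\beta=H^{2}(N,\F_{p})$.

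The Heller shifts now fall out by splitting this complex into short exact sequences. Since $N$ is pro-$p$, $\ker\alpha=H^{0}(N,\F_{p})=\F_{p}$, which is exactly the socle $\Soc(\F_{p}G)=\F_{p}\cdot\sum_{g}g$; hence $\operatorname{im}\alpha\cong\F_{p}G/\Soc(\F_{p}G)$, and because $\F_{p}G$ is the injective hull of its socle this module is a genuine representative $\omega_{-1}(\F_{p})$ of $\Omega^{-1}(\F_{p})$. At the other end, $N$ being \dem forces $\operatorname{coker}\beta=H^{2}(N,\F_{p})\cong\F_{p}$; granting that this is the trivial $G$-module, the surjection $(\F_{p}G)^{n}\twoheadrightarrow\operatorname{im}\beta$ has cokernel $\F_{p}$, so $\operatorname{im}\beta$ is the augmentation ideal $I_{G}=\Omega(\F_{p})$. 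The resulting sequence $0\to\ker\beta\to(\F_{p}G)^{n}\to\Omega(\F_{p})\to0$ with projective middle term identifies $\omega_{2}(\F_{p}):=\ker\beta$ with $\Omega^{2}(\F_{p})$ up to a free summand, i.e.\ stably. Since $\beta\circ\alpha=0$ we have $\operatorname{im}\alpha\subseteq\ker\beta$, which supplies $\kappa$, and $J(\K)=\ker\beta/\operatorname{im}\alpha$ then reads as precisely the asserted exact sequence $0\to\omega_{-1}(\F_{p})\xrightarrow{\kappa}\omega_{2}(\F_{p})\to J(\K)\to0$.

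The main obstacle is the one point flagged as ``granted'' above: that $\operatorname{coker}\beta=H^{2}(N,\F_{p})$ is the trivial $G$-module rather than a nontrivial twist. This is what pins $\omega_{2}(\F_{p})$ to $\Omega^{2}(\F_{p})$, and not to some twisted shift, and it is exactly where the arithmetic enters: by Tate local duality the dualizing module of the \dem group $N$ is $\mu_{p}$, so $H^{2}(N,\F_{p})\cong\mu_{p}^{\vee}$ as a $G$-module, and this is trivial precisely because $\xi_{p}\in\k$ is $G$-fixed. Verifying this $G$-equivariance and the vanishing of the twist throughout the Kummer-theoretic and Shapiro identifications --- rather than the formal homological bookkeeping, which is routine once the shape $(1,n,1)$ of the resolution is in hand --- is the delicate part of the argument.
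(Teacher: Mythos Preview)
Your argument is correct and takes a route genuinely different from the paper's. You work homologically: starting from the $(1,n,1)$ free resolution of $\F_p$ over $\Lambda=\F_p[[\mathcal{G}_\k(p)]]$ that encodes the \dem structure, you apply $\hom_\Lambda(-,\F_pG)$, invoke Shapiro to identify the cohomology with $H^\bullet(N,\F_p)$, and then read off $\omega_{-1}$ and $\omega_2$ as the image of $\alpha$ and the kernel of $\beta$. The paper instead proceeds group-theoretically: it lifts $\mathcal{H}=N$ along the presentation $\pi\colon\mathcal{F}_n\twoheadrightarrow\mathcal{D}_{k,n}$ to $\tilde{\mathcal{H}}=\pi^{-1}(\mathcal{H})$, quotes a Gasch\"utz--Blackburn--type theorem (Proposition~\ref{GB}) to identify $\tilde{\mathcal{H}}/\Phi(\tilde{\mathcal{H}})$ stably with $\Omega^2(\F_p)$, and then shows by a dimension count (the subgroup-rank formulae for free and \dem pro-$p$ groups) that the kernel of the induced surjection onto $J^*=\mathcal{H}/\Phi(\mathcal{H})$ is cyclic of $\F_p$-dimension $|G|-1$, hence isomorphic to $\Omega^{-1}(\F_p)$. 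Your approach is conceptually cleaner and isolates precisely where the hypothesis $\xi_p\in\k$ enters: it is what makes $H^2(N,\F_p)\cong\mu_p^\vee$ trivial as a $G$-module, so that $\operatorname{im}\beta$ is the augmentation ideal rather than a twist of it. The paper's approach, on the other hand, makes the map $\kappa$ completely explicit --- it sends the generator of $\omega_{-1}(\F_p)$ to the class of the \dem relator $\delta$ in $\tilde{\mathcal{H}}/\Phi(\tilde{\mathcal{H}})$ --- and this concreteness is exploited heavily in \S\ref{P3} to compute $\kappa^\#$ on a basis of $H^2(G,\F_p)$ and in \S\ref{P4} to distinguish the two cases of Theorem~\ref{MT1}.
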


Strangely enough this result is valid with no further assumption on~$G$, although our main theorems above require more. Note that the condition on the inflation in theorem~\hyperlink{GT1}{A} is expressed in a more natural way using $\kappa$. 

The map~$\kappa $ seems to be the most profound object entering our discussions. As a stable map $\Omega^{-1}(\F_p) \longrightarrow \Omega^2(\F_p)$, it can be interpreted as a class in $\hat H^{-3}(G, \F_p) = H_2(G, \F_p)$, or as a linear map $H^2(G, \F_p) \longrightarrow \F_p$ ; in the case when~$\K/\k$ is the maximal Kummer extension,  we describe explicitly its values on the usual basis for~$H^2(G, \F_p)$ when~$G$ is elementary abelian. This should not astonish the reader: indeed it should be compared to a theorem of {\sc U. Jannsen} (\cite[Theorem 5.4]{Jannsen}), where {\sc Jannsen} proved that for \emph{any} Galois $p$-extension $\K/\k$ of a local field $\k$, the $Gal(\K/\k)$-module $(\K^{\times}/\K^{\times p})^{*}$ is determined by the group of $p$-th roots of unity in $\K$ and a canonical class $\chi$ in the linear dual of $H^2(Gal(\K/\k),\mu_{\k}(p))$.

In order to establish the existence of the exact sequence, as befits Galois theory, we turn questions of field theory into questions of group theory. A bit more precisely, let~$\mathcal{G} $ be a pro-$p$-group and $\mathcal{H}$ a closed, normal subgroup, let~$\Phi (\mathcal{H} )$ denote its Frattini subgroup, that is

\[ \Phi (\mathcal{H} ) = \mathcal{H}^p(\mathcal{H} , \mathcal{H} ) \, .   \]

Thus~$\mathcal{H} / \Phi (\mathcal{H} )$ is the largest quotient of~$\mathcal{H} $ which is elementary abelian. After a little translation, our results will be about ~$J=\mathcal{H} /\Phi (\mathcal{H} )$, seen as an~$\F_p G$-module, where~$G= \mathcal{G} / \mathcal{H} $. We will investigate this when~$\mathcal{G} $ is a free pro-$p$-group, and then in the case when~$\mathcal{G} $ is the largest~$p$-quotient of~$Gal(\bar{\mathbf{k}}/\mathbf{k})$, which is a Demu\v{s}kin group, provided the fact that $\mathbf{k}$ is a local field with a primitive $p^{th}$ root of unity.

\bigskip

{\em Organization of the paper.} In the first section, we recall some basic facts about modules of constant Jordan type and Kummer theory; this culminates with the statement of two ``Main Theorems'', more general (and more technical) than the above, covering for example the case when~$\k$ does not contain enough roots of unity. Then in section \ref{P2} we construct the short exact sequence making its appearance in \hyperlink{GT3}{D}, and use it to prove the Main Theorems (in fact refining them in certain cases). Section \ref{P3} completes the picture with a focus on the maximal $p$-elementary abelian extension, which means in group theory that $\mathcal{H}=\Phi(\mathcal{G})$: we give some explicit computations which hopefully flesh out the two previous sections and take profit of them to prove the announced formulae in proposition \hyperlink{Inv}{C}. Finally, in section \ref{P4}, we address the case of two peculiar extensions over $\Q_3 (\zeta_3)$ in order to show that both situations, distinguished in theorem \hyperlink{GT1}{A}, occur.

\bigskip

{\em Acknowledgements.} This article is a part of our PhD thesis: it would not have been possible without the help of our advisor Pierre Guillot, and we thank him for his precious piece of advice and the numerous corrections on this paper. Furthermore, we would like to express our gratitude towards Dave Benson, who suggested some arguments which enabled us to considerably improve our results and proofs: we owe him a lot.

\section{Background material}

\subsection{Modules of constant Jordan type}\label{JT}

Here, we quickly recall some basic facts about modules of constant Jordan type. A more general approach is contained in the fundamental article of {\sc Carlson}, {\sc Friedlander} and {\sc Pevtosva} (\cite{JordanType}), treating the case of group schemes and a more exhaustive one about elementary abelian $p$-groups in {\sc Benson}'s book (\cite{BensL}).

In this section, $G$ is a $p$-group and $\F$ an algebraically closed field of characteristic $p$. Modules of constant Jordan type were introduced in order to properly extend what was already known for $\F C_{p}$-modules (here $C_{p}$ stands for the cyclic group of order $p$). Indeed, in this case, a module $M$ is completely described by the action of a given generator $x_1$ of $C_{p}$, more precisely we have the following theorem:

\begin{thm}The $\F C_{p}$-modules of dimension $n$ (over $\F$) are in bijection with the partition of $n$ by parts of size no greater than $p$, up to isomorphism.\end{thm}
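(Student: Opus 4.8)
The plan is to reduce the statement to the classification of a single nilpotent endomorphism, for which Jordan normal form supplies both existence and uniqueness.

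First I would fix a generator $x$ of $C_{p}$ and identify the group algebra $\F C_{p}$ with the truncated polynomial ring $\F[t]/(t^{p})$ via $t = x-1$. Indeed $\F C_{p} \cong \F[x]/(x^{p}-1)$, and since $\F$ has characteristic $p$ we have $x^{p}-1 = (x-1)^{p} = t^{p}$ in this ring. Under this identification, giving an $n$-dimensional $\F C_{p}$-module $M$ is the same as giving an $n$-dimensional $\F$-vector space equipped with a nilpotent endomorphism $t$ satisfying $t^{p}=0$, that is, a nilpotent operator whose nilpotency index is at most $p$; the action of $x$ is then recovered as $1+t$.

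Next I would invoke the structure theorem for finitely generated modules over the principal ideal domain $\F[t]$ (equivalently, the classification of nilpotent matrices up to conjugacy by Jordan form). Viewed over $\F[t]$ with $t$ acting as above, $M$ decomposes as a direct sum $\bigoplus_{i} \F[t]/(t^{\lambda_{i}})$ of cyclic modules, each summand being a single Jordan block of size $\lambda_{i}$. The relation $t^{p}=0$ forces $\lambda_{i}\leq p$ for every $i$, while $\sum_{i}\lambda_{i} = \dim_{\F} M = n$. Thus to $M$ one associates the multiset $\{\lambda_{i}\}$, a partition of $n$ into parts no larger than $p$, and conversely any such partition is realised by the corresponding direct sum of Jordan blocks. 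The fact that this association is a genuine bijection on isomorphism classes — well-definedness and injectivity — is precisely the uniqueness clause of the structure theorem, since the elementary divisors $t^{\lambda_{i}}$, hence the partition, are invariants of $M$; no new argument is needed beyond that.

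I would expect the only step demanding care to be the opening reduction $x^{p}-1 = (x-1)^{p}$, where characteristic $p$ is essential: over a field of characteristic $0$ or $\ell\neq p$ the algebra $\F C_{p}$ would be semisimple, and the classification would collapse to the mere \emph{number} of summands rather than a full partition. Everything after that reduction is standard Jordan theory, so the content of the theorem lies entirely in recognizing $\F C_{p}$ as $\F[t]/(t^{p})$.
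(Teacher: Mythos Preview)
Your proof is correct and follows essentially the same approach as the paper: both arguments fix a generator, use the characteristic-$p$ identity $x^{p}-1=(x-1)^{p}$ to exhibit $x-1$ as a nilpotent operator of index at most $p$, and then invoke the Jordan classification of nilpotent endomorphisms to obtain the partition. You are somewhat more explicit about the bijection and the role of the structure theorem over $\F[t]$, but the underlying argument is the same.
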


\begin{proof}Let $M$ be an $\F C_{p}$-module. By a slight but classical abuse of notation, we will write $x_1$ for a generator of $C_{p}$, the associated element in $\F C_{p}$ and the associated endomorphism in $\End_{\F}(M)$.

Now, remember that, since $\F$ is of characteristic $p$, the beginner's dream is true in $\F C_{p}$:
\[x^{p}_1 - 1 = (x_1 -1)^{p} \, ; \]  
however $x_{1}^{p}$ is zero in $C_{p}$, so that $x_1-\operatorname{Id}$ is a nilpotent endomorphism; it is commonly known that such morphisms are classified by their Jordan type. The Jordan form of $x_1 - \operatorname{Id}$ gives a decomposition of $M$ into cyclic $\F C_{p}$-modules. Remark that the Jordan type of $x_1- Id$ does not depend upon the choice of $x_1$, so that this is well defined, and we can state that two $\F C_{p}$-modules having the same Jordan type are isomorphic. Hence the announced classification.

\end{proof}

\begin{rem}The Jordan type $[1]^{n_1}\ldots [p]^{n_p}$ of the morphism $x_1$ is called the Jordan type of the module $M$.\end{rem}

For such a module $M$, $n_{j}(M)$ denotes the number of its blocks of length $j$, where $j$ verifies the inequalities $1\leq j \leq p$. We can easily relate this block decomposition of a cyclic module to the dimension of the cohomology groups, by the following lemma.

\begin{lm}\label{CoJt}Let $M$ be an $\F E_1$-module, then
\[\left\lbrace\begin{array}{rcl}
\dim H^0 (E_1,M)&=& \sum\limits_{j=1}^{p} n_{j}(M)\, , \\
\dim H^1 (E_1,M)&=& \sum\limits_{j=1}^{p -1}n_{j}(M)\, .
\end{array}\right. \]  \end{lm}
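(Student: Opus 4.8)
The plan is to reduce to the indecomposable case and then to a pair of elementary linear-algebra counts. First I would invoke the classification from the previous theorem to write $M \cong \bigoplus_{j=1}^{p} J_j^{\,n_j(M)}$, where $J_j$ denotes the Jordan block of length $j$, i.e. the indecomposable $\F E_1$-module of dimension $j$ on which $t := x_1 - \operatorname{Id}$ acts as a single nilpotent chain. Since $E_1 = \langle x_1\rangle \cong C_p$ and $H^0(E_1,-)$ and $H^1(E_1,-)$ are additive on direct sums, it suffices to evaluate each on every $J_j$ and sum with multiplicities $n_j(M)$. Identifying $\F E_1 \cong \F[t]/(t^p)$ with $t = x_1 - \operatorname{Id}$, the block $J_j$ becomes $\F[t]/(t^j)$.

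For the $H^0$ formula: $H^0(E_1,J_j) = J_j^{E_1} = \ker\!\big(t\colon J_j \to J_j\big)$, which is one-dimensional (spanned by $t^{j-1}$) for every $j$ with $1\le j\le p$. Summing over the blocks gives $\dim H^0(E_1,M) = \sum_{j=1}^{p} n_j(M)$, as required.

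For the $H^1$ formula I would use the standard $2$-periodic free resolution of $\F$ over $\F C_p$, which identifies $H^1(E_1,J_j)$ with $\ker(N)/\Im(t)$, where $N = 1 + x_1 + \cdots + x_1^{p-1}$ is the norm element. The key simplification, valid because we work in characteristic $p$, is that $N = t^{p-1}$: indeed $(1+t)^p = 1 + t^p$ in $\F[t]/(t^p)$, whence $N = ((1+t)^p - 1)/t = t^{p-1}$. On $J_j = \F[t]/(t^j)$ the operator $t$ has image of dimension $j-1$; and $t^{p-1}$ annihilates all of $J_j$ when $1 \le j \le p-1$, while for $j = p$ its kernel has dimension $p-1$. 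Hence $\dim H^1(E_1,J_j) = 1$ for $1\le j \le p-1$ and $=0$ for $j=p$ — consistent with the fact that $J_p \cong \F E_1$ is free and so has vanishing positive (Tate) cohomology. Summing with multiplicities yields $\dim H^1(E_1,M) = \sum_{j=1}^{p-1} n_j(M)$.

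The only genuinely non-routine point is the $H^1$ computation, and within it the identity $N = t^{p-1}$ in $\F C_p$; once this is established, everything reduces to reading off the dimensions of kernels and images of powers of $t$ on the modules $\F[t]/(t^j)$, and I expect no serious obstacle beyond bookkeeping of these counts.
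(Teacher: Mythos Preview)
Your proof is correct. The paper does not actually prove this lemma---it explicitly leaves it to the reader---so there is no alternative argument to compare against; your approach via the block decomposition and the identity $N = t^{p-1}$ is exactly the standard and expected one.
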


The proof of this innocent lemma is left to the reader; despite its simplicity it will be our key argument in the proof of theorem \hyperlink{GT1}{A},  and in the treatment of one of our examples.

In order to adapt the previous method to non-cyclic groups, we will use the language of $\pi$-points. But first, we have to recall some basic facts: let $A_1$ and $A_2$ be two $\F$-algebras and let $\alpha\colon A_1\longrightarrow A_2$ be a morphism of $\F$-algebras. The morphism $\alpha$ induces a functor from the category of \emph{right} modules of finite type $\mathfrak{mod}(A_1)$ to the category of \emph{right} modules of finite type $\mathfrak{mod}(A_2)$. It is in fact known by all that every $A_2$-module $M$ can be turned into an $A_1$-module using this external law:
\[ x\cdot a_1=x\cdot\alpha(a_1)\, ,\quad \forall x\in M\, , \forall a_1\in A_1 \, ,\] 
and it is easily seen that this construction is functorial. From now on, $\alpha_{*}$ will denote the functor induced by such a morphism $\alpha$. It should be immediately remarked that this functor verifies multiple properties: it is for instance additive and exact: two small facts we shall make good use of. Keep in mind that we made the choice to study right modules and not the usual left modules: the reason for this rather unconventional choice will be put into light in Section \ref{P3}.

\begin{df}A $\pi$-point is a morphism of algebra
\[\alpha\colon\F[T]/(T^{p})\longrightarrow\F G \, , \] which is flat, that is: $\alpha_{*}(\F G)$ is a projective $\F[T]/(T^p)$-module. \end{df}

\begin{rems}Two remarks have to be made.

\paragraph{Module structure}It should be noticed that $\F [T] / (T^{p})$ is isomorphic to $\F E_1$, where $E_1$ is the $p$-elementary abelian group of rank 1. Let us choose a generator $\gamma$ of $E_1$ and set $\Gamma=\gamma-1\in\F E_1$, so that the isomorphism between $\F [T] / (T^p )$ and $\F E_1 $ is simply given by \[f\colon\Gamma\mapsto T \, .\]
Therefore, according to our previous discussion, every $\pi$-point $\alpha$ enables us to give a structure of $\F E_1$-module to every $\F G$-module.

\paragraph{Flatness condition}The flatness condition shall not be neglected: indeed, thanks to it, the $\F E_1$-module $\alpha_{*}(P)$ is projective if $P$ is a projective $\F G$-module. This simple fact has major consequences relative to the cohomology: $\alpha_{*}$ induces a morphism of cohomological functors
\[\res_{\alpha}\colon\hat{H}(G,-)\longrightarrow\hat{H}(E_1,-)\, . \]
Such map is written like a restriction, because it should be thought of as such: indeed, all expected properties of the restriction exposed in any textbook (for instance \cite[Part III]{Gui}) remain true.\end{rems}

\begin{df}Let $\alpha,\beta$ be two $\pi$-points.Then the $\pi$-points $\alpha$ and $\beta$ are said to be equivalent if for every module $M$, $\alpha_{*}(M)$ is projective if and only if $\beta_{*}(M)$ is projective. \end{df}

\begin{ex}
Let us consider more closely the case where $G$ is an elementary abelian $p$-group of rank $r$, which will be written $E_{r}$.

It is well-known that given a basis $x_1,\ldots,x_r$ of $E_{r}$ the elements denoted $X_{i}=x_{i}-1\in \F E_{r}$ form a basis of $\F E_{r}$ as an $\F$-algebra ($i$ is obviously between $1$ and $r$); more precisely, $\F E_{r}\simeq\F [X_1,\ldots,X_{r}]/(X_{i}^{p})$, where $(X_{i}^{p})$ denotes the ideal generated by the various $X_{i}^p$. It could be shown that in this case every $\pi$-point is of the form

\[\fonction{\alpha}{\F[T]/(T^{p})}{\F G}{[T]}{\gamma\in \Rad (\F E_{r})} \, . \]

Now, according to \cite[lemma 6.4]{CarlVCRM}, two $\pi$-points $\alpha,\beta$ on $\F E_r$ are equivalent if and only if the image of $\alpha-\beta$ lies in $I^2 (E_{r})$ -where $I(E_n)$ is the augmentation ideal. Thus a $\pi$-point -up to equivalence- is simply a morphism
\[\fonction{\alpha}{\F[T]/(T^{p})}{\F G}{[T]}{\sum\limits_{i=1}^{r}a_{i}X_{i}} \, . \]

\end{ex}

Now, it is time to introduce the modules of constant Jordan type.

\begin{df}An $\F G$-module $M$ is said to be of constant Jordan type $[a_1]^{m_1}\ldots [a_{l}]^{m_l}$, if the Jordan type of $\alpha_{*}(M)$ is $[a_1]^{m_1}\ldots [a_{l}]^{m_l}$ for every $\pi$-point $\alpha$. Its Jordan type is called the Jordan type of $M$. If we omit the block of length $p$, we speak of \emph{stable} Jordan type. \end{df}

\begin{rem}
If $M$ is an $\F' G$-module where $\F'$ is a field of characteristic $p$ which is \emph{not} algebraically closed, we say that $M$ is of constant Jordan type if $\bar{\F '}\otimes_{\F'} M$ is of constant Jordan type, where $\bar{\F'}$ is of course an algebraic closure of $\F'$.

\end{rem}

\begin{exs}\begin{enumerate}
\item
  Projective modules are a first example; indeed, $\F G$-projective modules are just direct sums of copies of $\F G$. Since a $\pi$-point $\alpha$ is flat, if $P$ is a projective $\F G$-module, then $\alpha_{*}(P)$ is a projective $\F E_1$-module: thus, it is a direct sum of copies of $\F E_1$ or, in other words, of blocks of size $p$. Hence a projective module is of constant Jordan type $[p]^{\frac{\dim P}{p}}$. Note in this case if $G$ is an elementary abelian group, the converse is true: it is the famous {\sc Dade}'s lemma. (see \cite[Lemma 1.9.5]{BensL})
  
\item \label{Ex2} Here is an example that foreshadows the proof of theorem \hyperlink{GT2}{B}. Let $I(G)^*$ be the dual of the augmentation ideal of $G$, and let us show directly that it is a module of constant Jordan type. Indeed, this module is defined by the short exact sequence:
\[\xymatrix{0 \ar[r] & \F \ar[r] & \F G \ar[r]^{\varepsilon^*} & I(G)^* \ar[r] & 0} \, , \] 
where $\varepsilon$ is the norm map. Let $\alpha$-be a $\pi$-point. Since $\alpha_*$ is an exact functor and sends projective modules to projective modules, by taking a look at the long exact sequence in cohomology, we have that
\[\dim H^1(E_1,\alpha_{*}(I(G)^*))=1 \, . \]
Using exactly the same trick, but one degree lower, and the well-known dimension of $I(G)^*$, we can compute the following dimension:
\[\dim H^0 (E_1,\alpha_{*}(I(G)^*))=\frac{|G|}{p} \, . \]
Therefore, according to the lemma \ref{CoJt}, we get \[n_{p}(\alpha_{*}(M))=\dim H^0 (E_1,\alpha_* (I(G)^*))-\dim H^1(E_1,\alpha_* (I(G)^*))=\frac{|G|}{p}-1\, .\] Furthermore, there is only one block whose size is not $p$. Because we have the following equality
\[\dim \alpha_{*}(I(G)^*)=\dim I(G)^* = |G|-1\, ,\] we deduce that the size of this block is $p-1$. Thus the module is of constant Jordan type $[p-1][p]^{\frac{|G|}{p}-1}$. We will later state a proposition (\ref{HelCst}), which will make obvious that $I(G)^*$ is of constant Jordan type.

 \end{enumerate}
 \end{exs}

\begin{cex} Consider the $\F E_3$-module given by generators and relations:

\[M=\langle r|r\cdot X_{1}^{p-1}X_{2}^{p-1}=0 \rangle\, ; \]
this module is not of constant Jordan type. Indeed, consider the following two $\pi$-points:
\[\alpha\colon\Gamma\mapsto X_1\, ,\quad\beta\colon\Gamma\mapsto X_3 \, . \]
It is easy to see that $\alpha_{*}(M)$ has block decomposition $[p-1]^{p}[p]^{p(p-1)}$, each block of size $p$ being generated by $r\cdot X_{2}^{j}X_{3}^{k}$ (with $j\neq p-1$ and $0\leq k \leq p-1$) and the blocks of size $p-1$ by the $r\cdot X_{2}^{p-1}X_{3}^{k}$, whereas $\beta_{*}(M)$ has block decomposition $[p]^{p^2 -1}$, all those blocks being generated by the $r\cdot X_{1}^{j}X_{2}^{k}$ where $i,j\in\{0,\ldots,p-1\}$ and $(j,k)$ is different from $(p-1,p-1)$.
\end{cex}

Further elementary examples of modules of constant Jordan type will be given later. The following proposition from \cite{JordanType} enables us to build more modules of constant Jordan type.

\begin{prop}\label{OpJt}The full subcategory $\mathfrak{ctJt}(\F G)$ of $\mathfrak{mod}(\F G)$ whose objects are modules of constant Jordan type is closed under direct sums, tensor products and taking the linear dual. \end{prop}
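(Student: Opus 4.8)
The plan is to verify the defining condition one operation at a time, for a fixed but arbitrary $\pi$-point $\alpha\colon\F[T]/(T^{p})\to\F G$, comparing the $\F E_1$-module $\alpha_*(-)$ attached to the constructed module with those attached to its constituents. The three cases have very different flavours, and I expect only the last to require real work.

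The direct sum is immediate. Since $\alpha_*$ is additive we have $\alpha_*(M\oplus N)\simeq\alpha_*(M)\oplus\alpha_*(N)$, and the operator $T$ acts block-diagonally, so the Jordan type of a direct sum of $\F E_1$-modules is simply the concatenation of the two Jordan types. Hence if the Jordan types of $\alpha_*(M)$ and $\alpha_*(N)$ are independent of $\alpha$, so is that of $\alpha_*(M\oplus N)$; this settles closure under $\oplus$.

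For the linear dual, the subtlety is that the $\F G$-module structure on $M^{*}$ is twisted by the antipode $S\colon\F G\to\F G$, $g\mapsto g^{-1}$. I would introduce $\alpha^{\vee}=S\circ\alpha\circ S'$, where $S'$ is the antipode of $\F E_1$; since the composite of an anti-homomorphism, a homomorphism and an anti-homomorphism of algebras is again a homomorphism, and since restriction along an (anti-)automorphism preserves projectivity, $\alpha^{\vee}$ is again a $\pi$-point. A direct comparison of the actions then yields a natural isomorphism of $\F E_1$-modules $\alpha_*(M^{*})\simeq(\alpha^{\vee}_*(M))^{*}$, the right-hand dual being taken over $\F E_1$. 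Finally, the $\F E_1$-dual preserves Jordan type: the transpose of a nilpotent operator has the same Jordan form, and each cyclic block $[j]$ is self-dual. Thus the Jordan type of $\alpha_*(M^{*})$ equals that of $\alpha^{\vee}_*(M)$, which is independent of $\alpha^{\vee}$, hence of $\alpha$, whenever $M$ has constant Jordan type.

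The genuinely substantial case, and the one I expect to be the main obstacle, is the tensor product. Here the diagonal $G$-action makes $\alpha(\gamma)$ act on $M\otimes N$ through the comultiplication $\Delta\colon\F G\to\F G\otimes\F G$, that is, as $\Delta(\alpha(\gamma))$. Were $\alpha$ a morphism of coalgebras we would have $\Delta(\alpha(\gamma))=\alpha(\gamma)\otimes\alpha(\gamma)$, so that $\alpha_*(M\otimes N)$ would be exactly $\alpha_*(M)\otimes\alpha_*(N)$ with its diagonal $\F E_1$-action, and we could conclude at once from the classical fact that the Jordan type of a tensor product of two $\F E_1$-modules is determined by the Jordan types of the factors. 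The difficulty is precisely that a general $\pi$-point is not a coalgebra map: writing $\alpha(\gamma)=1+\sum_i a_i X_i$ in the elementary abelian case, one checks that $\Delta(\alpha(\gamma))$ and $\alpha(\gamma)\otimes\alpha(\gamma)$ differ by terms in $\Rad\otimes\Rad$, and such higher-order terms can a priori alter the Jordan type. The way around this is the comparison result at the heart of \cite{JordanType}: one proves that the Jordan type of $\alpha_*(M\otimes N)$ depends only on the Jordan types of $\alpha_*(M)$ and $\alpha_*(N)$, for instance by replacing $\alpha$, for the purpose of computing Jordan type, by a one-parameter subgroup, which is a genuine morphism of group schemes and so respects the coproduct. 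Granting this reconciliation, constancy of the Jordan types of $\alpha_*(M)$ and $\alpha_*(N)$ forces constancy of that of $\alpha_*(M\otimes N)$, which completes the argument.
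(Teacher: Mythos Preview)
The paper does not actually prove this proposition: after remarking that the direct-sum case takes ``only a few lines'', it refers the reader to \cite[Prop.~1.8, Cor.~4.3]{JordanType} for the tensor product and the dual. So your proposal is not competing against a proof in the paper, but supplying one where the paper merely cites.

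Your treatment of the direct sum is exactly the ``few lines'' the paper alludes to, and is fine.

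Your argument for the dual is correct and in fact proves more than the proposition asserts: since $\alpha^\vee$ is again a $\pi$-point and the $\F E_1$-dual preserves Jordan type, you obtain that $M^*$ has \emph{the same} constant Jordan type as $M$, not merely some constant type. This is worth flagging, because it sits uneasily with the paper's subsequent warning that ``the Jordan type of $M$ is not necessarily the same as the one of $M^*$''. For finite groups with the paper's definition of $\pi$-point (a flat algebra map, with no group-scheme subtleties), your antipode argument goes through as written; the phenomena the paper is alluding to live in the finite group scheme setting of \cite{JordanType}, where the comparison $\alpha\leftrightarrow\alpha^\vee$ is more delicate. You might want to note this discrepancy explicitly rather than leave the reader to reconcile the two statements.

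For the tensor product, you have correctly located the obstruction --- $\alpha$ is not a coalgebra map, so $\alpha_*(M\otimes N)$ is not literally $\alpha_*(M)\otimes\alpha_*(N)$ --- and you name the remedy (pass to an equivalent $\pi$-point that \emph{is} a Hopf map, e.g.\ a one-parameter subgroup). But the sentence ``granting this reconciliation'' is doing all the work: what remains is precisely the content of \cite[Prop.~1.8]{JordanType}, namely that such a replacement is always available without changing Jordan types. As written, this part of your argument is a sketch that, like the paper, ultimately defers to \cite{JordanType}. If you want a self-contained proof you would need to establish that comparison result; otherwise it is honest to cite it, as the paper does.
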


Remember that if $M$ is an $\F G$-module, the linear dual $M^*$ of $M$ is the module which, as an $\F$-vector space, is isomorphic to $\hom_{\F}(M,\F_p)$ and whose module structure is simply given by the law
\[f\cdot g\colon x\mapsto f(x\cdot g^{-1})\, ,\quad\forall f\in M^*\, , \forall g\in G \, .\]
Within only a few lines, it can be proved that the direct sum of two modules of constant Jordan type is of constant Jordan type. It is less obvious that this class is stable under tensor product and taking the linear dual (for a proof, see \cite[Prop. 1.8, cor. 4.3]{JordanType}). The latest can even reserve some surprises: $M$ is of constant Jordan type if and only if $M^*$ is so, yet the Jordan type of $M$ is not necessarily the same as the one of $M^*$! For a counter-example, see for instance \cite[Example 1.13.1]{BensL}.

\subsection{Local fields}
\label{Local fields}
Now, it is time to introduce the extensions which are at the heart of this article. Let $\mathbf{k}$ be a local field which contains a primitive $p$-th root of unity and let us fix $\bar{\mathbf{k}}$ an algebraic closure of $\mathbf{k}$. We set $\K$ a Galois $p$-extension of finite type of $\k$ whose Galois group called $G$ verifies that its cohomology ring (with coefficients in $\F_p$) is Cohen-Macaulay. As recalled in the introduction, because of Duflot's theorem, if $G$ is an elementary abelian $p$-group which is not cyclic, this assumption holds.

In spite of their ingenuity, such groups play a major role in field theory.

The study of $J(\K) = \K^\times/\K^{\times p}$ will lead us to the study of some other extensions, as we shall see hereafter. Let us put $\mathcal{R} =\{ \alpha\in\bar{\mathbf{k}}|\alpha^{p}\in \K\}$ and $\mathbf{K}^{(2)}=\K[\mathcal{R}]$, so that we have the following diagram of extensions.

\[\xymatrix{\mathbf{K}^{(2)} \ar@{-}[d] \\ \K \ar@{-}[d] \\ \mathbf{k}} \]

According to the previous diagram we have the following exact sequence 
\[\xymatrix{0 \ar[r] & Gal(\mathbf{K}^{(2)}/\K) \ar[r] & Gal(\mathbf{K}^{(2)}/\mathbf{k}) \ar[r] & Gal(\K/\mathbf{k}) \ar[r] & 0}\, . \]

Therefore it is abundantly clear that $G:= Gal(\K/\k)$ acts on $Gal(\mathbf{K}^{(2)}/\K)$ by conjugation.  Furthermore, as an~$\F_p G$-module $Gal(\K ^{(2)}/\K)$ is related to~$J(\mathbf{K})$ via {\em Kummer theory.} Let us recall the basics.

Remember first that a field extension $\L/\L_0$ is an $n$-\emph{Kummer extension} if it is simply a Galois extension such that $Gal(\L/\L_0)$ is an abelian group of exponent dividing $n$, and if~$\L_0$ contains a primitive $n$-th root of unity. Like in Galois theory, there is a correspondence theorem. 

\label{KT}
\begin{thm}[Kummer theory]Let $\mathbf{L}_0$ be a field containing a primitive $n$-th root of unity and fix an algebraic closure $\bar{\mathbf{L}}_0$ of $\mathbf{L}_0$. The $n$-Kummer extensions $\mathbf{L}$ of $\mathbf{L}_0$ contained in $\bar{\mathbf{L}}_0$ are in 1-to-1 correspondence with the subgroups of $\mathbf{L}_{0}^{\times}/\mathbf{L}_{0}^{\times n}$; moreover the correspondence maps the subgroup~$H$ to the field $\mathbf{L}_{0}[x^{\frac{1}{n}},[x]\in H]$, and is thus order-preserving. Finally, if a field $\mathbf{K}$ and a subgroup $H$ are in correspondence, then $\hom(Gal(\mathbf{L}/\mathbf{L}_{0}),\Z/n\Z)\cong H$.

\end{thm}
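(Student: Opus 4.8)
The plan is to exhibit two mutually inverse maps and let a non-degenerate pairing carry the hard content. In one direction I send a subgroup $H$ (containing $\mathbf{L}_0^{\times n}$) to $\mathbf{L}_H := \mathbf{L}_0[x^{1/n} : [x]\in H]$; in the other, a Kummer extension $\mathbf{L}$ goes to $H_{\mathbf{L}} := (\mathbf{L}_0^\times \cap \mathbf{L}^{\times n})/\mathbf{L}_0^{\times n}$, the classes of elements of $\mathbf{L}_0$ that become $n$-th powers in $\mathbf{L}$. First I would check that $\mathbf{L}_H$ is well defined and genuinely Kummer: replacing a representative $x$ by $xb^n$ leaves the field unchanged because the primitive root $\zeta_n$ and $b$ already lie in $\mathbf{L}_0$; and $\mathbf{L}_H$ is the splitting field of the polynomials $X^n-x$, which are separable (the existence of $\zeta_n$ forces $\mathrm{char}\,\mathbf{L}_0 \nmid n$) and split completely once one root is adjoined, so $\mathbf{L}_H/\mathbf{L}_0$ is Galois.

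The heart of the argument is the Kummer pairing. For $\sigma \in Gal(\mathbf{L}_H/\mathbf{L}_0)$ and $[x]\in H$ set $\langle \sigma,[x]\rangle = \sigma(x^{1/n})/x^{1/n}$. Since $x\in\mathbf{L}_0$, this is an $n$-th root of unity, independent of the chosen root, and a short computation using $\mu_n\subseteq\mathbf{L}_0$ shows it is bimultiplicative. I would then prove non-degeneracy on both sides: if $\langle\sigma,-\rangle$ is trivial then $\sigma$ fixes every generator $x^{1/n}$, whence $\sigma=\mathrm{id}$, giving an injection $Gal(\mathbf{L}_H/\mathbf{L}_0)\hookrightarrow\hom(H,\mu_n)$; and if $\langle-,[x]\rangle$ is trivial then $x^{1/n}$ is $Gal$-fixed, hence lies in $\mathbf{L}_0$, so $[x]=0$, giving $H\hookrightarrow\hom(Gal(\mathbf{L}_H/\mathbf{L}_0),\mu_n)$. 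In particular $Gal(\mathbf{L}_H/\mathbf{L}_0)$ is abelian of exponent dividing $n$. For finite $H$ the two injections force $|Gal(\mathbf{L}_H/\mathbf{L}_0)|=|H|$ and the pairing becomes perfect, which is exactly what yields $H_{\mathbf{L}_H}=H$: an extra class $[c]$ with $c^{1/n}\in\mathbf{L}_H$ would enlarge $H$ without enlarging $\mathbf{L}_H$, contradicting the equality of orders. For infinite $H$ I would pass to direct limits, the pairing then being a perfect duality between a profinite group and a discrete torsion group.

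The reverse identity $\mathbf{L}_{H_{\mathbf{L}}}=\mathbf{L}$ is the other thing to establish, and it amounts to the statement that an abelian extension of exponent dividing $n$ is generated by radicals. I would reduce to the cyclic case by writing $Gal(\mathbf{L}/\mathbf{L}_0)$ as a product of cyclic groups, hence $\mathbf{L}$ as a compositum of cyclic subextensions, and invoke Hilbert's Theorem 90: a cyclic extension of degree $d\mid n$ is $\mathbf{L}_0[\alpha]$ with $\alpha^d\in\mathbf{L}_0$, via a Lagrange resolvent built from a generator of the Galois group and the root of unity $\zeta_d$. This places each piece, and hence $\mathbf{L}$, inside $\mathbf{L}_{H_{\mathbf{L}}}$, the reverse inclusion being obvious.

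Granting these two identities the correspondence is a bijection, and it is order-preserving since $H_1\subseteq H_2$ visibly gives $\mathbf{L}_{H_1}\subseteq\mathbf{L}_{H_2}$. Finally the perfect pairing gives $H\cong\hom(Gal(\mathbf{L}/\mathbf{L}_0),\mu_n)$, and since $\zeta_n\in\mathbf{L}_0$ furnishes an isomorphism $\mu_n\cong\Z/n\Z$ of trivial $Gal$-modules, we get $\hom(Gal(\mathbf{L}/\mathbf{L}_0),\Z/n\Z)\cong H$, which is the last assertion. The main obstacle is the perfectness of the Kummer pairing together with $\mathbf{L}_{H_{\mathbf{L}}}=\mathbf{L}$: both encode Hilbert 90 (equivalently the vanishing of $H^1(Gal,\bar{\mathbf{L}}_0^\times)$), and this cohomological input is the one genuinely non-formal ingredient, everything else being bookkeeping.
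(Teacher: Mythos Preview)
Your proof is correct and follows the standard route to Kummer theory: set up the Kummer pairing, establish non-degeneracy on both sides, and invoke Hilbert~90 (via Lagrange resolvents) to show that every abelian extension of exponent dividing~$n$ is generated by $n$-th roots. This is exactly the argument one finds in the textbook literature.

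The paper itself does not prove this statement at all: it is quoted as background and the reader is simply referred to \cite[Theorem 1.25]{Gui}. So there is no competing argument to compare against; your sketch supplies what the paper deliberately omits, and it does so along the same lines as the cited reference.
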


See~\cite[Theorem 1.25]{Gui}. From this theorem and the previous considerations we can deduce two key facts (with~$n=p$ in both cases).

First, the group $Gal(\mathbf{K}^{(2)}/\K)$ is simply an elementary abelian $p$-groups and in particular an $\F_{p}$-vector space. 

Secondly, the theorem applied to the base field~$\K$ implies, by maximality, that~$\K ^{(2)}$ is in correspondence with~$J(\mathbf{K})$ (note that all~$p$-Kummer extensions of~$\K$ are contained in~$\K ^{(2)}$). It follows that $\hom(Gal(\mathbf{K}^{(2)}/\K), \F_p)$ is isomorphic to $J(\mathbf{K})$, and this is really an isomorphism of $\F_p G$ \emph{modules}: indeed this is the refinement brought by equivariant Kummer theory (see \cite[Theorem 1.26]{Gui}).

A more elaborate result, which we call Tate duality (cf. \cite[Theorem 13.21]{Gui}), states that $J(\mathbf{K})$ is self-dual, as a module, as long as $\mathbf{k}$ contains a primitive $p$-th root of unity, which is fortunately the case here.

We summarize this discussion in the following lemma:

\begin{lm} There is an isomorphism of $\F_pG$-modules between $Gal(\mathbf{K}^{(2)}/\K)$, $J(\mathbf{K})$ and $J(\mathbf{K})^*$.\end{lm}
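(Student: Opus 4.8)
The plan is to assemble the three isomorphisms from the ingredients already laid out, since the lemma is essentially a bookkeeping statement that collects the results recalled immediately above it.

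First I would establish the isomorphism between $Gal(\mathbf{K}^{(2)}/\K)$ and $J(\mathbf{K})$. Equivariant Kummer theory (the refinement of the correspondence theorem cited as \cite[Theorem 1.26]{Gui}) gives an isomorphism of $\F_p G$-modules
\[
\hom(Gal(\mathbf{K}^{(2)}/\K), \F_p) \cong J(\mathbf{K}) \, ,
\]
obtained by applying the correspondence with base field $\K$ and $n=p$, and noting that $\K^{(2)}$ corresponds to the full group $J(\K)$ by maximality. But $Gal(\mathbf{K}^{(2)}/\K)$ is an elementary abelian $p$-group, hence a finite-dimensional $\F_p$-vector space, so it is abstractly isomorphic to its double dual; the content to check is that $Gal(\mathbf{K}^{(2)}/\K) \cong \hom(Gal(\mathbf{K}^{(2)}/\K),\F_p)^{*}$ \emph{as $\F_p G$-modules}, which follows because taking $\F_p$-linear duals with the contragredient $G$-action (the law $f\cdot g\colon x\mapsto f(x\cdot g^{-1})$ recalled earlier) is an involutive functor on finite-dimensional $\F_p G$-modules and the Kummer pairing is $G$-equivariant. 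Dualizing the displayed isomorphism therefore yields $Gal(\mathbf{K}^{(2)}/\K)\cong J(\mathbf{K})^{*}$, and combined with the first isomorphism we obtain $Gal(\mathbf{K}^{(2)}/\K)\cong J(\mathbf{K})$.

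It then remains to see that $J(\mathbf{K})\cong J(\mathbf{K})^{*}$, and this is exactly the self-duality statement called Tate duality, valid because $\k$ contains a primitive $p$-th root of unity (cited as \cite[Theorem 13.21]{Gui}). Chaining the three gives the isomorphisms
\[
Gal(\mathbf{K}^{(2)}/\K)\;\cong\;J(\mathbf{K})\;\cong\;J(\mathbf{K})^{*}
\]
of $\F_p G$-modules, which is the assertion.

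The only genuine subtlety—hence the step I would be most careful about—is the compatibility of the $G$-actions throughout, rather than the mere existence of $\F_p$-vector-space isomorphisms of the right dimensions. The conjugation action of $G=Gal(\K/\k)$ on $Gal(\mathbf{K}^{(2)}/\K)$ must be matched with the Galois action on $J(\K)$ under the Kummer pairing, and the duality functor must be the contragredient one defined in the excerpt; since the cited equivariant Kummer theory and Tate duality already package precisely these equivariant statements, the proof reduces to invoking them and verifying that dualizing an equivariant perfect pairing between elementary abelian $p$-groups is itself equivariant. No field-theoretic input beyond what has been recalled is needed.
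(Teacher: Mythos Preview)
Your proof is correct and follows exactly the same approach as the paper: the lemma is just a summary of the preceding discussion, combining equivariant Kummer theory ($\hom(Gal(\mathbf{K}^{(2)}/\K),\F_p)\cong J(\mathbf{K})$ as $\F_pG$-modules) with Tate self-duality of $J(\mathbf{K})$. One small expository slip: in your first paragraph the phrase ``combined with the first isomorphism we obtain $Gal(\mathbf{K}^{(2)}/\K)\cong J(\mathbf{K})$'' does not yet follow --- you only have $Gal(\mathbf{K}^{(2)}/\K)\cong J(\mathbf{K})^*$ at that point, and the missing link is precisely the Tate duality you invoke in the next paragraph --- but the final chain you write down is correct.
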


From now on, we set $J= J(\mathbf{K})$. According to the previous lemma and the proposition \ref{OpJt}, instead of trying to study $J$, we can turn our attention to its dual namely~$Gal(\K ^{(2)} / \K)$ and use techniques from group theory.

Let us write~$\L(p)$ for the largest pro-$p$ extension of the field~$\L$ (contained in a fixed algebraic closure), and put~$\mathcal{G}_\L(p) = Gal(\L(p)/\L)$. Let $\mathcal{H}$ the subgroup of $\mathcal{G}_{\k(p)}$ in Galois correspondence with $\K$. It is not hard to see that $\mathbf{K}^{(2)}$ is in correspondence with $\Phi(\mathcal{H})$, using the maximality condition defining these extension and the one defining the Frattini subgroup. We can therefore state the following lemma:

\begin{lm} \label{lem-J-iso-frattini} There is an isomorphism $J^* \cong \mathcal{H}/\Phi (\mathcal{H})$, as modules over~$\F_p G$ where $G = \mathcal{G}_{\mathbf{k}}(p)/\mathcal{H}$.\label{baba} \end{lm}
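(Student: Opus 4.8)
The plan is to establish the isomorphism $J^* \cong \mathcal{H}/\Phi(\mathcal{H})$ by combining the two ingredients just assembled: the Kummer-theoretic identification of the previous lemma, which tells us $J^* \cong Gal(\mathbf{K}^{(2)}/\K)$ as $\F_p G$-modules, and the group-theoretic identification $Gal(\mathbf{K}^{(2)}/\K) \cong \mathcal{H}/\Phi(\mathcal{H})$ coming from the Galois correspondence for the pro-$p$ extension $\k(p)/\k$. So the heart of the argument is purely a matter of pro-$p$ Galois theory: I must show that the subfield $\mathbf{K}^{(2)}$ corresponds, under the correspondence for $\mathcal{G}_\k(p)$, precisely to the closed subgroup $\Phi(\mathcal{H})$, and that the resulting isomorphism respects the $G$-action.

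First I would set up the Galois correspondence for the pro-$p$ group $\mathcal{G}_\k(p) = Gal(\k(p)/\k)$, noting that $\K \subseteq \k(p)$ since $\K/\k$ is a $p$-extension, so $\mathcal{H} = Gal(\k(p)/\K)$ is a well-defined closed subgroup; the quotient $G = \mathcal{G}_\k(p)/\mathcal{H}$ is then identified with $Gal(\K/\k)$, consistent with earlier notation. Next I would verify the claim $\mathbf{K}^{(2)} \subseteq \k(p)$, so that $\mathbf{K}^{(2)}$ also corresponds to a closed subgroup; this holds because $\mathbf{K}^{(2)}/\K$ is an elementary abelian $p$-extension (by Kummer theory, as recalled) and $\K/\k$ is a $p$-extension, whence $\mathbf{K}^{(2)}/\k$ is a $p$-extension and therefore sits inside the maximal one. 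The key step is then the identification of the subgroup corresponding to $\mathbf{K}^{(2)}$ with $\Phi(\mathcal{H}) = \mathcal{H}^p[\mathcal{H},\mathcal{H}]$. By definition $\mathbf{K}^{(2)} = \K[\mathcal{R}]$ is the maximal $p$-Kummer (hence maximal elementary abelian $p$-) extension of $\K$ inside $\bar{\k}$; equivalently it is the fixed field of the smallest closed subgroup $N \trianglelefteq \mathcal{H}$ for which $\mathcal{H}/N$ is elementary abelian. Since the Frattini subgroup $\Phi(\mathcal{H})$ is exactly characterized by the property that $\mathcal{H}/\Phi(\mathcal{H})$ is the largest elementary abelian quotient of $\mathcal{H}$, I conclude $N = \Phi(\mathcal{H})$, giving $Gal(\mathbf{K}^{(2)}/\K) \cong \mathcal{H}/\Phi(\mathcal{H})$ as abstract groups.

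Finally I would check that this is an isomorphism of $\F_p G$-modules, not merely of abelian groups. The $G$-action on both sides is by conjugation lifted through the quotient $\mathcal{G}_\k(p) \twoheadrightarrow G$: on $Gal(\mathbf{K}^{(2)}/\K)$ it is the usual conjugation action of $Gal(\K/\k)$ described in the diagram of extensions, and on $\mathcal{H}/\Phi(\mathcal{H})$ it is induced by conjugation of $\mathcal{G}_\k(p)$ on its normal subgroup $\mathcal{H}$ (note $\Phi(\mathcal{H})$ is characteristic in $\mathcal{H}$, hence normal in $\mathcal{G}_\k(p)$, so this action is well-defined). Because the abstract isomorphism is induced by the canonical projection $\mathcal{G}_\k(p) \to Gal(\mathbf{K}^{(2)}/\k)$ restricted appropriately, it intertwines the two conjugation actions by naturality, and both actions factor through $\mathcal{G}_\k(p)/\mathcal{H} = G$. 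Combining with the previous lemma yields the desired chain of $\F_p G$-isomorphisms. The main obstacle I anticipate is not any single deep computation but rather being scrupulous about the compatibility of the $G$-module structures: one must confirm that the Kummer-theoretic identification of $J^*$ with $Gal(\mathbf{K}^{(2)}/\K)$ and the Frattini identification both carry the \emph{same} underlying conjugation action of $G$, so that composing them is legitimate as a statement about modules.
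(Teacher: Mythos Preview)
Your proposal is correct and follows essentially the same route as the paper: combine the previous lemma's Kummer-theoretic isomorphism $J^*\cong Gal(\K^{(2)}/\K)$ with the Galois-correspondence identification of $\K^{(2)}$ with $\Phi(\mathcal{H})$, the latter obtained by matching the maximality property of $\K^{(2)}$ against the universal property of the Frattini quotient. You are somewhat more explicit than the paper about verifying $\K^{(2)}\subset\k(p)$ and about the compatibility of the conjugation actions, but these are exactly the details the paper leaves as ``not hard to see''.
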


\subsection{The main theorems} \label{subsec-main}

Thanks to the previous lemma, we have completely translated the problem arising from Galois theory into a group-theoretic one; not only does this formulation enable us to solve the problem, but we can now rephrase all major results of this article in the two following theorems. 

\begin{thm}[Main theorem I]\label{MT1} Let $\mathbf{k}$ be a local field and let $\mathcal{G}_{\mathbf{k}}(p)$ be the Galois group of a maximal pro-$p$-extension. Consider a closed normal subgroup of finite index $\mathcal{H}$ of $\mathcal{G}_{\k}(p)$ and put $J = \hom(\mathcal{H}/\Phi(\mathcal{H}),\F_p)$ and $G=\mathcal{G}_{\mathbf{k}}(p)/\mathcal{H}$. Remember that $d_{i}(G)=\dim_{\F_p}\hat{H}^{i}(G,\F_p)$.

We have the following possibilities for the cohomology of~$G$ with coefficients in $J$.

\begin{enumerate}
\item If $\mathbf{k}$ does not contain a primitive $p^{th}$ root of unity, then for all~$s \in \Z$:
\[\left\lbrace\begin{array}{rcl}
\hat{H}^{s}(G,J)&=&\hat{H}^{s+2}(G, \F_{p}) \\ H^0(G,J)&\simeq&\F_{p}^{d_{2}(G)+n-d_{1}(G)} \end{array}\right. \, . \] 
\item Suppose that $\xi_{p}\in\mathbf{k}$ and $H^{\bullet}(G,\F_p)$ is a Cohen-Macaulay ring, then we have to distinguish between two cases

\begin{enumerate}
\item if the inflation map $\inf\colon H^2(Gal(\K/\k),\F_p)\longrightarrow H^2(\mathcal{G}_{\k}(p),\F_p)$ is zero, the following isomorphisms hold:
\[\left\lbrace\begin{array}{rclr}
H^s (G,J(\K))&\simeq& \hat{H}^{s+2}(G,J(\K))\oplus H^{s-2}(G,J(\K))\, ,&s\geq 1 \\
H^0 (G,J(\K))&\simeq& \F_{p}^{d_{2}(G)+(n-d_{1}(G))}
\end{array}\right.
\, . \]
\item if the inflation map $\inf\colon H^2(Gal(\K/\k),\F_p)\longrightarrow H^2(\mathcal{G}_{\k}(p),\F_p)$ is non-zero then
\[\left\lbrace\begin{array}{rcll}H^{1}(G,J(\mathbf{K}))&\simeq &H^{3}(G,\F_{p})& \\
H^{s}(G , J(\mathbf{K}) ) &\simeq & H^{s+2}(G,\F_{p})\oplus H^{s-2}(G, \F_{p})\, ,&s\geq 2 \\
H^0(G,J(\K))&\simeq& \F_{p}^{d_{2}(G)-1+(n-d_{1}(G))}
\, .\end{array}\right. \]
\end{enumerate}
\end{enumerate}
\end{thm}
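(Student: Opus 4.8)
The plan is to push everything through the pro-$p$ group $\mathcal{G}=\mathcal{G}_{\k}(p)$ via Lemma \ref{baba}, so that $J=(\mathcal{H}/\Phi(\mathcal{H}))^{*}$, and then to exploit the two shapes of $\mathcal{G}$ dictated by local field theory: if $\xi_{p}\notin\k$ the group $\mathcal{G}$ is a free pro-$p$ group, whereas if $\xi_{p}\in\k$ it is a Demushkin group, that is a Poincar\'e duality group of dimension $2$ with $H^{2}(\mathcal{G},\F_p)\cong\F_p$. Throughout I would use the Heller-shift identity $\hat{H}^{s}(G,\Omega^{t}\F_p)\cong\hat{H}^{s-t}(G,\F_p)$ together with the fact (recorded in the discussion of \hyperlink{GT3}{D}) that the stable map $\kappa$ vanishes exactly when the inflation map does.

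First I would treat case (1). Since $\mathcal{H}$ is then itself free pro-$p$, the presentation $1\to\mathcal{H}\to\mathcal{G}\to G\to1$ with $\mathcal{G}$ free on $n=\dim_{\F_p}H^{1}(\mathcal{G},\F_p)$ generators (and this $n$ is exactly $\dim_{\F_p}\k^{\times}/\k^{\times p}$) yields the four-term relation sequence of $\F_pG$-modules
\[\xymatrix@C=1.4em{0\ar[r]&\mathcal{H}/\Phi(\mathcal{H})\ar[r]&\F_pG^{\,n}\ar[r]&\F_pG\ar[r]&\F_p\ar[r]&0}\,,\]
exhibiting $\mathcal{H}/\Phi(\mathcal{H})$ as a second syzygy of $\F_p$; hence $\mathcal{H}/\Phi(\mathcal{H})\cong\Omega^{2}\F_p$ stably and, dualizing, $J\cong\Omega^{-2}\F_p$, so $\hat{H}^{s}(G,J)\cong\hat{H}^{s+2}(G,\F_p)$ for all $s$. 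The invariants $H^{0}(G,J)=H^{1}(\mathcal{H},\F_p)^{G}$ I would read off the five-term inflation--restriction sequence
\[\xymatrix@C=1.1em{0\ar[r]&H^{1}(G,\F_p)\ar[r]&H^{1}(\mathcal{G},\F_p)\ar[r]&H^{1}(\mathcal{H},\F_p)^{G}\ar[r]&H^{2}(G,\F_p)\ar[r]^-{\inf}&H^{2}(\mathcal{G},\F_p)}\,,\]
which for $\mathcal{G}$ free ($H^{2}(\mathcal{G},\F_p)=0$) gives $\dim H^{0}(G,J)=n-d_{1}(G)+d_{2}(G)$.

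For case (2) I would feed the sequence of \hyperlink{GT3}{D}, namely $0\to\Omega^{-1}\F_p\xrightarrow{\kappa}\Omega^{2}\F_p\to J\to0$, into the long exact Tate cohomology sequence; after the shift identity it reads
\[\cdots\to\hat{H}^{s+1}(G,\F_p)\xrightarrow{\kappa_{*}}\hat{H}^{s-2}(G,\F_p)\to\hat{H}^{s}(G,J)\to\hat{H}^{s+2}(G,\F_p)\xrightarrow{\kappa_{*}}\hat{H}^{s-1}(G,\F_p)\to\cdots\,,\]
where $\kappa_{*}$ is multiplication by the class $\kappa\in\hat{H}^{-3}(G,\F_p)$. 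In sub-case (a), where $\inf=0$, the class $\kappa$ vanishes, so $\kappa_{*}=0$ and the triangle splits as $J\cong\Omega^{2}\F_p\oplus\Omega^{-2}\F_p$ stably, giving at once $\hat{H}^{s}(G,J)\cong\hat{H}^{s+2}(G,\F_p)\oplus\hat{H}^{s-2}(G,\F_p)$; the five-term sequence (again with $\inf=0$) supplies $\dim H^{0}(G,J)=n-d_{1}(G)+d_{2}(G)$.

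Sub-case (b) is the heart of the matter, and the hard part. The pivotal instance $\kappa_{*}\colon\hat{H}^{2}(G,\F_p)\to\hat{H}^{-1}(G,\F_p)\cong\F_p$ is the Tate-duality pairing against $\kappa$, equivalently the scalar form of $\inf$, hence surjective when $\inf\neq0$; this is what collapses $H^{1}(G,J)$ onto $H^{3}(G,\F_p)$. The obstacle I anticipate is proving that $\kappa_{*}$ vanishes in every \emph{other} degree, i.e.\ that multiplication by the negative-degree class $\kappa$ is concentrated in degree $2$; granting this, the long exact sequence degenerates into the announced isomorphisms $H^{1}(G,J)\cong H^{3}(G,\F_p)$ and $H^{s}(G,J)\cong H^{s+2}(G,\F_p)\oplus H^{s-2}(G,\F_p)$ for $s\ge2$, the latter splitting forced by the self-duality of $J$. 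I expect this concentration to be precisely where the Cohen-Macaulay hypothesis is indispensable: it endows $\hat{H}^{\bullet}(G,\F_p)$ with a free-module structure over a polynomial system of parameters (Duflot, Benson--Carlson duality), and it is against this structure that $\kappa$ can be shown to annihilate all cohomology outside degree $2$. Finally the count $\dim H^{0}(G,J)=n-d_{1}(G)+d_{2}(G)-1$ drops out of the five-term sequence, the extra $-1$ being the rank of $\inf$, which equals $\dim H^{2}(\mathcal{G},\F_p)=1$ for a Demushkin group.
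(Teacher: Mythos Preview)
Your outline matches the paper's proof essentially step for step: the free case via Proposition~\ref{GB} and the five-term sequence, the Demu\v{s}kin case via the short exact sequence of Proposition~\hyperlink{GT3}{D}, the splitting when $\kappa$ is stably zero, and in the non-zero case the combination of Tate duality (to handle the degree-$1$ instance of $\kappa_{*}$) with the Cohen--Macaulay hypothesis (to kill $\kappa_{*}$ elsewhere). The paper makes your anticipated ``concentration'' step precise by invoking Benson's results on products in negative Tate cohomology \cite[Thm.~3.1, Lemma~2.1]{BensProd}: under the CM hypothesis, the cup product by a negative-degree class such as $[\kappa]\in\hat H^{-3}$ vanishes on $\hat H^{l}$ once $l$ lies outside a bounded window, which is exactly what breaks the long exact sequence into the announced short pieces. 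One small remark: the splitting $H^{s}(G,J)\cong H^{s+2}\oplus H^{s-2}$ for $s\ge 2$ does not need the self-duality of $J$; once both flanking instances of $\kappa_{*}$ vanish you have a short exact sequence of $\F_{p}$-vector spaces, and those always split.
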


\begin{thm}[Main theorem II]\label{MT2}Under the same hypothesis over $\k$ and using the same notation, the $G$-module $J^*$ 
\begin{itemize}
\item is of constant Jordan type, and its stable Jordan type is $[1]$, if $\mathbf{k}$ does not contain a primitive $p^{th}$-root of unity,
\item is of constant Jordan type, and its stable Jordan type is $[1]^{2}$ , if $\mathbf{k}$ does contain a primitive $p^{th}$-root of unity and $G$ is an elementary abelian $p$-group.
\end{itemize}

\end{thm}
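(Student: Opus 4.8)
The plan is to deduce everything from the short exact sequence of Proposition~\hyperlink{GT3}{D}, working in the stable category. Since $J\cong J^{*}$ as $G$-modules (Tate duality), and in any case constant Jordan type is preserved under linear duals (Proposition~\ref{OpJt}), we may argue with $J$ itself. Proposition~\hyperlink{GT3}{D} exhibits $J$ as the cofibre, in the stable category, of the map $\kappa\colon\omega_{-1}(\F_p)\to\omega_{2}(\F_p)$, that is of a stable map $\Omega^{-1}(\F_p)\to\Omega^{2}(\F_p)$, equivalently a class $\kappa\in\hat{H}^{-3}(G,\F_p)=H^2(G,\F_p)^{*}$. When $\k$ has no primitive $p$-th root of unity the sequence degenerates and $J$ is stably isomorphic to $\Omega^{-2}(\F_p)$ (this is exactly what the formula $\hat H^{s}(G,J)=\hat H^{s+2}(G,\F_p)$ of Main Theorem~\ref{MT1}(1) records); since the Heller translates of $\F_p$ are automatically of constant Jordan type (Proposition~\ref{HelCst}) and $\Omega^{-2}_{E_1}(\F_p)\cong\F_p$ stably, we immediately obtain constant Jordan type with stable type $[1]$.

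It remains to treat the case $\xi_{p}\in\k$ with $G$ elementary abelian and non-cyclic, so that the notion is non-trivial. Fix an arbitrary $\pi$-point $\alpha$ and apply the exact functor $\alpha_{*}$, which preserves projectives and hence descends to the stable category and commutes with $\Omega$, to the sequence of Proposition~\hyperlink{GT3}{D}. Because $\alpha_{*}\F_p\cong\F_p$, we have $\alpha_{*}\omega_{s}(\F_p)\cong\Omega^{s}_{E_1}(\F_p)$ stably, so $\alpha_{*}J$ is the cofibre over $E_1$ of $\alpha_{*}\kappa=\res_{\alpha}(\kappa)\in\hat{H}^{-3}(E_1,\F_p)\cong\F_p$. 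Over $E_1=C_p$ the two possibilities are easy to list, writing $M_{j}$ for the Jordan block of size $j$: if $\res_{\alpha}(\kappa)=0$ the cofibre of the zero map is $\Omega^{2}_{E_1}(\F_p)\oplus\Omega^{-2}_{E_1}(\F_p)\cong\F_p\oplus\F_p$, of stable type $[1]^2$; if $\res_{\alpha}(\kappa)\neq 0$ then $\alpha_{*}\kappa$ is a nonzero, non-invertible map $\Omega^{-1}_{E_1}(\F_p)\to\F_p$ whose cofibre computes to $\Omega^{-1}_{E_1}(M_{p-2})\cong M_{2}$, of stable type $[2]$. Thus both the constancy and the asserted value $[1]^2$ are equivalent to the vanishing of $\res_{\alpha}(\kappa)$ for \emph{every} $\pi$-point $\alpha$.

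The heart of the proof is therefore this vanishing. Since $\res_{\alpha}$ is a homomorphism of graded rings compatible with the Tate-duality pairing $\hat H^{2}\otimes\hat H^{-3}\to\hat H^{-1}\cong\F_p$, the value $\res_{\alpha}(\kappa)$ is detected by pairing $\kappa$ against the classes $\res_{\alpha}(\eta)$ for $\eta\in H^2(G,\F_p)$. For $G$ elementary abelian and $p$ odd, $H^2(G,\F_p)$ splits as the Bocksteins $\langle y_i\rangle$ and the cup products $\langle x_i x_j\rangle$; restriction to a cyclic shifted subgroup annihilates every $x_ix_j$ (as $x^2=0$ in $H^{*}(E_1,\F_p)$) and carries the $y_i$ into the one-dimensional space $H^2(E_1,\F_p)$. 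Hence $\res_{\alpha}(\kappa)=0$ for all $\alpha$ precisely when $\kappa$ kills the Bockstein subspace of $H^2(G,\F_p)$. The remaining, genuinely arithmetic, step is to establish that $\kappa$ does annihilate the Bocksteins: this is where the local field hypothesis enters, through the explicit identification of $\kappa$ with (the dual of) the cup-product pairing attached to the \dem relation, in the spirit of Jannsen's class $\chi$. I expect this to be the main obstacle, and to rely on the explicit description of $\kappa$ on the standard basis of $H^2(G,\F_p)$ announced in the introduction. Granting it, $\alpha_{*}J\cong\F_p\oplus\F_p$ stably for every $\pi$-point, so $J$ is of constant Jordan type with stable type $[1]^2$.

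Finally, I would stress the slightly counter-intuitive point this argument reveals: the conclusion holds even when the inflation map is non-zero, that is when $\kappa\neq 0$ globally. The module $J$ is then not stably a sum of Heller translates, yet $\kappa$ restricts to $0$ on every cyclic shifted subgroup, which is exactly what constant Jordan type demands.
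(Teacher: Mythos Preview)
Your reduction is exactly the paper's: both arguments bring the second bullet down to showing $\res_\alpha(\kappa)=0$ for every $\pi$-point $\alpha$, and your dichotomy for the cofibre over $E_1$ (stable type $[1]^2$ if $\res_\alpha(\kappa)=0$, type $[2]$ otherwise) is correct. The divergence, and the genuine gap, is in how you propose to establish that vanishing.

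You argue that $\res_\alpha(\kappa)$ is detected by pairing against $\res_\alpha\bigl(H^2(G,\F_p)\bigr)$, that restriction kills the products $\eta_i\smile\eta_j$, and hence that $\res_\alpha(\kappa)=0$ for all $\alpha$ precisely when $\kappa^\#$ annihilates the Bockstein classes $\zeta_i$; you then defer this last ``arithmetic'' claim to the explicit description of $\kappa$. But that claim is \emph{false}: the paper's own computation for the maximal $p$-Kummer extension (Section~\ref{P3}) gives $\kappa^\#(\zeta_1)=1$ whenever $\xi_{p^2}\notin\k$. Since that is a perfectly good instance of the theorem (with $G=E_n$) and $J^*$ nonetheless has stable type $[1]^2$, your proposed equivalence cannot hold. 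The underlying issue is the compatibility you invoke between $\res_\alpha$ and the Tate pairing: writing $\langle\res_\alpha(\eta),\res_\alpha(\kappa)\rangle_{E_1}=\res_\alpha\bigl(\langle\eta,\kappa\rangle_G\bigr)$, the right-hand side lies in the image of $\res_\alpha\colon\hat H^{-1}(G,\F_p)\to\hat H^{-1}(E_1,\F_p)$, and that map is itself zero once $\operatorname{rank}G\ge 2$. So the pairing collapses to $0=0$ and detects nothing about $\kappa^\#$. (Separately, your $x^2=0$ step is specific to $p$ odd, whereas the statement must also cover $p=2$.)

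The paper closes the gap with a single general fact, due to Benson \cite[Proposition~8.12.1]{BensL}: for an elementary abelian $p$-group of rank at least $2$, \emph{every} class of negative Tate degree restricts to zero along \emph{every} $\pi$-point. Applied to $\kappa\in\hat H^{-3}(G,\F_p)$ this gives $\res_\alpha(\kappa)=0$ at once, with no reference to the particular shape of $\kappa$ and no arithmetic input. From there the paper finishes just as you would: the long exact sequence for $\alpha_*$ of \eqref{SE} breaks into short pieces, one reads off $\dim H^0(E_1,\alpha_*J^*)$ and $\dim H^1(E_1,\alpha_*J^*)$, and Lemma~\ref{CoJt} together with the known dimension $\dim J^*$ forces the stable type $[1]^2$.
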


Note that in the case where $\mathbf{k}$ does not contain a primitive $p^{th}$-root of unity, there is no such thing as Kummer theory; therefore there is no isomorphism between $\K^{\times}/\K^{\times p}$ and $\mathcal{H}/\Phi(\mathcal{H})$, so that this formulation of the theorem is the only one available. It should be remarked that $J$ reflects some differences between those fields and their arithmetic.

The next section of the paper is devoted to the proof of those theorems (which implies, in particular, the statements of Theorems \hyperlink{GT1}{A} and \hyperlink{GT2}{B} from the introduction, of course). Before we turn to this however, we need to continue with more background material.

\subsection{\dem groups} \label{subsec-demushkin}

The Galois groups of maximal $p$-extensions of local fields are explicitly known: indeed if $\mathbf{L}$ is a local field such that $\xi_{p}\in\mathbf{L}$, then $\mathcal{G}_{\mathbf{L}}(p)$ is a \emph{Demu\v{s}kin group}. A presentation by generators and relations of such groups was given by {\sc J. Labute} (see \cite{Lab}), which we recall first for $p\neq 2$:
\[\mathcal{D}_{k,2s}=\langle x_{1},\ldots,x_{2s}|x_{1}^{p^{k}}(x_1,x_2)(x_3,x_4)\ldots (x_{2s-1},x_{2s})=1 \rangle \, , \]
where $k$ is the maximal integer such that $\xi_{p^{k}}\in\mathbf{L}$ and $2s$ is the dimension of $J(\mathbf{L})$.

When $p=2$, the relation in the Demu\v{s}kin group changes. If the number of generators is odd, it becomes
\[\mathcal{D}_{f,n=2s+1}=\langle x_1,\ldots,x_{2s+1}|x_{1}^{2}x_{2}^{f}(x_2,x_3)(x_4,x_5)\ldots (x_{2s},x_{2s+1})=1 \rangle \, . \]
However, if the number of generators is even, the relation is either
\[\mathcal{D}_{f,n=2s}=\langle x_1,\ldots,x_{2s}|x_{1}^{2+2^{f}}(x_1,x_2)(x_3,x_4)\ldots (x_{2s-1},x_{2s})=1\rangle \, , \]
or
\[\mathcal{D}'_{f,n=2s}=\langle x_1,\ldots,x_{2s}|x_{1}^{2}(x_1,x_2)x_{3}^{2^{f}}(x_3,x_4)\ldots (x_{2s-1},x_{2s})=1\rangle \, . \]

In each case $f$ is an integer such that $f\geq 2$.

We complete this review of the possible descriptions for~$\mathcal{G}_\L(p)$ with the case when~$\L$ does not contain a primitive~$p^{th}$-root of unity: in this situation $\mathcal{G}_\L(p)$ is just a free prop-$p$-group (\cite[Theorem 3, II, \S5]{CoGal}).

\subsection{The stable module category and Heller shifts} \label{subsec-heller-basics}

We have to introduce some new modules: our key argument is yet very simple (it is just a short exact sequence), but we have to explain some classical notation and objects. Here we just follow \cite[\S 2.5 sq.]{CoRi}, so we consider a finite group $G$ and a field~$\F$ (whose characteristic~$p$ typically divides the order of~$G$).

Let $M$ be an $\F G$-module, let $\pi:P\longrightarrow M$ an epimorphism from a projective module onto $M$. Its kernel denoted $\Omega(M)$ is called the \emph{Heller shift} of $M$; it always exists, however it is only defined up to a projective summand. That is why we have to introduce the \emph{stable} module category $\underline{\mathfrak{mod}}(\F G)$ whose objects are but $\F G$-modules, and whose hom sets, written $\sthom$, are defined by 
\[ \underline{\hom}(M,N)=\hom_{\F G}(M,N)/P_{M,N}\, , \]
where $P_{M,N}$ is the subspace of morphisms which factor through a projective. Then $\Omega$ becomes a well-defined functor on the stable category.

We should immediately remark that we can iterate $\Omega$ and set without ambiguity $\Omega^{i+1} (M)=\Omega(\Omega^{i}(M))$ and so on. Dualizing this construction (i.e. taking the cokernel of a monomorphism from $M$ into a projective module) gives birth to $\Omega^{-1}(M)$ and then we can again iterate such a construction. We would like to emphasize the fact that $\Omega(M)$ is not well-defined in the category of modules but in the stable category; usually  $\omega(M)$ will be our notation for \emph{some} module whose image in $\underline{\mathfrak{mod}}(\F G)$ is isomorphic to $\Omega(M)$, though we will repeat this for emphasis.

Furthermore the previous construction is natural: $\Omega$ is an endo-functor of $\underline{\mathfrak{mod}}(\F G)$ and an equivalence of category whose quasi inverse is, as expected, $\Omega^{-1}$. We may hope that it adds a little bit of structure to $\underline{\mathfrak{mod}}(\F G)$. In fact $(\underline{\mathfrak{mod}}(\F G),\Omega^{-1})$ is a triangulated category: given a short exact sequence in $\mathfrak{mod}(\F G)$
\[\xymatrix{0 \ar[r] & L \ar[r]^{\alpha} & M \ar[r]^{\beta} & N \ar[r] & 0}\, , \]
it is possible to build maps $(\tilde{\alpha},\tilde{\beta},\gamma)$ in the stable module category in such a way the class of $\alpha$ (resp.$\beta$) is $\tilde{\alpha}$ (resp. $\tilde{\beta}$) and $\gamma\colon N\longrightarrow\Omega^{-1}(L)$, so that the distinguished triangles are all the triangles isomorphic to one of the form:

\begin{equation}\xymatrix{L \ar[r]^{\tilde{\alpha}} & M \ar[r]^{\tilde{\beta}} & N \ar[r]^{\gamma} & \Omega^{-1}(L) }\, . \label{Trg} 
\end{equation} We summarize in the following proposition :

\begin{prop}The additive category $\underline{\mathfrak{mod}}(\F G)$, equipped with the functor $\Omega^{-1}$ and whose distinguished triangles are the ones described above, is a triangulated category. \end{prop}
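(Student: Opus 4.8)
The plan is to recognise $\mathfrak{mod}(\F G)$ as a \emph{Frobenius category} and then to invoke the general theorem (going back to Heller, in the form given by Happel) that the stable category of such a category carries a canonical triangulated structure, the suspension being the inverse syzygy operator. Recall that a Frobenius category is an exact category with enough projective and enough injective objects, in which the two classes coincide; its stable category, obtained by quotienting morphisms by those factoring through a projective-injective object, is then automatically triangulated. So the work splits into two parts: exhibiting the Frobenius structure, and matching the resulting triangulation with the triangles described in the statement.

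First I would equip $\mathfrak{mod}(\F G)$ with its canonical exact structure, in which the admissible sequences are all short exact sequences of modules. The key arithmetic input is that $\F G$ is a \emph{self-injective} (indeed symmetric) algebra: this is the classical fact that the group algebra of a finite group over a field is a Frobenius algebra. Consequently a finitely generated $\F G$-module is projective if and only if it is injective; moreover the category has enough projectives and enough injectives, since every module is a quotient of a free module and, dually, embeds into an injective (equivalently projective) module. This is exactly the content of being a Frobenius category, and the subspace $P_{M,N}$ appearing in the definition of $\sthom$ is precisely the ideal of morphisms factoring through a projective-injective object; hence $\underline{\mathfrak{mod}}(\F G)$ is its stable category in the technical sense.

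Second I would check that the triangulated structure furnished by this machinery is the one advertised. The suspension is the cosyzygy functor $\Omega^{-1}$, an autoequivalence with quasi-inverse $\Omega$. For the distinguished triangles: given $f\colon L\to M$, one embeds $L$ into an injective (= projective) module $I$ by a monomorphism $\iota$, forms the monomorphism $\binom{f}{\iota}\colon L\to M\oplus I$, and takes its cokernel $C$, producing a short exact sequence $0\to L\to M\oplus I\to C\to 0$. Since $I$ is stably zero one has $M\oplus I\cong M$ in $\underline{\mathfrak{mod}}(\F G)$, so this realises the cone of $f$ and exhibits a triangle $L\to M\to C\to\Omega^{-1}(L)$; conversely every short exact sequence of modules yields such a triangle. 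Tracking that the connecting map genuinely lands in $\Omega^{-1}(L)$ and is natural recovers exactly the triangles of the form~\eqref{Trg}, after which axioms (TR1)--(TR3) follow fairly mechanically from the pushout and pullback constructions above.

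The main obstacle is the octahedral axiom (TR4). Whereas rotation and the completion of morphisms of triangles reduce to routine diagram chases with the cone construction, the octahedron requires organising three composable maps together with their cones into a single coherent diagram in the stable category, and this is where the bulk of the verification lies. Rather than reproduce this computation, I would carry out the reduction to the Frobenius structure in the first step and then cite the detailed check in \cite[\S 2.5 sq.]{CoRi}; the whole point of the Frobenius formalism is precisely that (TR4) is a formal consequence of it, uniform across all self-injective algebras.
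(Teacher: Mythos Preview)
Your proposal is correct and follows the standard route. Note, however, that the paper does not actually prove this proposition: it is stated as background material, with the section opening ``Here we just follow \cite[\S 2.5 sq.]{CoRi}'' serving as the reference. Your sketch --- recognising $\F G$ as self-injective, hence $\mathfrak{mod}(\F G)$ as Frobenius, and invoking Happel's theorem --- is precisely the argument one finds in that reference, and you even cite it at the end; so there is nothing to compare.
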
 

\begin{rems}\begin{enumerate}
\item
One of the main interests of $\Omega$ is the fact that it may give a new definition of Tate cohomology, namely 
\[\hat{H}^{k}(G,M)\simeq\sthom(\Omega^{s+k}(\F_p),\Omega^{s}(M))\, , \quad\forall s,\,  k\in\Z\, . \] \label{Stable}
\item Let us consider the following exact sequence in $\mathfrak{mod}(\F G)$ :
\[ \xymatrix{0 \ar[r] & L \ar[r]^{\alpha} & M \ar[r]^{\beta} & N \ar[r] & 0}\, .\]
Since the cone of a map in a triangulated category is unique up to isomorphism, the module $N$ is stably isomorphic to the cone of $\alpha$.
\end{enumerate}
\end{rems}

We will consider in particular the stable modules $\Omega^{-1}(\F)$ and $\Omega^2( \F)$ when~$G$ is a $p$-group, indeed, when $\mathcal{G}_{\k}(p)$ is a \dem group, the crucial statement will be the existence of a short exact sequence of modules

  \[\xymatrix{0 \ar[r] & \omega_{-1}(\mathbf{F}_{p}) \ar[r] & \omega_2(\F_p) \ar[r] & J^* \ar[r] & 0 } \, , \tag{*}\]
  which will enable us to compute the cohomology groups $H^{i}(E_1,\alpha_{*}(J^*))$ for every $\pi$-point $\alpha$. The precise description of those modules will be detailed hereafter (see \ref{subsec-ses}.) Indeed it is worth noting that $\alpha_{*}\colon\mathfrak{mod}(\F_p G)\longrightarrow\mathfrak{mod}(\F_p E_1)$ induces a functor of triangulated categories between $\underline{\mathfrak{mod}}(\F_p G)$ and $\underline{\mathfrak{mod}}(\F_p E_1)$, since it is flat.
  
It is also noteworthy that the category of constant Jordan type modules is stable under Heller shifts: to be more precise, we can state the following theorem:

\begin{thm}A module $M$ is of constant Jordan type, if and only if any module which is stably isomorphic to $\Omega(M)$ is of constant Jordan type. \label{HelCst}
\end{thm}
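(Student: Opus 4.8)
The plan is to show that the property of being of constant Jordan type is invariant under stable isomorphism, and then to track the effect of $\Omega$ on Jordan types one $\pi$-point at a time. First I would record that constant Jordan type depends only on the stable isomorphism class. Indeed, if $\alpha$ is any $\pi$-point and $P$ is projective, then $\alpha_{*}(P)$ is projective by flatness, hence a sum of blocks of size $p$; since $\alpha_{*}$ is additive, the stable Jordan type of $\alpha_{*}(M\oplus P)$ coincides with that of $\alpha_{*}(M)$. As $\dim\alpha_{*}(M)=\dim M$ is fixed, constancy of the stable Jordan type is equivalent to constancy of the full Jordan type, the number of $[p]$-blocks being determined by the dimension. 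Consequently $M$ is of constant Jordan type if and only if some (equivalently, any) module stably isomorphic to it is, so that the quantifier ``any module stably isomorphic to $\Omega(M)$'' is harmless, and it suffices to prove the equivalence ``$M$ of constant Jordan type $\iff$ $\Omega(M)$ of constant Jordan type'' for a single chosen representative $\Omega(M)$.

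The key point — and the only nontrivial step — is the commutation relation: for every $\pi$-point $\alpha$ there is a stable isomorphism $\alpha_{*}(\Omega(M))\cong\Omega(\alpha_{*}(M))$, where the right-hand $\Omega$ is taken over $\F E_1$. To obtain it, apply the exact functor $\alpha_{*}$ to a short exact sequence $0\to\Omega(M)\to P\to M\to 0$ with $P$ projective; the image $0\to\alpha_{*}(\Omega(M))\to\alpha_{*}(P)\to\alpha_{*}(M)\to 0$ stays exact, and $\alpha_{*}(P)$ is again projective, so its kernel $\alpha_{*}(\Omega(M))$ is a Heller shift of $\alpha_{*}(M)$, well-defined up to a projective summand. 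This is exactly the statement that $\alpha_{*}$ descends to a functor of triangulated categories commuting with $\Omega^{-1}$, already recorded in the text; everything hinges on flatness, which is what guarantees that $\alpha_{*}$ preserves projectivity.

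Finally I would compute $\Omega$ over $\F E_1\cong\F[T]/(T^{p})$ explicitly. Writing $[j]$ for the block $\F[T]/(T^{j})$ with $1\leq j\leq p$, the projective cover $\F E_1=[p]\twoheadrightarrow[j]$ has kernel $T^{j}\F E_1\cong[p-j]$, so $\Omega([j])=[p-j]$; thus $\Omega$ kills the projective block $[p]$ and induces the involution $[j]\mapsto[p-j]$ on stable Jordan types for $1\leq j\leq p-1$. In particular the stable Jordan type of $\Omega(N)$ is obtained from that of $N$ by a fixed bijection, for any $\F E_1$-module $N$. Combining the three steps: the stable Jordan type of $\alpha_{*}(M)$ is independent of $\alpha$ if and only if the stable Jordan type of $\Omega(\alpha_{*}(M))\cong\alpha_{*}(\Omega(M))$ is independent of $\alpha$, since the two are related by that fixed bijection. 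By the first paragraph these are precisely the assertions that $M$ and $\Omega(M)$ are of constant Jordan type, which yields both directions of the equivalence at once.
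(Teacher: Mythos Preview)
Your proof is correct. Note, however, that the paper does not actually prove this theorem: immediately after the statement it simply writes ``A proof can be found---as for everything dealing with modules of constant Jordan type---in \cite{JordanType}.'' So there is no in-paper argument to compare against; you have supplied what the paper outsources.

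For the record, your argument is essentially the standard one (and the one underlying the cited reference): reduce to the stable category by observing that projective summands contribute only $[p]$-blocks after restriction along any $\pi$-point, use exactness and flatness of $\alpha_{*}$ to get $\alpha_{*}(\Omega(M))\cong\Omega(\alpha_{*}(M))$ stably, and then compute $\Omega$ over $\F E_1$ as the involution $[j]\mapsto[p-j]$ on indecomposables. The paper itself invokes the middle step later (``Since a $\pi$-point induces a morphism of triangulated category, $\alpha_{*}(\Omega(M))=\Omega(\alpha_{*}(M))$''), so your reasoning is fully in line with the ambient machinery. One small remark: in your first paragraph you say constancy of the stable Jordan type is equivalent to constancy of the full Jordan type because the number of $[p]$-blocks is fixed by the dimension---this is fine for a fixed module $M$, but when passing to a stably isomorphic $M'$ the dimension may change, so it is only the \emph{property} of having constant Jordan type (not the type itself) that is stable-invariant. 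You use it in exactly that sense, so no harm done.
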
 

A proof can be found -as for everything dealing with modules of constant Jordan type- in \cite{JordanType}. Since the trivial module $\F_p$ is obviously of constant Jordan type, the modules $\omega_2 (\F_p)$ and $\omega_{-1}(\F_p)$ which appear in the previous exact sequence are also of constant Jordan type: even better, we know their Jordan type, such as stated in the following remark.

\begin{rem} Every module which is stably isomorphic to $\omega_2(\F_p)$ is of constant Jordan type and its stable Jordan type is $[1]$ (see \cite[Theorem 5.6]{JordanType}). \label{r1} \end{rem} 

For what concerns the module $\omega_{-1}(\F_p)$, it is already known to the reader: as it shall be proved later, it is $I(G)^*$ whose stable constant Jordan type is $[p-1]$ (see \ref{Ex2}).

As pointed out earlier, for an object $M$ in $\underline{\mathfrak{mod}}(\F G)$, there are many modules in $\mathfrak{mod}(\F G)$ whose equivalence class in the stable category is isomorphic to $\Omega(M)$. Yet, there exists a module without any projective summand in it and which is stably isomorphic to $\Omega(M)$. By a very slight abuse of notation, We denote such a (\emph{usual}) module verifying \emph{both} conditions $\Omega(M)$: indeed asking the absence of projective summand ensures the unicity of a representative (see\cite[p.14]{CarModAlg}). To be precise, in this article if we speak about the module (and not the stable module) $\Omega^1 (\F_p)$ we refer to the augmentation ideal of $G$ written $I(G)$ (and $\Omega^{-1}(\F_p)$ its dual), moreover $\Omega^2 (\F_p)$ will be the kernel of the application
\[\fonctionv{(\F_p G)^{|G|}=\langle e_{g},g\in G \rangle}{I(G)}{e_{g}}{X_{g}=g-1} \, .\]
We will encounter this module later (see \ref{P3}).

\subsection{A first half of the theorem}

Now we can prove half of the theorems, which is in fact a simple rephrasing of a a well-known theorem, so well-known among the specialists, that it is hard to know who should take credit for it. A reader interested by this case in the history of a theorem could refer to one article from {\sc Magnus} (\cite{Mag}), a note on this article by {\sc Blackburn} (\cite{Bl}), a letter in {\sc Serre}'s correspondence and an article of {\sc Gaschütz} (\cite{Gas}).

\begin{prop}Let $\mathcal{F}_n$ be the free pro-$p$-group of rank $n$ and $\tilde{\mathcal{H}}$ be a closed subgroup of $\mathcal{F}_n$. Let $G=\mathcal{F}_n/\tilde{\mathcal{H}}$, then $\tilde{\mathcal{H}}/\Phi(\tilde{\mathcal{H}})$ is stably isomorphic as a $G$-module to $\Omega^2(\F_p)$. \label{GB} \end{prop}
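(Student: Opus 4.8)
The plan is to produce a short exact sequence relating $\tilde{\mathcal{H}}/\Phi(\tilde{\mathcal{H}})$ to a free resolution of the trivial module, and then read off the stable class $\Omega^2(\F_p)$. First I would fix the minimal presentation viewpoint: write $\mathcal{F}_n$ as the free pro-$p$-group on $n$ generators, so that $G = \mathcal{F}_n/\tilde{\mathcal{H}}$ has $\tilde{\mathcal{H}}$ as its relation subgroup. The module $\tilde{\mathcal{H}}/\Phi(\tilde{\mathcal{H}})$ is then the \emph{relation module} of the presentation, an object whose connection with cohomology is entirely classical. The key algebraic input is the beginning of the standard free resolution of $\F_p$ over $\F_p G$, namely
\[
\xymatrix{(\F_p G)^{r} \ar[r] & (\F_p G)^{n} \ar[r] & \F_p G \ar[r]^(0.6){\varepsilon} & \F_p \ar[r] & 0}\, ,
\]
where the rank-$n$ term records the generators, $\varepsilon$ is the augmentation, and the kernel of the middle map is (stably) $\Omega^2(\F_p)$ by the very definition recalled in Subsection \ref{subsec-heller-basics}.

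The central step is to identify $\tilde{\mathcal{H}}/\Phi(\tilde{\mathcal{H}})$ with this kernel. I would use the Fox-derivative / Magnus embedding: the free differential calculus gives a $G$-equivariant map $\tilde{\mathcal{H}}/\Phi(\tilde{\mathcal{H}}) \longrightarrow (\F_p G)^{n}$ sending a relation $w$ to the tuple of its Fox derivatives reduced modulo $p$ and modulo the ideal, and this map is injective precisely because $\mathcal{F}_n$ is free (the Magnus/Gaschütz input cited before the statement). Concretely I would establish the exactness of
\[
\xymatrix{0 \ar[r] & \tilde{\mathcal{H}}/\Phi(\tilde{\mathcal{H}}) \ar[r] & (\F_p G)^{n} \ar[r]^(0.62){\partial} & I(G) \ar[r] & 0}\, ,
\]
where $\partial$ sends the $i$-th basis vector to $X_{x_i} = \overline{x_i}-1$, exactly the map $e_g \mapsto X_g$ appearing in the definition of $\Omega^2(\F_p)$ in the excerpt (restricted to a generating set). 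Surjectivity onto $I(G)$ holds because the $X_{x_i}$ generate the augmentation ideal of a $p$-group generated by the $x_i$; freeness of $\mathcal{F}_n$ forces the kernel to be exactly the relation module, with no relations among relations beyond those the free calculus predicts.

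From that exact sequence the conclusion is immediate: since $I(G)$ is (a representative of) $\Omega^1(\F_p)$ and the middle term is free, the kernel is stably $\Omega(I(G)) = \Omega^2(\F_p)$, which is the claim. The part I expect to be the main obstacle is verifying exactness of the middle map cleanly in the pro-$p$, profinite-completion setting: one must check that passing from the discrete free group to its pro-$p$ completion, and then modding out by $\Phi(\tilde{\mathcal{H}}) = \tilde{\mathcal{H}}^p(\tilde{\mathcal{H}},\tilde{\mathcal{H}})$, is compatible with the Fox-derivative description, and that the map $\partial$ is well-defined and injective on $\tilde{\mathcal{H}}/\Phi(\tilde{\mathcal{H}})$ rather than merely on $\tilde{\mathcal{H}}^{\mathrm{ab}}$. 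This is where I would lean on the Magnus--Gaschütz circle of results quoted just before the proposition, invoking freeness of $\mathcal{F}_n$ to guarantee that the relation module is projective-free-resolved in exactly two steps and that no higher syzygies interfere.
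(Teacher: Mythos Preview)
Your proposal is correct and matches the approach the paper points to: the paper does not give its own proof of this proposition but instead cites a conceptual argument in \cite{Fri} and, more to the point, the Fox-derivative argument in \cite{Bl}, noting that one need only replace ``derived subgroup'' by ``Frattini subgroup'' and work pro-$p$ throughout. Your outline is exactly this Blackburn/Fox-derivative route --- produce the exact sequence $0 \to \tilde{\mathcal{H}}/\Phi(\tilde{\mathcal{H}}) \to (\F_p G)^n \to I(G) \to 0$ via free differential calculus and read off $\Omega^2(\F_p)$ --- and the paper even alludes to this same sequence later (in the remark following Corollary~\ref{Pres}) as an alternative direct verification in the special case $\tilde{\mathcal{H}}=\Phi(\mathcal{F}_n)$.
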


A conceptual proof of this fact can be found in \cite{Fri} and a more down-to-earth using Fox derivatives is implicitly present in \cite{Bl}. Here, the implicitly means that the reader has just to add "pro-$p$" every where it makes sense and read "Frattini subgroup" instead of "derived subgroup".

This proves the main theorem when $\mathcal{G}_{\k}(p)$ is a free pro-$p$-group. Indeed, by definition

\[\hat{H}^{s}(G,\Omega^2 (\F_{p}))\simeq\hat{H}^{s-2}(G,\F_p)\, ; \]
moreover such module is of constant Jordan type and its stable Jordan type is $[1]$, as pointed out in the remark above (in \ref{r1}). Only one piece of information is missing: the dimension of the fixed points of $(\tilde{\mathcal{H}}/\Phi(\tilde{\mathcal{H}}))^*$ under the action of $G$. Let us take a closer look at the five term exact sequence (\cite[Cor. 2.4.2]{NSW}):
\[\xymatrix{0\ar[r] & H^1 (G,\F_p) \ar[r] & H^1(\mathcal{F}_n,\F_p) \ar[r] & H^1 (\mathcal{H},\F_p)^{G} \ar[r] & H^2 (G,\F_p) \ar[r] & H^2 (\mathcal{F}_n , \F_p) }\, . \]
It is well known that $\hat{H}^2 (\mathcal{F}_n ,\F_p)=0$ (\cite{CoGal}) and that $H^1 (G,\F_p)$. Now, let us inspect $H^1 (\mathcal{H},\F_p)^G$, we have indeed
\[\begin{array}{rclr}
H^1 (\tilde{\mathcal{H}},\F_p)&\simeq&(\hom(\tilde{\mathcal{H}},\F_p))^{G}&(\F_p\text{ is a trivial module}) \\
 &\simeq& \hom(\tilde{\mathcal{H}}/\Phi(\tilde{\mathcal{H}}),\F_p)^{G} &\text{(by the property of the Frattini subgroup)}
\end{array} \]
This litany of isomorphisms will be often used in this paper. Therefore we can state the following equality
\[\dim H^0 (G,(\mathcal{H}/\Phi(\mathcal{H}))^{*})=d_{2}(G)+(n-d_{1}(G))\, . \]

We then need this basic lemma:
\begin{lm} Let~$G$ be a~$p$-group, and let $M$ be an $\F_p G$-module. Now set
\[n=\dim H^0 (G,M)-\dim \hat{H}^0(G,M) \, ,  \] then $M=M_1\oplus(\F_p G)^{n}$ where $M_1$ is an $\F_p G$-module without any projective summand. \label{Ev}\end{lm}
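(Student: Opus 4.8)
The plan is to prove Lemma~\ref{Ev} by exploiting the self-injectivity of the group algebra~$\F_p G$, which allows us to split off projective summands explicitly. Since~$G$ is a~$p$-group, the algebra~$\F_p G$ is local with unique (up to isomorphism) indecomposable projective module, namely the free module~$\F_p G$ itself, and~$\F_p G$ is self-injective (indeed a Frobenius algebra), so projective and injective modules coincide. The strategy is to show that the integer~$n$ defined in the statement counts \emph{exactly} the multiplicity of the free module~$\F_p G$ as a direct summand of~$M$, and then to peel off that projective part.

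First I would decompose~$M$ using Krull--Schmidt: write~$M = M_1 \oplus (\F_p G)^{m}$ where~$M_1$ has no projective (equivalently, no free, equivalently no injective) summand, for some integer~$m \geq 0$; the task is then to prove~$m = n$. The key computation is to evaluate the two cohomological quantities on each type of summand. On the one hand, the Tate cohomology of a projective module vanishes in all degrees, so~$\hat{H}^0(G, (\F_p G)^m) = 0$; on the other hand, the \emph{ordinary} invariants of the free module are one-dimensional per copy, since~$H^0(G, \F_p G) = (\F_p G)^G$ is spanned by the norm element~$\sum_{g \in G} g$, giving~$\dim H^0(G, (\F_p G)^m) = m$. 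Because both~$H^0$ and~$\hat{H}^0$ are additive on direct sums, combining these with the contributions from~$M_1$ yields
\[
\dim H^0(G, M) - \dim \hat{H}^0(G, M) = \bigl(\dim H^0(G, M_1) - \dim \hat{H}^0(G, M_1)\bigr) + m \, .
\]
It therefore remains to show that the bracketed term vanishes, i.e.\ that~$\dim H^0(G, M_1) = \dim \hat{H}^0(G, M_1)$ when~$M_1$ has no projective summand.

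The crux is exactly this last point, and it is where I expect the main obstacle to lie. Recall that~$\hat{H}^0(G, M_1) = H^0(G, M_1)/\No(M_1)$, where~$\No$ denotes the image of the norm map~$\No\colon M_1 \to M_1^{G}$ sending~$x \mapsto \sum_{g} g \cdot x$. So the bracketed term equals~$\dim \No(M_1)$, and I must argue that the norm map is identically zero on any module without a projective summand. The cleanest way to see this is to invoke the fact that, for a Frobenius (self-injective) algebra, a nonzero norm element detects a free summand: if~$\No(x) \neq 0$ for some~$x \in M_1$, then the submodule generated by~$x$ contains a copy of the socle in a way that splits off a free direct summand~$\F_p G \hookrightarrow M_1$ (concretely, the map~$\F_p G \to M_1$, $1 \mapsto x$, is a split injection whenever~$\No(x) \neq 0$, because its image meets the socle~$\F_p \cdot \No(x)$ nontrivially and~$\F_p G$ is injective). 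This contradicts the assumption that~$M_1$ has no projective summand. Hence~$\No(M_1) = 0$, the bracketed term is zero, and~$m = n$ as desired.

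An alternative, perhaps more self-contained, route would be to phrase everything through the stable category: since~$\hat{H}^0(G, M) = \sthom(\F_p, M)$ (by Remark~\ref{Stable} with~$s = k = 0$ after an appropriate shift) measures maps in~$\underline{\mathfrak{mod}}(\F_p G)$, one identifies the quotient~$H^0/\hat{H}^0$ with the space of invariant maps factoring through a projective, which is controlled precisely by the free multiplicity. I would state the norm-detects-freeness fact as a standard property of Frobenius algebras, citing a source such as~\cite{CarModAlg} or~\cite{CoRi} rather than reproving it, so that the argument stays short. The only genuine care needed is to confirm that ``no projective summand'' and ``zero norm map'' are equivalent in this setting, which is exactly the Frobenius-algebra input above; everything else is bookkeeping with the additivity of~$H^0$ and~$\hat{H}^0$.
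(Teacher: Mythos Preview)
Your proposal is correct and follows essentially the same approach as the paper: decompose $M = M_1 \oplus (\F_p G)^m$ with $M_1$ projective-free, reduce to showing $\dim H^0(G,M_1) = \dim \hat H^0(G,M_1)$, and argue that a nonzero norm element would force a free summand of $M_1$. The paper cites \cite[Lemma 1.31]{Gui} for that last step, whereas you spell it out via the self-injectivity of $\F_p G$; the logical structure is identical.
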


\begin{proof}
Let us decompose $M$ as
\[M=M_1\oplus  P\, , \]
where $P$ is a projective module and $M_1$ has no projective summand. In fact, we may suppose that $P=(\F_p G)^{m}$, since projective $\F_p G$-modules are free. We would like to show that $m=n$. To this end, let us remark
\[\hat{H}^0(G,M)=\hat{H}^0 (G,M_1)\, . \]
Since $\dim\hat{H}^0 (G,M_1)+m=\dim H^0 (G,M)$, it remains to prove that
\[\dim H^0 (G,M_1)=\dim \hat{H}^0 (G,M_1)\, . \]
If this were not the case, there would be an element $x\in M_1$ such that $x\cdot\No\neq 0$ (where $\No$ is the norm) so that there would be a projective summand in $M_1$ (see \cite[Lemma 1.31]{Gui}, which is absurd.
\end{proof}

Now, we can properly compute $H^0 (G,\mathcal{H}/\Phi(H))$. Indeed, since $\mathcal{H}/\Phi(\mathcal{H})$ is stably isomorphic to $\Omega^{2}(\F_p)$, its dual is stably isomorphic to $\Omega^{-2}(\F_p)$, hence
\[\dim H^{0}(G,(\mathcal{H}/\Phi(\mathcal{H}))^*)-\dim\hat{H}^0 (G,(\mathcal{H}/\Phi(\mathcal{H}))^*)=n-d_{1}(G) \, ,\]
where $d_{i}(G)=\dim_{\F_{p}}\hat{H}^{i}(G,\F_p)\, .$
 According to the previous lemma
\[(\mathcal{H}/\Phi(\mathcal{H}))^{*}=\Omega^{-2}(\F_p) \oplus(\F_p G)^{n-d_1 (G)}\, , \]
where $\Omega^{-2}(\F_p)$ is a module without any projective summand and stably isomorphic to $\Omega^{-2}(\F_p)$, hence its dual is isomorphic to $\Omega^{2}(\F_p)$. Since there is no projective summand

\[\dim H^0 (G,\Omega^{-2}(\F_p)^{*})=\dim \hat{H}^0 (G,\Omega^{-2}(\F_p)^{*})=\dim \hat{H}^{-2}(G,\F_p)=d_{1}(G)\, . \]

Now, we can easily conclude:
\[\dim H^0 (G,\mathcal{H}/\Phi(\mathcal{H}))=n-d_{1}(G)+\dim H^0 (G,\Omega^{-2}(\F_p)^*) \]

Hence we obtain the expected result:

\[\dim H^0 (G,\mathcal{H}/\Phi(\mathcal{H}))=n\, . \]

Alternatively, we could have used \cite[Satz 2]{Gas}.

It now remains to confront the case where $\mathcal{G}_{\k}(p)$ is a \dem group.

\section{Proof of the main theorems}\label{P2}

In this section, as in the previous one, $p$ is a prime number, $\k$ is a local field and $\mathcal{G}_{\k}(p)$ is the Galois group of a maximal pro-$p$-extension of $\k$. From now on, we assume that $\mathcal{G}_{\k}(p)$ is a \dem group. In order to simplify our proofs, we will assume below that $p\neq 2$, unless we explicitly write otherwise. All results in this section do still remain true when $p=2$, with some slight changes which we will indicate along the text. Let us fix then $n$ and $k$ such that
\[\mathcal{G}_{\k}(p)=\mathcal{D}_{k,n}\, .  \]
We treat this as an equality rather than an isomorphism, which is tantamount to choosing generators for the group once and for all.

Our objective is the proof of Theorem \ref{MT1} and Theorem \ref{MT2}, in the case when $\xi_{p}\in\mathbf{k}$, so that the above group-theoretical hypotheses are in force. As previously mentioned, our key argument is a short exact sequence: we shall prove it first, and then draw the consequences from it. No mention of Galois theory will be made, since we have already translated the problem of studying $J(\K)=\K^{\times}/\K^{\times p}$ into a problem of group theory, for $\K/\k$ a finite $p$-Galois extension whose Galois group is $G$.

\subsection{The short exact sequence...} \label{subsec-ses}

As previously recalled, the \dem group $\mathcal{D}_{k,n}$ is but a quotient of the free pro-$p$-group on $n$ generators $x_1, \ldots, x_n$, denoted here $\mathcal{F}_{n}$, by the normal subgroup generated by $x_{1}^{p^k}(x_1,x_2)\ldots (x_{n-1},x_{n})$. This element will be denoted $\delta$ and we set
\[\pi\colon\mathcal{F}_{n}\longrightarrow\mathcal{D}_{k,n}\, , \] 
the canonical projection. If $p=2$, then $\delta$ should be changed, and we have to distinguish multiple cases (see \ref{subsec-demushkin}), however what follows remains true without any change.

Now, let us construct the epimorphism in the short exact sequence appearing in proposition \hyperlink{GT3}{D}, which means a map from a module $\omega_2 (\F_{p})$ (stably isomorphic to $\Omega^{2}(\F_p)$) onto $J^*$. Remember that $J^{*}=\mathcal{H}/\Phi(\mathcal{H})$ for a closed, normal subgroup $\mathcal{H}$ of $\mathcal{D}_{k,n}$. So we set
\[\tilde{\mathcal{H}}=\pi^{-1}(\mathcal{H}) \, . \]Note immediately that $\ker\pi\subset \tilde{\mathcal{H}}$, since $\ker\pi$ is the normal subgroup of $\mathcal{F}_{n}$ generated by $\delta$. Hence, there exists an $\F_p$-linear map from $\tilde{\mathcal{H}}/\Phi(\tilde{\mathcal{H}})$ onto $\mathcal{H}/\Phi(\mathcal{H})$. Moreover, this map is a morphism of $G$-modules.

Indeed, first, note that $\mathcal{F}_{n}/\Phi(\tilde{\mathcal{H}})$ is isomorphic to $G$: since $\ker\pi$ is equal to $\operatorname{Gr}(\delta)$ and $\delta$ lies in $\tilde{\mathcal{H}}$, we have that $\pi\colon\mathcal{F}_{n}\longrightarrow\mathcal{D}_{k,n}$ induces an epimorphism from $\mathcal{F}_{n}/\tilde{\mathcal{H}}$ onto $\mathcal{D}_{k,n}/\mathcal{H}$, and by definition of $\tilde{\mathcal{H}}$ it is obviously a monomorphism, hence it is an isomorphism of groups.

Furthermore, the induced map, which is written $\pi_{\mathcal{H}}$, is $G$-equivariant, since the action on those modules is but the action by conjugation. Thus $\pi_{\mathcal{H}}$ is an epimorphism of modules from $\tilde{\mathcal{H}}/\Phi(\tilde{\mathcal{H}})$ which is stably isomorphic to $\Omega^2 (\F_p)$, according to Proposition \ref{GB}, onto $J^*$. Since we have fixed an extension $\K/\k$, we have fixed the subgroup $\mathcal{H}$ (and consequently $\tilde{\mathcal{H}}$). Therefore from now on, when we speak of \emph{the} module $\omega_2 (\F_p)$ we mean $\tilde{\mathcal{H}}/\Phi(\tilde{\mathcal{H}})$ as an $\mathcal{F}_{n}/\tilde{\mathcal{H}}$-module, unless we say explicitly so. In order to prove the short exact sequence, it remains to study the kernel of $\pi_{\mathcal{H}}$: it is done in the following lemma.

\begin{lm}There exists a unique cyclic  $\F_p G$-module of dimension $|G|-1$ (up to isomorphism). It is stably isomorphic to $\Omega^{-1}(\F_p)$, and in fact isomorphic to \[ \langle r|r\cdot\No=0 \rangle\, ,\]where $\No$ denotes the norm
\[\No:=\sum\limits_{g\in G}g\, . \]
\end{lm}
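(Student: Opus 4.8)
The plan is to reduce both the existence/uniqueness claim and the two identifications to a single computation: the description of the socle of the regular module $\F_p G$. A cyclic $\F_p G$-module of dimension $|G|-1$ is, by definition, a quotient $\F_p G/L$ where $L$ is a \emph{right} ideal with $\dim_{\F_p}L=1$. So I would first observe that the whole statement reduces to classifying the one-dimensional right ideals of $\F_p G$: exhibiting exactly one such ideal yields existence and uniqueness simultaneously.

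\textbf{The norm line is the unique one-dimensional ideal.} A one-dimensional right ideal is a simple submodule of $\F_p G$, hence is contained in the socle $\Soc(\F_p G)=\{a\in\F_p G\mid a\cdot I(G)=0\}$, since for a $p$-group the radical $\Rad(\F_p G)$ coincides with the augmentation ideal $I(G)$, which is generated by the $X_h=h-1$. I would then compute this annihilator directly: writing $a=\sum_{g}c_g\,g$, the condition $a\cdot(h-1)=0$ for all $h\in G$ says $ah=a$ for every $h$, and comparing coefficients forces all the $c_g$ to be equal, so $a\in\F_p\No$. Conversely $\No\cdot(h-1)=\No-\No=0$, so $\Soc(\F_p G)=\F_p\No$ is one-dimensional. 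Consequently $L=\F_p\No$ is the only one-dimensional right ideal, and $\F_p G/\F_p\No$ is the unique cyclic module of dimension $|G|-1$.

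\textbf{The presentation and the stable class.} The module $\langle r\mid r\cdot\No=0\rangle$ is by definition $\F_p G/(\No\cdot\F_p G)$. Because $\No$ is central and $\No\,a=\varepsilon(a)\No$ for every $a\in\F_p G$ (with $\varepsilon$ the augmentation), the right ideal $\No\cdot\F_p G$ collapses to $\F_p\No$; hence the presentation is exactly the quotient $\F_p G/\F_p\No$ found above. For the stable class, the inclusion $\F_p\hookrightarrow\F_p G$ sending $1\mapsto\No$ is a morphism of modules — the target sits in the trivial submodule since $\No g=\No$ — with cokernel $\F_p G/\F_p\No$. As the cokernel of a monomorphism from $\F_p$ into a projective module, this is, by the very definition of the Heller shift recalled in Section \ref{subsec-heller-basics}, a module stably isomorphic to $\Omega^{-1}(\F_p)$; and since its dimension $|G|-1$ is strictly less than $|G|$ it has no free (hence no projective) summand, so it is legitimately \emph{the} module $\Omega^{-1}(\F_p)$ in our convention.

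The only substantive point is the socle computation; once $\Soc(\F_p G)=\F_p\No$ is established everything else is bookkeeping. The one subtlety to watch, given that we work throughout with \emph{right} modules, is that $\No$ is central, so that the left/right distinctions genuinely collapse and the single relation $r\No=0$ does cut out a one-dimensional ideal rather than a larger one.
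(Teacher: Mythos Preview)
Your proof is correct and follows essentially the same route as the paper's: both arguments identify a cyclic module of dimension $|G|-1$ with a quotient of $\F_p G$ by a one-dimensional submodule, observe that the only such submodule is $\F_p\No$, and read off the identification with $\Omega^{-1}(\F_p)$ from the resulting short exact sequence $0\to\F_p\to\F_p G\to M\to 0$. Your version is simply more explicit about the socle computation (the paper just asserts that the monomorphisms $\F_p\to\F_p G$ are the $1\mapsto c\No$), and you add the remark that no projective summand can occur for dimension reasons, which the paper leaves implicit.
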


\begin{proof}
Let us translate one by one the hypothesis of those lemma: suppose $M$ is a module verifying the conditions of the lemma; since $M$ is cyclic, there exists an epimorphism $\F_{p} G\longrightarrow M$. Because of the dimension of $M$, its kernel is of dimension 1, it is of course $\F_{p}$, both as a vector space and as a module. Therefore the following sequence is exact
\[\xymatrix{0 \ar[r] & \F_p \ar[r] & \F_p G \ar[r] & M \ar[r] & 0}\, , \]
which, by definition, means \[M\simeq\Omega^{-1}(\F_p)\, .\]
Now, note that the monomorphisms from $\F_p$ into $\F_p G$, are just the $f_{c}\colon1\mapsto c\No$, where $c\in\F_p$ and $\No$ is the norm. Hence the lemma.
\end{proof}

This module, which is simply $I(G)^*$, will be denoted by $\omega_{-1}(\F_p)$ in the rest of the paper.

Now, it is obvious that the kernel of the map from $\tilde{\mathcal{H}}/\Phi(\tilde{\mathcal{H}})$ onto $\mathcal{H}/\Phi(\mathcal{H})$ is monogenous: it is indeed generated by $[\delta]$, the class of $\delta$ modulo $\Phi(\tilde{\mathcal{H}})$. We then have to compute the dimension of $J^*$.

For every finitely generated pro-$p$-group $\mathcal{U}$, $d_1 (\mathcal{U})$ denotes the minimal number of topological generators of $\mathcal{U}$, or equivalently the dimension of $H^1 (\mathcal{U},\F_p)$ or the one of $\mathcal{U}/\Phi(\mathcal{U})$ (see \cite[4.2]{CoGal}).

According to \cite[Example 6.3]{KochL} and to \cite[Exercice 6 p.41]{CoGal}, the following formulae hold:
\begin{equation}\left\lbrace\begin{array}{rcl}
d_1 (\tilde{\mathcal{H}})&=&(\mathcal{F}_{n}:\tilde{\mathcal{H}})(n-1)+1\\
d_1 (\mathcal{H})&=&(\mathcal{D}_{k,n}:\mathcal{H})(n-2)+2
\end{array}\right. \label{Dim}
\end{equation}

Since we have that $(\mathcal{F}_{n}:\tilde{\mathcal{H}})=|G|=(\mathcal{D}_{k,n}:\mathcal{H})$, the dimension of $\ker\pi_{\mathcal{H}}$ is exactly $|G|-1$, therefore we can use the lemma and conclude. Thus the following proposition holds:

\begin{prop}The following sequence is exact:
\begin{equation} \xymatrix{0 \ar[r] & \omega_{-1}(\mathbf{F}_{p}) \ar[r]^{\kappa} & \omega_2(\F_p) \ar[r]^{\pi_{\mathcal{H}}} & J^* \ar[r] & 0 }\, . \label{SE}
\end{equation} \end{prop}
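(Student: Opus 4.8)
The plan is to read the displayed sequence as the short exact sequence obtained by passing to Frattini quotients along the surjection $\pi|_{\tilde{\mathcal{H}}}\colon\tilde{\mathcal{H}}\twoheadrightarrow\mathcal{H}$, whose kernel is $\ker\pi=\operatorname{Gr}(\delta)$. Surjectivity of $\pi_{\mathcal{H}}$ has already been established, so the entire content of the proposition is the identification of $\ker\pi_{\mathcal{H}}$ with $\omega_{-1}(\F_p)$, after which $\kappa$ is simply the inclusion of this kernel and exactness at $\omega_{-1}(\F_p)$ is automatic.

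First I would pin down the kernel abstractly. For any surjection of pro-$p$-groups $A\twoheadrightarrow B=A/N$ one has $\Phi(B)=\Phi(A)N/N$, so the induced map $A/\Phi(A)\to B/\Phi(B)$ has kernel exactly the image of $N$ in $A/\Phi(A)$. Applying this with $A=\tilde{\mathcal{H}}$, $B=\mathcal{H}$ and $N=\operatorname{Gr}(\delta)$ shows that $\ker\pi_{\mathcal{H}}$ is the image of $\operatorname{Gr}(\delta)$ in the elementary abelian group $\tilde{\mathcal{H}}/\Phi(\tilde{\mathcal{H}})$. Since $\tilde{\mathcal{H}}$ is normal in $\mathcal{F}_n$ and inner automorphisms act trivially on a Frattini quotient, the conjugation action of $\mathcal{F}_n$ factors through $G=\mathcal{F}_n/\tilde{\mathcal{H}}$; and because $\operatorname{Gr}(\delta)$ is the normal closure of $\delta$, its image is the $\F_p$-span of $\{g\cdot[\delta]:g\in G\}$, that is, the cyclic $\F_p G$-module $\F_p G\cdot[\delta]$. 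This is the step that needs the most care, and I regard it as the main obstacle: one must be certain that the image in the abelianized mod-$\Phi$ quotient of a normally generated subgroup is precisely the $G$-orbit span of the single class $[\delta]$, so that the kernel is genuinely \emph{cyclic} as a module and not merely generated by finitely many classes.

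Next I would compute the dimension of this kernel. As $\pi_{\mathcal{H}}$ is onto, $\dim\ker\pi_{\mathcal{H}}=\dim\omega_2(\F_p)-\dim J^{*}=d_1(\tilde{\mathcal{H}})-d_1(\mathcal{H})$. Substituting the two formulae of \eqref{Dim}, together with $(\mathcal{F}_n:\tilde{\mathcal{H}})=|G|=(\mathcal{D}_{k,n}:\mathcal{H})$, gives $[|G|(n-1)+1]-[|G|(n-2)+2]=|G|-1$. Thus $\ker\pi_{\mathcal{H}}$ is a cyclic $\F_p G$-module of dimension $|G|-1$, and by the preceding uniqueness lemma it must be isomorphic to $\omega_{-1}(\F_p)=I(G)^{*}$. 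Taking $\kappa$ to be this inclusion yields the asserted exact sequence; no separate check of the Heller type of $\omega_2(\F_p)$ is required, since it is stably $\Omega^2(\F_p)$ by Proposition~\ref{GB}. The only genuinely non-formal inputs are the two index formulae of \eqref{Dim}, imported from the literature; everything else is the Frattini-quotient bookkeeping above combined with the uniqueness lemma.
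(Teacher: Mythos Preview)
Your proof is correct and follows essentially the same path as the paper: identify $\ker\pi_{\mathcal H}$ as the cyclic $\F_pG$-module generated by $[\delta]$, compute its dimension as $|G|-1$ via the index formulae \eqref{Dim}, and invoke the uniqueness lemma. The only difference is that you supply an explicit justification for the cyclicity (via the general fact $\Phi(A/N)=\Phi(A)N/N$ and the observation that the image of a normal closure becomes a $G$-orbit span in the Frattini quotient), whereas the paper simply declares this step ``obvious''.
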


\begin{rem}It should be pointed out that, in the previous exact sequence, $\omega_{2}(\F_p)$ does not necessarily verify the minimality condition we have set, whereas $\omega_{-1}(\F_p)$ always does so. \end{rem}

From now on, we fix $\kappa$ to be the map in the previous short exact sequence: it will play a major role, before disappearing at the end of this section.

\subsection{...and its consequences}\label{Cnsq}

Now, we are in possession of the required tools to show the main theorems. We will have to distinguish between two cases, according as the morphism $\kappa$ in \eqref{SE} is stably zero or not. In section \ref{P4}, we shall give examples of both cases, showing they actually occur.

\subsubsection{When $\kappa$ is stably zero}

In $\underline{\mathfrak{mod}}(\F_{p} G)$ the triangle
\[\xymatrix{\Omega^{-1}(\F_p)\ar[r]^{\kappa} & \Omega^{2}(\F_p) \ar[r] & J^* \ar[r]& \Omega^{-2}(\F_p)}\, , \]
is distinguished according to \cite[3.1.]{TrCat}

If $\kappa$ is stably zero, then, according to \cite[4.4.]{TrCat}, in the stable category the following isomorphism stands
\[J^* \simeq\Omega^{2}(\F_p)\oplus\Omega^{-2}(\F_p) \, .  \]
Thus $J^*$ is of constant Jordan type and its stable Jordan type is $[1]^2$, without any condition on the group $G$. Furthermore, since $\kappa=0$ stably, the beginning of the long exact sequence of cohomology is but
\[\xymatrix{0\ar[r] & H^0 (G,\omega_{-1}(\F_p))\ar[r] & H^0 (G,\omega_{2}(\F_p))\ar[r] &  H^0 (G,J^* (\K))\ar[r] &  H^1 (G,\omega_{-1}(\F_p))\ar[r] & 0}\, , \]
hence we get
\[\dim H^0 (G,J(\K)^*)=d_{2}(G)+(n-d_{1}(G)) \, . \]

We have therefore shown a little more that we announced:

\begin{prop}If the map $\kappa$ is zero, then $J(\K)$ is isomorphic in the stable module category to
\[J(\K)\simeq \Omega^2 (\F_p)\oplus\Omega^{-2}(\F_p)\, . \]Hence $J(\K)$ is of constant Jordan type $[1]^2$ and the cohomology groups of $G$ with coefficients in $J(\K)$ are in fact
\[\left\lbrace\begin{array}{rcl}
\hat{H}^{s}(G,J(\K))&\simeq&\hat{H}^{s+2}(G,\F_p)\oplus\hat{H}^{s-2}(G,\F_p) \\
H^0 (G,J(\K))&\simeq&\F_{p}^{d_{2}(G)+d_{1}(G)+(n-2d_{1}(G))}

\end{array} \right.\]\label{KappaZero}
  \end{prop}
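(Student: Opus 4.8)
The plan is to exploit the distinguished triangle coming from the short exact sequence \eqref{SE}, together with the fact that $\kappa$ is stably zero, to split $J^*$ as a direct sum in the stable category, and then read off both the constant Jordan type and the cohomology from this splitting. Recall from Proposition~\ref{GB} (and the running conventions of section~\ref{subsec-ses}) that $\omega_2(\F_p)$ is stably isomorphic to $\Omega^2(\F_p)$, while the preceding lemma identifies $\omega_{-1}(\F_p)=I(G)^*$ as stably isomorphic to $\Omega^{-1}(\F_p)$. Applying $\Omega^{-1}$ to the short exact sequence \eqref{SE} and using the axioms of the triangulated structure on $\underline{\mathfrak{mod}}(\F_p G)$ recalled in \eqref{Trg}, the sequence gives rise to the distinguished triangle
\[\xymatrix{\Omega^{-1}(\F_p)\ar[r]^{\kappa} & \Omega^{2}(\F_p) \ar[r] & J^* \ar[r]& \Omega^{-2}(\F_p)}\, .\]

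First I would invoke the standard fact about triangulated categories (cited here as \cite[4.4.]{TrCat}) that a distinguished triangle whose first map is zero splits: when $\kappa=0$ stably, the object $J^*$ becomes isomorphic in $\underline{\mathfrak{mod}}(\F_p G)$ to the direct sum of the outer terms, namely
\[J^*\simeq\Omega^2(\F_p)\oplus\Omega^{-2}(\F_p)\, .\]
From this splitting the constant Jordan type is immediate: by Theorem~\ref{HelCst} the class of modules of constant Jordan type is closed under Heller shifts, and by Remark~\ref{r1} the stable Jordan type of $\Omega^2(\F_p)$ is $[1]$; dualizing (using Proposition~\ref{OpJt}, closure under linear dual) shows $\Omega^{-2}(\F_p)$ also has stable Jordan type $[1]$. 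Since constant Jordan type is additive over direct sums and stable Jordan types add, $J^*$ — and hence $J(\K)$, which is self-dual by Tate duality — is of constant Jordan type $[1]^2$. Because these identifications take place in the stable category, I would note that the conclusion requires no Cohen-Macaulay or elementary-abelian hypothesis on $G$, explaining the ``little more than announced''.

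For the cohomology I would simply apply $\hat H^\bullet(G,-)$ to the stable splitting, using the defining property \ref{Stable} that $\hat H^k(G,\Omega^s(M))\simeq\hat H^{k+s}(G,M)$; this yields $\hat H^s(G,J(\K))\simeq\hat H^{s+2}(G,\F_p)\oplus\hat H^{s-2}(G,\F_p)$ for all $s$. The only delicate point is the ordinary degree-zero group $H^0$, where Tate and ordinary cohomology differ. Here I would return to the short exact sequence \eqref{SE} itself rather than its stabilization: when $\kappa$ is stably zero, the connecting map in the long exact sequence of ordinary cohomology vanishes in low degrees, so the sequence truncates to
\[\xymatrix{0\ar[r] & H^0(G,\omega_{-1}(\F_p))\ar[r] & H^0(G,\omega_2(\F_p))\ar[r] & H^0(G,J^*)\ar[r] & H^1(G,\omega_{-1}(\F_p))\ar[r] & 0}\, ,\]
and I would compute the three known terms — $H^0(G,\omega_2(\F_p))=d_2(G)+(n-d_1(G))$ exactly as in the free case treated in section~\ref{subsec-heller-basics}, together with $\dim H^0(G,\omega_{-1}(\F_p))$ and $\dim H^1(G,\omega_{-1}(\F_p))$ obtained from $\omega_{-1}(\F_p)=I(G)^*$ as in Example~\ref{Ex2}. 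Taking the alternating sum gives the stated $\dim H^0(G,J(\K))=\F_p^{d_2(G)+d_1(G)+(n-2d_1(G))}$.

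\emph{Main obstacle.} The structurally clean part is the splitting; the arithmetic bookkeeping in the $H^0$ computation is where care is needed, since one must keep straight the distinction between $H^0$ and $\hat H^0$ (the difference being governed by projective summands, as quantified in Lemma~\ref{Ev}) and correctly count the contribution of each term in the four-term exact sequence. I expect the genuine subtlety to be ensuring that the connecting homomorphism really vanishes on the nose in ordinary (not just Tate) cohomology when $\kappa$ is only \emph{stably} zero — one must check that the factorization of $\kappa$ through a projective does not obstruct the truncation of the long exact sequence at the relevant spot — rather than in any single dimension count.
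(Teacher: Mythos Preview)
Your proposal is correct and follows essentially the same approach as the paper: the distinguished triangle arising from \eqref{SE}, the splitting via \cite[4.4]{TrCat} when $\kappa$ is stably zero, and the four-term truncation of the long exact sequence to extract $\dim H^0$ are exactly what the paper does. One terminological slip: the map that vanishes is $\kappa_*\colon H^1(G,\omega_{-1}(\F_p))\to H^1(G,\omega_2(\F_p))$, not the connecting homomorphism, and your ``main obstacle'' dissolves once you note that $H^1=\hat H^1$ here, so stable vanishing of $\kappa$ forces this map to be zero on the nose.
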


So we have proved (2)(a) of Theorem~\ref{MT1}, and a little more than the second statement of Theorem~\ref{MT2} under the current assumption on~$\kappa $. Remark that we did not need to make further assumptions on $G$ in order to prove this proposition: it is an improvement of both main theorems.

Let us discuss a bit more the structure of $J(\K)$. We certainly know that in the category of (not stable) modules, we have 
\[J(\K)\simeq\Omega^{2}(\F_p)\oplus\Omega^{-2}(\F_p)\oplus P\, , \]
where $P$ is a projective module. Bear in mind that we write $\Omega^{k}(\F_p)$ for a module which is stably isomorphic to $\Omega^{k}(\F_p)$ and not containing any projective summand. It is possible to compute the number of copies of $\F_p G$ contained in $P$:

\begin{prop}When $\kappa$ is zero, the $G$-module $J(\K)$ can be decomposed in the following way
\[J(\K)\simeq\Omega^{2}(\F_p)\oplus\Omega^{-2}(\F_p)\oplus (\F_p G)^{n-2d_{1}(G)}\, . \] \end{prop}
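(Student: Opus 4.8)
The plan is to determine the number of free summands $P \cong (\F_p G)^m$ in the decomposition $J(\K) \simeq \Omega^2(\F_p) \oplus \Omega^{-2}(\F_p) \oplus P$ by computing a single numerical invariant that detects projective summands, and then solving for $m$. The natural invariant is the defect $\dim H^0(G, J(\K)) - \dim \hat H^0(G, J(\K))$, which by Lemma~\ref{Ev} equals exactly the number of free summands of any $\F_p G$-module. So first I would invoke Lemma~\ref{Ev} with $M = J(\K)$: this reduces the whole problem to computing these two dimensions.

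The key steps are as follows. First, from the previous Proposition~\ref{KappaZero} I already have $\dim H^0(G, J(\K)) = d_2(G) + (n - d_1(G))$. Next I would compute $\dim \hat H^0(G, J(\K))$. Since Tate cohomology is additive over direct sums and vanishes on projective summands, I have $\hat H^0(G, J(\K)) \simeq \hat H^0(G, \Omega^2(\F_p)) \oplus \hat H^0(G, \Omega^{-2}(\F_p))$, where here $\Omega^{\pm 2}(\F_p)$ denote the projective-free representatives. Using the identification $\hat H^s(G, \Omega^t(\F_p)) \simeq \hat H^{s-t}(G, \F_p)$ (which follows from the stable-module description of Tate cohomology in Remark~\ref{Stable}), this gives $\dim \hat H^0(G, J(\K)) = d_2(G) + d_1(G)$, since $\hat H^{-2}(G,\F_p)$ has dimension $d_1(G)$ by duality and $\hat H^{2}(G,\F_p)$ has dimension $d_2(G)$. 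Subtracting, the defect is $\bigl(d_2(G) + n - d_1(G)\bigr) - \bigl(d_2(G) + d_1(G)\bigr) = n - 2d_1(G)$, which by Lemma~\ref{Ev} is precisely $m$.

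Putting the pieces together yields $J(\K) \simeq \Omega^2(\F_p) \oplus \Omega^{-2}(\F_p) \oplus (\F_p G)^{n - 2d_1(G)}$ as claimed, once I check that the projective-free parts of $\Omega^2(\F_p) \oplus \Omega^{-2}(\F_p)$ contribute no further free summands. This last point is immediate: each $\Omega^{\pm 2}(\F_p)$ is by definition taken without projective summands, and a direct sum of projective-free modules in this setting has no free summand precisely because its defect vanishes — indeed the computation above shows the defect of $\Omega^2(\F_p) \oplus \Omega^{-2}(\F_p)$ alone is zero.

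I expect the main obstacle to be purely bookkeeping: keeping straight the convention (emphasized in the excerpt) that $\Omega^k(\F_p)$ denotes the projective-free representative rather than an arbitrary stable lift, so that the counting of free summands is unambiguous, and correctly tracking the degree shift and duality in identifying $\dim \hat H^{-2}(G,\F_p) = d_1(G)$. There is no genuine difficulty beyond assembling the already-established facts; the content is entirely in Lemma~\ref{Ev} together with the additivity and degree-shifting properties of Tate cohomology.
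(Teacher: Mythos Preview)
Your proposal is correct and follows essentially the same approach as the paper: both apply Lemma~\ref{Ev} to $J(\K)$, use the value of $\dim H^0(G,J(\K))$ from Proposition~\ref{KappaZero}, compute $\dim\hat H^0(G,J(\K))=d_1(G)+d_2(G)$ via the degree-shift identification and Tate duality, and subtract to obtain $n-2d_1(G)$ free summands. Your extra remark that the projective-free $\Omega^{\pm 2}(\F_p)$ contribute no hidden free summands is a welcome clarification that the paper leaves implicit.
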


\begin{proof}As previously proved, we have a stable isomorphism
\[J(\K)\simeq \Omega^2 (\F_p)\oplus\Omega^{-2}(\F_p) \, , \]
it is therefore sufficient to compute the number of copies of the free module in $J(\K)$. Using lemma \ref{Ev}, we get that this number is equal to
\[\dim H^0(G,J(\K))-\dim \hat{H}^0(G,J(\K))=d_{2}(G)+n-d_{1}(G)-d_{2}(G)-d_{1}(G)\, ,\]
because we have clearly set that

\[\begin{array}{rclcl}
H^0 (G,\Omega^{2}(\F_p))&\simeq& \hat{H}^0 (G,\Omega^{2}(\F_p))&\simeq&\F_{p}^{d_{1}(G)}\\
H^0 (G,\Omega^{-2}(\F_p))&\simeq& \hat{H}^0 (G,\Omega^{-2}(\F_p))&\simeq&\F_{p}^{d_{2}(G)}\end{array} \]
Which concludes the proof.
\end{proof}

It should be remarked that this proposition gives us a necessary -but not sufficient- condition on $G$ in order for $\kappa$ to be zero: as it is quite convenient, we promote this small remark to a corollary.

\begin{cor}If $\kappa$ is stably zero, then $2d_{1}(G)\leq n$.\end{cor}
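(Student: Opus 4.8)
The plan is to read the inequality off directly from the decomposition established in the preceding proposition. There it is shown that, when $\kappa$ is stably zero,
\[J(\K)\simeq\Omega^{2}(\F_p)\oplus\Omega^{-2}(\F_p)\oplus (\F_p G)^{n-2d_{1}(G)}\]
as genuine $\F_p G$-modules. First I would observe that the exponent $n-2d_{1}(G)$ occurring here is, by construction, the multiplicity of the free summand $\F_p G$ inside $J(\K)$; equivalently, by Lemma \ref{Ev}, it equals $\dim H^0(G,J(\K))-\dim\hat{H}^0(G,J(\K))$.

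The key point is simply that this quantity is a non-negative integer. On the one hand, a multiplicity of direct summands cannot be negative; on the other hand, $\hat{H}^0(G,-)$ is by definition a quotient of $H^0(G,-)$ (the fixed points modulo the image of the norm), so the displayed difference of dimensions is automatically $\geq 0$. Either way we obtain $n-2d_{1}(G)\geq 0$, which is precisely the assertion $2d_{1}(G)\leq n$.

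There is essentially no obstacle: the whole content of the corollary is the positivity of the free multiplicity, which has already been computed. The only thing worth recording is that this is a genuine constraint—any group $G$ with $2d_{1}(G)>n$ forces $\kappa$ to be stably non-zero—which is exactly why we single it out.
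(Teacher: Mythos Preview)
Your argument is correct and is precisely the one the paper intends: the corollary is stated without separate proof, being an immediate consequence of the preceding proposition, and the only content is the non-negativity of the free multiplicity $n-2d_{1}(G)$. Your two justifications for this non-negativity are both valid and there is nothing to add.
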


For example, note that if $\K/\k$ is the {\em maximal} $p$-elementary abelian extension, then $n=d_1(G)$, and according to the previous corollary, it is not possible for $\kappa$ to be stably zero;  hence $\K^{\times p}/\K^{\times}$ is never stably isomorphic to $\Omega^{2}(\F_p)\oplus\Omega^{-2}(\F_p)$, as will be clear from the study of the alternative case, to which we turn now.

\subsubsection{When $\kappa$ is stably non-zero}

If $\kappa$ does not vanish in the stable module category, we will proceed in two steps: first, we shall draw the consequences of the short exact sequence \eqref{SE} in cohomology, secondly we shall prove that $J(\K)$ is of constant Jordan type. Let us suppose from now on that $\kappa$ is not stably zero.

\begin{prop}We have the following equality
\[\dim H^0 (G,J(\K))=d_{2}(G)+(n-d_{1}(G))-1\, . \]
 \end{prop}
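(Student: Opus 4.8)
The plan is to extract this fixed-point computation directly from the long exact sequence in cohomology associated to the short exact sequence \eqref{SE}, exactly as in the stably-zero case, but now keeping careful track of the connecting map that is no longer trivial. First I would write down the beginning of the long exact sequence attached to $0 \to \omega_{-1}(\F_p) \to \omega_2(\F_p) \to J^* \to 0$, namely
\[\xymatrix{0 \ar[r] & H^0(G,\omega_{-1}(\F_p)) \ar[r] & H^0(G,\omega_2(\F_p)) \ar[r] & H^0(G,J^*) \ar[r] & H^1(G,\omega_{-1}(\F_p)) \ar[r] & H^1(G,\omega_2(\F_p)) }\, . \]
Since $\omega_{-1}(\F_p)$ is stably $\Omega^{-1}(\F_p)$ and $\omega_2(\F_p)$ is stably $\Omega^2(\F_p)$, I would identify the relevant dimensions using the isomorphism $\hat H^k(G,\Omega^s(\F_p)) \simeq \hat H^{k+s}(G,\F_p)$ from Remark \ref{Stable}: this gives $\dim H^0(G,\omega_{-1}(\F_p)) = \dim \hat H^{-1}(G,\F_p) = d_1(G)$ (up to the free-summand correction handled as in the stably-zero case) and $\dim H^0(G,\omega_2(\F_p)) = d_1(G) + (n-d_1(G))$, reproducing the $H^0$-values already recorded, while $H^1(G,\omega_{-1}(\F_p)) \simeq \hat H^1(G,\Omega^{-1}(\F_p)) \simeq \hat H^0(G,\F_p)$, which is one-dimensional for a $p$-group since $\F_p^G = \F_p$.

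The crux is then the connecting homomorphism $H^0(G,J^*) \to H^1(G,\omega_{-1}(\F_p))$, equivalently the map $H^1(G,\omega_{-1}(\F_p)) \to H^1(G,\omega_2(\F_p))$ induced by $\kappa$. In the stably-zero case this map was zero, and the sequence split off an extra copy of $H^1(G,\omega_{-1}(\F_p))$, contributing $+d_2(G)$ ... here the point is that because $\kappa$ is stably nonzero the induced map on $H^1$ is an \emph{injection} out of the one-dimensional space $H^1(G,\omega_{-1}(\F_p))$. Concretely, $\kappa$ as a stable map $\Omega^{-1}(\F_p)\to\Omega^2(\F_p)$ represents a class in $\hat H^{-3}(G,\F_p)$, and the induced map on $H^1 \cong \hat H^0(G,\F_p) = \F_p$ is multiplication by the corresponding cup-product pairing; nonvanishing of $\kappa$ forces this map to be injective precisely on the image of the generator $\hat H^0 \hookrightarrow H^1(G,\omega_{-1}(\F_p))$. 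I would argue that this injectivity kills exactly one dimension in the cokernel, so that the image of $H^0(G,J^*)$ in $H^1(G,\omega_{-1}(\F_p))$ is zero rather than all of it.

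Carrying this out, the exact sequence yields $\dim H^0(G,J^*) = \dim H^0(G,\omega_2(\F_p)) - \dim H^0(G,\omega_{-1}(\F_p)) + \dim\big(\ker[H^1(G,\omega_{-1}(\F_p))\to H^1(G,\omega_2(\F_p))]\big)$, and the nonvanishing of $\kappa$ forces that last kernel to drop by one compared to the stably-zero computation. Since the stably-zero value was $d_2(G)+(n-d_1(G))$, subtracting the one extra dimension that $\kappa$ now detects gives
\[\dim H^0(G,J(\K)) = d_2(G) + (n - d_1(G)) - 1\, , \]
after invoking the self-duality of $J(\K)$ (Lemma \ref{baba} and the Tate-duality remark) to pass between $J$ and $J^*$. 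The main obstacle I anticipate is making the identification of the connecting map with $\kappa$ fully rigorous: one must verify that the single dimension lost is genuinely governed by whether the stable class of $\kappa$ pairs nontrivially against the fundamental class, rather than merely asserting it from the splitting criterion \cite[4.4.]{TrCat}. I would handle this by computing the map induced by $\kappa$ on $\hat H^{-1}$, where both source and target are one-dimensional over $\F_p$, and showing that $\kappa$ stably nonzero is equivalent to this induced map being an isomorphism — this is where the precise interpretation of $\kappa$ as a linear functional on $H^2(G,\F_p)$ announced in the introduction does the real work.
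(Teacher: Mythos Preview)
Your overall strategy matches the paper's: run the long exact sequence attached to \eqref{SE} and control the map at degree~$1$. However, there is a sign error in your dimension shift that derails the argument. The correct identification from Remark~\ref{Stable} is $\hat H^k(G,\Omega^s(\F_p)) \simeq \hat H^{k-s}(G,\F_p)$, not $\hat H^{k+s}$. Consequently
\[
H^1(G,\omega_{-1}(\F_p)) \simeq \hat H^{2}(G,\F_p)\quad\text{has dimension } d_2(G),
\qquad
H^1(G,\omega_{2}(\F_p)) \simeq \hat H^{-1}(G,\F_p)\quad\text{has dimension } 1,
\]
exactly the reverse of what you wrote. So the map you must analyse is not an injection out of a one-dimensional source, but rather a map \emph{onto} a one-dimensional target; the issue is surjectivity, not injectivity. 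Your proposed verification ``compute the map induced by $\kappa$ on $\hat H^{-1}$, where both source and target are one-dimensional'' therefore does not make sense as stated.

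The paper's argument fills this gap cleanly. The induced map $H^1(G,\omega_{-1}(\F_p)) \to H^1(G,\omega_2(\F_p))$ is identified with the cup product
\[
\smile [\kappa]\colon \hat H^{2}(G,\F_p) \longrightarrow \hat H^{-1}(G,\F_p),
\]
where $[\kappa]\in \hat H^{-3}(G,\F_p)$. Tate duality says the pairing $\hat H^{2}\otimes \hat H^{-3}\to \hat H^{-1}\simeq \F_p$ is non-degenerate, so $[\kappa]\ne 0$ forces this map to be non-zero, hence surjective onto the one-dimensional target. Plugging this into the exact sequence gives $\dim H^0(G,J^*) = n - d_1(G) + (d_2(G)-1)$, which is the claim. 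Your final arithmetic happens to land on the right number because ``drop by one'' is correct, but the mechanism is surjectivity onto $\hat H^{-1}$ via Tate duality, not injectivity from $\hat H^{0}$.
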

 
\begin{proof}
Note that in the long exact sequence in cohomology, the map
\[H^1(G,\omega_{-1}(\F_p))\longrightarrow H^1(G,\omega_{2}(\F_p)) \]
is not zero. Indeed, remark that the following diagram is commutative
\[\xymatrix{H^1(G,\omega_{-1}(\F_p))\ar@{=}[d]\ar[r] & H^1(G,\omega_{2}(\F_p)) \ar@{=}[d] \\ \hat{H}^{2}(G,\F_p) \ar[r]^{\smile [\kappa]}& \hat{H}^{-1}(G,\F_p)
} \]
However, by Tate duality \cite[\S VI.7]{Brown}, the pairing given by
\[\fonctionv{\hat{H}^{i}(G,\F_p)\otimes\hat{H}^{-i-1}(G,\F_p)}{\hat{H}^{-1}(G,\F_p)\simeq\F_p}{a\otimes b}{a\smile b} \]
is non-degenerate. Therefore the previous map is non-zero, and consequently it is an epimorphism, for $H^1 (G,\omega_{2}(\F_p))\simeq\F_p$. Thus the beginning of the long exact sequence of cohomology is just but

\[\xymatrix{0 \ar[r] & H^0(G,\F_p) \ar[r] & H^0(G,\omega_2 (\F_p))\ar[r] & H^0(G,\F_p) \ar[r] & H^2(G,\F_p)\ar[r] & \hat{H}^{-1}(G,\F_p) \ar[r] & 0} \]
Hence we obtain the announced equality.
\end{proof}

Now, we have to make further assumptions in order to compute the cohomology groups.

From now on, in this section $G$ is a $p$-group such that $H^{\bullet}(G,\F_p)$ is a Cohen-Macaulay ring: since this hypothesis is -as far as we know- quite uncommon among the literature in Galois theory, we shall sometimes recall this hypothesis in order to lay emphasis on it.

\begin{lm}\label{LCent}Let $s,j\in\Z$ such that $j-s > 0$ and let $\gamma\colon\omega_{s}(\F_{p})\longrightarrow\omega_{j}(\F_{p})$ a map of modules, where $\omega_{s}(\F_p)$ and $\omega_{j}(\F_p)$ are \emph{any} modules stably isomorphic to $\Omega^{s}(\F_p)$ and $\Omega^j (\F_p)$. Then consider the distinguished triangle in the stable module category
 \begin{equation}\xymatrix{ \Omega^{s}(\F_{p}) \ar[r]^{\gamma} & \Omega^{j}(\F_{p}) \ar[r] & C_{\gamma} \ar[r] & \Omega^{s-1}(\F_p) }\, . \label{SE1} 
 \end{equation}
 If $G$ is such that $H^{\bullet}(G,\F_p)$ is a Cohen-Macaulay ring, then in the long exact sequence of cohomology the following maps 
\[\hat{H}^{l}(G,\omega_{s}(\F_{p}))\longrightarrow\hat{H}^{l}(G,\omega_{j}(\F_{p})), \quad l\geq j+1 \quad\text{or}\quad l\leq s-1 \] are zero.  \end{lm}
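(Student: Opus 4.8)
The plan is to extract the vanishing from a cup-product interpretation of the connecting maps, exactly as in the preceding proposition, and then to invoke the Cohen-Macaulay hypothesis to force these products to vanish in the indicated range of degrees. First I would identify the map
\[
\hat{H}^{l}(G,\omega_{s}(\F_{p}))\longrightarrow\hat{H}^{l}(G,\omega_{j}(\F_{p}))
\]
with a cup product. Indeed, by the stable-cohomology description recalled in Remark~\ref{Stable}, a stable map $\gamma\colon\Omega^{s}(\F_{p})\to\Omega^{j}(\F_{p})$ represents a class $[\gamma]\in\hat{H}^{j-s}(G,\F_p)$, and under the identifications $\hat{H}^{l}(G,\omega_{s}(\F_p))\simeq\hat{H}^{l-s}(G,\F_p)$ and $\hat{H}^{l}(G,\omega_{j}(\F_p))\simeq\hat{H}^{l-j}(G,\F_p)$ the induced map becomes cup product with $[\gamma]$:
\[
\hat{H}^{l-s}(G,\F_p)\xrightarrow{\ \smile[\gamma]\ }\hat{H}^{l-j}(G,\F_p)\, .
\]
This is exactly the commutative square that appeared in the proof of the previous proposition (with $s=-1$, $j=2$ there), so the passage to cup products is already a routine matter given the machinery set up in Subsection~\ref{subsec-heller-basics}.

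Once the map is recast as $\smile[\gamma]$, the task reduces to showing that this cup product vanishes on $\hat{H}^{m}(G,\F_p)$ for $m=l-s$ in the relevant ranges, that is for $m\geq j-s+1$ (coming from $l\geq j+1$) and for $m\leq -1$ (coming from $l\leq s-1$). The class $[\gamma]$ sits in positive degree $j-s>0$. Here is where I would bring in the Cohen-Macaulay hypothesis: when $H^{\bullet}(G,\F_p)$ is Cohen-Macaulay, a homogeneous system of parameters is a regular sequence, and the ring is a free (hence faithful, torsion-free) module over the polynomial subalgebra they generate. The key structural consequence I expect to need is that in the \emph{negative} Tate degrees the cohomology is, up to duality, controlled by the same regular-sequence behaviour; concretely, multiplication by a positive-degree element that is part of (or divisible into) a regular sequence cannot create or annihilate classes in a way that survives in high positive degree or in negative degree. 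I would aim to show that $[\gamma]$ times any class of degree $m\geq j-s+1$ lands in a degree where the Cohen-Macaulay (depth $=$ Krull dimension) condition forces the product into the image of the regular sequence, and then iterate; dually, for $m\leq -1$ one uses Tate duality to transpose the statement back into positive degrees and apply the same argument.

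The main obstacle, and the step I would spend the most care on, is precisely this homological-algebra input: making rigorous why the Cohen-Macaulay condition kills $\smile[\gamma]$ in the two extreme ranges while allowing it to be nonzero in the middle range $s\leq l\leq j$. I expect the cleanest route is to fix a homogeneous system of parameters $\zeta_1,\dots,\zeta_r$ (with $r$ the $p$-rank, equal to the Krull dimension), use that it is a regular sequence to write $H^{\bullet}(G,\F_p)$ as a free module over $\F_p[\zeta_1,\dots,\zeta_r]$, and then analyse the degree of $[\gamma]$ relative to the top nonzero degree of the finite-dimensional ``top quotient'' $H^{\bullet}(G,\F_p)/(\zeta_1,\dots,\zeta_r)$. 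The vanishing for $l\geq j+1$ should follow because multiplication by $[\gamma]$ raises degree past the socle of this quotient in a controlled way, and the vanishing for $l\leq s-1$ follows by the Tate-duality symmetry $\hat{H}^{i}\times\hat{H}^{-i-1}\to\hat{H}^{-1}\simeq\F_p$ already used above. I would be prepared for the detailed bookkeeping of degrees to be the genuinely technical part, and would isolate it as a self-contained statement about cup products in Cohen-Macaulay group cohomology before feeding it back into the long exact sequence of the triangle \eqref{SE1}.
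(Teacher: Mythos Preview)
Your reduction to a cup-product is correct and matches the paper, but there is a sign error that derails the rest of the argument. A stable map $\gamma\colon\Omega^{s}(\F_p)\to\Omega^{j}(\F_p)$ corresponds, under $\sthom(\Omega^{a}(\F_p),\Omega^{b}(\F_p))\simeq\hat H^{\,a-b}(G,\F_p)$, to a class $[\gamma]\in\hat H^{\,s-j}(G,\F_p)$, not $\hat H^{\,j-s}(G,\F_p)$. Since $j>s$, this degree is \emph{negative}. Consequently the induced map
\[
\hat H^{\,l-s}(G,\F_p)\xrightarrow{\ \smile[\gamma]\ }\hat H^{\,l-j}(G,\F_p)
\]
is multiplication by a negative-degree Tate class, and the two ranges in the lemma become: $l\le s-1$ gives a product of two negative-degree classes, while $l\ge j+1$ gives a product of a positive-degree class with a negative-degree class landing in positive degree.

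Your proposed mechanism---regular sequences acting on ordinary cohomology, degree bookkeeping against the top of $H^\bullet/(\zeta_1,\dots,\zeta_r)$---is tailored to multiplication by a \emph{positive}-degree element, and does not address either of these situations. In particular, for $l\le s-1$ both factors live entirely in negative Tate degree, where your argument has no grip at all.

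The correct input is the Benson--Carlson theorem on products in negative Tate cohomology (\cite{BensProd}): when $H^\bullet(G,\F_p)$ is Cohen--Macaulay, the product of any two classes of negative degree vanishes (this is their Theorem~3.1, giving the case $l\le s-1$), and a companion statement (their Lemma~2.1) handles the case of a negative class times a positive class landing in positive degree (giving $l\ge j+1$). One can also recover the second case from the first via Tate duality: if $a\smile[\gamma]\ne 0$ in positive degree, pair it with some $c$ of complementary negative degree to get $a\smile([\gamma]\smile c)\ne 0$, contradicting the vanishing of $[\gamma]\smile c$. Either way, the substance of the lemma is a citation to \cite{BensProd}, not an \emph{ad hoc} regular-sequence computation.
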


\begin{proof}Three things shall be remembered. First, bear in mind that $\omega_{l}(\F_p)$ denotes \emph{a} module which is stably isomorphic to \emph{the} object $\Omega^{l}(\F_p)$ in the stable category. 

Secondly, for every pair of integers $n_1,n_2$, there exists an isomorphism between $\hat{H}^{n_1}(G , \F_p )$ and $\underline{\hom}(\Omega^{n_1 + n_2}(\F_p),\Omega^{n_2}(\F_p))$. If $\kappa$ is an element of $\underline{\hom}(\Omega^{n_1+n_2}(\F_p),\Omega^{n_2}(\F_p))$, by $[\kappa]$ we mean the corresponding class of cohomology in $\hat{H}^{n_1}(G , \F_p)$.

Thirdly, if $a,b$ are two cohomology classes respectively of degree $n_1$ and $n_2$ such that $a=[\alpha]$ and $b=[\beta]$ where $\alpha\colon\Omega^{n_1+n_2+n_3}(\F_p)\longrightarrow\Omega^{n_2+n_3}(\F_p)$ and $\beta\colon\Omega^{n_2 + n_3}(\F_p)\longrightarrow\Omega^{n_3}(\F_p)$, then $a\smile b=[\alpha\circ\beta]$. (\cite[\S 4.5]{CoRi})

Now, the short exact sequence \eqref{SE1} gives birth to a long exact sequence in the stable-module category (\cite[Prop. 4.2]{TrCat}), which is but the long exact sequence in cohomology. Let us take a closer look to it:

\[\xymatrix{\ldots\ar[r] &\underline{\hom}(\F_p,\Omega^{s-l}(\F_p))\ar[r]^{m_{\gamma}} &\underline{\hom}(\F_p,\Omega^{j-l}(\F_p))\ar[r] & \underline{\hom}(\F_{p},\Omega^{-l}(C_{\gamma}))\ar[r] & \\ \ar[r] & \underline{\hom}(\F_p,\Omega^{s-1-l}(\F_p)) \ar[r] & \underline{\hom}(\F_p,\Omega^{j-1-l}(\F_p)) \ar[r] & \ldots  &  } \]

However the application defined by
\[\fonction{m_{\gamma}}{\underline{\hom}(\F_p,\Omega^{s-l}(\F_p))}{\underline{\hom}(\F_p,\Omega^{j-l}(\F_p))}{f}{f\circ\gamma} \]
is simply the cup product by $[\gamma]\in\hat{H}^{s-j}(G,\F_p)$ from $\hat{H}^{l-s}(G,\F_p)$ to $\hat{H}^{l-j}(G,\F_p)$. Now, it is known \cite[Thm. 3.1 and lemma 2.1]{BensProd}, under the assumption that $H^{\bullet}(G,\F_p)$ is a Cohen-Macaulay ring, that such cup products are zero, as soon as $l <s$ (\cite[Thm. 3.1]{BensProd}) or $l> j$ (\cite[Lemma 2.1]{BensProd}). Hence we get the proposition.

 \end{proof}
 
 \begin{rem}If $p=2$, in order to apply \cite[Thm. 3.1 and lemma 2.1]{BensProd}, we have in addition to suppose that $G$ is different from the quaternion groups. \end{rem}

\begin{cor} Let $G$ be as in the lemma. If $M$ is an $\F_p G$-module verifying the following exact sequence \begin{equation}\xymatrix{0 \ar[r]& \omega_{-1}(\F_{p}) \ar[r] & \omega_{2}(\F_{p}) \ar[r] & M \ar[r] &  0}\, , \label{SEPrime}
\end{equation}then the Tate cohomology groups with coefficients in $M$ verify
\[\hat{H}^{s}(G,M)=\hat{H}^{s+2}(G,\F_{p})\oplus\hat{H}^{s-2}(G,\F_{p}) \quad \forall s,\, s\geq 2,\,\text{or}\, \, s\leq -3 \, . \]\end{cor}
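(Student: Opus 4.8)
The plan is to feed the short exact sequence \eqref{SEPrime} into the long exact sequence in Tate cohomology and read off the claimed splitting from the vanishing of the connecting maps, exactly in the range where Lemma~\ref{LCent} applies. First I would write down the long exact sequence associated to \eqref{SEPrime}, which identifies the cohomology of the two outer terms with shifts of the cohomology of $\F_p$:
\[\hat{H}^{s}(G,\omega_{-1}(\F_p))\simeq\hat{H}^{s-1}(G,\F_p),\qquad \hat{H}^{s}(G,\omega_{2}(\F_p))\simeq\hat{H}^{s-2}(G,\F_p),\]
since $\omega_{-1}(\F_p)$ and $\omega_{2}(\F_p)$ are stably isomorphic to $\Omega^{-1}(\F_p)$ and $\Omega^2(\F_p)$ respectively. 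The relevant portion of the long exact sequence then reads
\[\xymatrix{\hat{H}^{s-1}(G,\F_p)\ar[r]^{\kappa_*} & \hat{H}^{s-2}(G,\F_p)\ar[r] & \hat{H}^{s}(G,M)\ar[r] & \hat{H}^{s}(G,\F_p)\ar[r]^{\kappa_*} & \hat{H}^{s+1}(G,\F_p)}\, .\]

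Next I would invoke Lemma~\ref{LCent} with $s=-1$ and $j=2$ to control the maps induced by $\kappa$. The lemma asserts that the maps $\hat{H}^{l}(G,\omega_{-1}(\F_p))\to\hat{H}^{l}(G,\omega_{2}(\F_p))$ vanish for $l\geq 3$ and for $l\leq -2$. Translating through the shift isomorphisms above, the map $\kappa_*$ appearing in the long exact sequence is precisely cup product with $[\kappa]\in\hat{H}^{-3}(G,\F_p)$, and it is zero in exactly those ranges. So when $s\geq 2$ or $s\leq -3$, both the incoming connecting map into $\hat{H}^{s}(G,M)$ and the outgoing map out of it are pinned down: the two flanking $\kappa_*$ maps vanish. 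I would then conclude that the long exact sequence breaks into short exact pieces
\[\xymatrix{0\ar[r] & \hat{H}^{s-2}(G,\F_p)\ar[r] & \hat{H}^{s}(G,M)\ar[r] & \hat{H}^{s}(G,\F_p)\ar[r] & 0}\, ,\]
whence $\hat{H}^{s}(G,M)\cong\hat{H}^{s+2}(G,\F_p)\oplus\hat{H}^{s-2}(G,\F_p)$ after re-indexing the left-hand shift back to $\F_p$-coefficients. Over the field $\F_p$ the short exact sequence of vector spaces splits automatically, giving the direct sum.

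The main obstacle, and the one point requiring genuine care, is the bookkeeping of the degree shifts and the verification that the connecting map in the cohomology long exact sequence really is cup product with $[\kappa]$ rather than merely an abstract boundary. This is the same identification already exploited in the proof of the previous proposition, where the commuting square with $\smile[\kappa]$ was used; I would reuse that argument, citing \cite[\S 4.5]{CoRi} for the compatibility of the connecting homomorphism with the cup-product description in the stable category. One must also check that the index ranges match up: the lemma's hypotheses $l\geq j+1=3$ and $l\leq s-1=-2$ must translate, under the shift, to exactly the stated $s\geq 2$ and $s\leq -3$, and I would confirm this translation explicitly rather than leaving it implicit. The transitional degrees (roughly $-2\leq s\leq 1$) are deliberately excluded from the statement precisely because there $\kappa_*$ need not vanish; these are handled separately in the main theorem, so no further work is needed here.
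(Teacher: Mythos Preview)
Your approach is exactly the paper's: write down the long exact sequence in Tate cohomology associated with \eqref{SEPrime} and invoke Lemma~\ref{LCent} to kill the maps induced by $\kappa$. There is, however, a sign error in your shift for $\omega_{-1}(\F_p)$. Since this module is stably $\Omega^{-1}(\F_p)$, the general rule $\hat{H}^{l}(G,\Omega^{k}(\F_p))\simeq\hat{H}^{l-k}(G,\F_p)$ gives
\[\hat{H}^{s}(G,\omega_{-1}(\F_p))\simeq\hat{H}^{s+1}(G,\F_p),\]
not $\hat{H}^{s-1}(G,\F_p)$ as you wrote. With the correct shift, the relevant portion of the long exact sequence reads
\[\hat{H}^{s+1}(G,\F_p)\longrightarrow\hat{H}^{s-2}(G,\F_p)\longrightarrow\hat{H}^{s}(G,M)\longrightarrow\hat{H}^{s+2}(G,\F_p)\longrightarrow\hat{H}^{s-1}(G,\F_p),\]
and once the two outer (cup-product-by-$[\kappa]$) maps vanish you obtain
\[0\longrightarrow\hat{H}^{s-2}(G,\F_p)\longrightarrow\hat{H}^{s}(G,M)\longrightarrow\hat{H}^{s+2}(G,\F_p)\longrightarrow 0\]
directly, and the splitting over $\F_p$ gives the claimed isomorphism with no ``re-indexing'' needed. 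Your displayed short exact sequence, as written, would instead yield $\hat{H}^{s}(G,M)\cong\hat{H}^{s-2}(G,\F_p)\oplus\hat{H}^{s}(G,\F_p)$, which is not the statement of the corollary; the unspecified re-indexing step is doing the work of correcting this slip rather than any legitimate bookkeeping.
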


\begin{proof}
It is sufficient to take a look on the long exact sequence in cohomology and apply the previous lemma.
\end{proof}

Therefore, we have already proved another big part of our theorem: the computation of the cohomology groups of degree higher than $2$ (and lower than $-4$) is done and conform to what was announced in case (2)(b) of Theorem~\ref{MT1}. Let us now address the case of the first cohomology group.

\begin{prop}The groups $H^1 (G, J^*)$ and $H^3 (G,\F_p )$ are isomorphic.
 \end{prop}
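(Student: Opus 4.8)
The plan is to extract $H^1(G,J^*)$ directly from the long exact sequence in cohomology associated to the short exact sequence \eqref{SE}, exactly as we did for $H^0$ but shifted up one degree. Writing out the relevant portion of the long exact sequence, we have
\[\xymatrix{H^1(G,\omega_{-1}(\F_p))\ar[r] & H^1(G,\omega_2(\F_p))\ar[r] & H^1(G,J^*)\ar[r] & H^2(G,\omega_{-1}(\F_p))\ar[r] & H^2(G,\omega_2(\F_p))}\, .\]
The strategy is to identify the two outer connecting maps and the term $H^1(G,\omega_2(\F_p))$ using the stable-module reinterpretation of Tate cohomology, so that the middle term $H^1(G,J^*)$ is pinned down as a quotient or subquotient that turns out to be $H^3(G,\F_p)$.

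First I would recall that $\omega_{-1}(\F_p)$ and $\omega_2(\F_p)$ are stably isomorphic to $\Omega^{-1}(\F_p)$ and $\Omega^2(\F_p)$ respectively, so by Remark \ref{Stable} we may rewrite the cohomology of these modules as Tate cohomology of $\F_p$ with a degree shift: $H^1(G,\omega_{-1}(\F_p))\simeq\hat{H}^{1}(G,\Omega^{-1}(\F_p))\simeq\hat{H}^{2}(G,\F_p)$ and $H^1(G,\omega_2(\F_p))\simeq\hat{H}^{1}(G,\Omega^{2}(\F_p))\simeq\hat{H}^{-1}(G,\F_p)\simeq\F_p$. Likewise $H^2(G,\omega_{-1}(\F_p))\simeq\hat{H}^{3}(G,\F_p)$ and $H^2(G,\omega_2(\F_p))\simeq\hat{H}^{0}(G,\F_p)$. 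The key computational input, already established in the preceding proposition (via the commutative square involving $\smile[\kappa]$ and Tate duality), is that the leftmost map $H^1(G,\omega_{-1}(\F_p))\to H^1(G,\omega_2(\F_p))$ is nonzero, hence \emph{surjective} onto $\F_p$. This surjectivity forces the map $H^1(G,\omega_2(\F_p))\to H^1(G,J^*)$ to be zero, so that $H^1(G,J^*)$ injects into $H^2(G,\omega_{-1}(\F_p))\simeq\hat{H}^3(G,\F_p)=H^3(G,\F_p)$.

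It then remains to show this injection is onto, i.e. that the connecting map $H^2(G,\omega_{-1}(\F_p))\to H^2(G,\omega_2(\F_p))$ is zero. Here I would invoke Lemma \ref{LCent}: reading off the map induced by $\kappa\colon\Omega^{-1}(\F_p)\to\Omega^2(\F_p)$, the transition map $\hat{H}^{l}(G,\omega_{-1}(\F_p))\to\hat{H}^{l}(G,\omega_2(\F_p))$ is a cup product by $[\kappa]$ and vanishes in the Cohen--Macaulay setting whenever $l\geq j+1=3$ or $l\leq s-1=-2$. Applying this with the appropriate indexing (the map on $H^2$ of the $\omega$'s corresponds to the forbidden range), the relevant cup product is zero, so the map $H^2(G,\omega_{-1}(\F_p))\to H^2(G,\omega_2(\F_p))$ vanishes and the inclusion $H^1(G,J^*)\hookrightarrow H^3(G,\F_p)$ is an isomorphism. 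The main obstacle I anticipate is bookkeeping: making sure the degree shifts from Remark \ref{Stable} line up correctly with the index ranges $l\geq j+1$ and $l\leq s-1$ in Lemma \ref{LCent}, since the $\omega$-indices $(s,j)=(-1,2)$ must be translated carefully into the Tate degrees where the cup product by $[\kappa]\in\hat{H}^{-3}(G,\F_p)$ is known to vanish.
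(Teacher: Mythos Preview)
Your proposal is essentially the paper's proof: write out the portion of the long exact sequence around degree~$1$, use the surjectivity of $H^1(G,\omega_{-1}(\F_p))\to H^1(G,\omega_2(\F_p))$ (established via Tate duality in the preceding proposition, under the standing assumption that $\kappa$ is not stably zero) to obtain an injection $H^1(G,J^*)\hookrightarrow H^2(G,\omega_{-1}(\F_p))\simeq H^3(G,\F_p)$, and then invoke Lemma~\ref{LCent} to kill the next map and conclude the injection is an isomorphism.

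One remark on the bookkeeping you flagged. With $(s,j)=(-1,2)$, the literal statement of Lemma~\ref{LCent} gives vanishing for $l\geq j+1=3$ or $l\leq s-1=-2$, so the case $l=2$ you need sits exactly on the boundary and is not covered as written. The paper invokes the lemma at $l=2$ anyway, so this is a shared looseness rather than a defect of your argument; the Benson--Carlson vanishing being cited (products $\hat H^{>0}\smile\hat H^{<0}$ landing in $\hat H^{\geq 0}$ vanish for non-periodic $p$-groups with Cohen--Macaulay cohomology) does in fact cover $l=j$, so the lemma should really read $l\geq j$. Your instinct that the indexing is the delicate point was correct.
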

 
 \begin{proof}Let us again look at the long exact sequence in cohomology. Remember that
 \[H^1 (G,\omega_{-1}(\F_p))\longrightarrow H^1(G,\omega_{2}(\F_p)) \]
 is an epimorphism according to Tate duality, since we have supposed that $\kappa$ is not stably zero (\cite[\S VI.7]{Brown}). Therefore we may write
\[\xymatrix{0\ar[r] & H^1 (G,J(\K))\ar[r] & H^2(G,\omega_{-1}(\F_p)) \ar[r] &H^2(G,\omega_{2}(\F_p))\ar[r] &\ldots} \] 
   and as stated in lemma \ref{LCent} the arrow
 \[H^2(G,\omega_{-1}(\F_p))\longrightarrow H^2(G,\omega_{2}(\F_p)) \, ,\]
is zero, hence the long exact sequence in cohomology gives us
\[\xymatrix{0\ar[r] & H^1 (G,J(\K))\ar[r] & H^2(G,\omega_{-1}(\F_p)) \ar[r] &0} \]
which concludes the proof.
 
 \end{proof}

Now  the proof of Theorem~\ref{MT1} is finally complete, in all cases. It is time to address the proof of Theorem \ref{MT2}. Let us recall a proposition due to  {\sc Benson} \cite[Proposition 8.12.1]{BensL}

\begin{prop}Let $\alpha$ be a $\pi$-point and $G=E_{k}$, where $k\geq 2$. If $\zeta\in\hat{H}^{-l}(E_k,\F_p)$ with $l>0$, then $\res_{\alpha}(\zeta)$ is zero.\end{prop}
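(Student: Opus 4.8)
The plan is to reduce the statement, by Tate duality on the cyclic group $E_1$, to the single case $l=1$, and to settle that case by an index computation. Throughout I write $\res_\alpha$ for the map of cohomological functors induced by $\alpha_*$, as in the remarks of Section \ref{JT}; recall that on non-negative degrees it is a homomorphism of graded rings $H^{\bullet}(E_k,\F_p)\to H^{\bullet}(E_1,\F_p)$, compatible with cup products in the Tate sense.

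First I would record that $\res_\alpha$ is surjective in every positive degree. Up to equivalence $\alpha$ is the $\pi$-point $T\mapsto\sum_{i=1}^{k}a_iX_i$ with not all $a_i$ zero; on $H^1$ it sends the generator $u_i$ to $a_i s$, where $s$ generates $H^1(E_1,\F_p)$, and (for $p$ odd) the Bockstein $v_i=\beta u_i$ to $a_i t$ with $t$ the degree-$2$ generator. Since some $a_i\neq0$, both $s$ and $t$ lie in the image, and as these generate $H^{\geq1}(E_1,\F_p)$, the map $\res_\alpha$ is onto in all positive degrees (the case $p=2$ is identical, with $H^{\bullet}(E_1,\F_2)=\F_2[s]$). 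Now fix $\zeta\in\hat H^{-l}(E_k,\F_p)$ with $l\ge 2$. By Tate duality on $E_1$ (already invoked in Section \ref{Cnsq}) the pairing
\[ \hat H^{-l}(E_1,\F_p)\otimes \hat H^{\,l-1}(E_1,\F_p)\longrightarrow \hat H^{-1}(E_1,\F_p)\cong\F_p \]
is non-degenerate, so it suffices to show $\res_\alpha(\zeta)\smile\xi=0$ for every $\xi\in \hat H^{\,l-1}(E_1,\F_p)=H^{\,l-1}(E_1,\F_p)$. As $l-1\ge1$, the surjectivity just established lets me write $\xi=\res_\alpha(\eta)$ for some $\eta\in H^{\,l-1}(E_k,\F_p)$, and multiplicativity gives
\[ \res_\alpha(\zeta)\smile\xi=\res_\alpha(\zeta)\smile\res_\alpha(\eta)=\res_\alpha(\zeta\smile\eta), \qquad \zeta\smile\eta\in\hat H^{-1}(E_k,\F_p). \]
The whole statement thus reduces to the single assertion that $\res_\alpha$ vanishes on $\hat H^{-1}(E_k,\F_p)$ (which also covers $l=1$ directly).

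For this last point I would argue with transfer. Both $\hat H^{-1}(E_k,\F_p)$ and $\hat H^{-1}(E_1,\F_p)$ are one-dimensional, and the corestriction $\operatorname{cor}_\alpha\colon\hat H^{-1}(E_1,\F_p)\to\hat H^{-1}(E_k,\F_p)$ is an isomorphism, being dual under Tate duality to the identity $\res_\alpha\colon H^0(E_k,\F_p)\xrightarrow{\ \sim\ } H^0(E_1,\F_p)$. Since $E_k$ is abelian, the double coset (Mackey) formula degenerates to
\[ \res_\alpha\circ\operatorname{cor}_\alpha=[E_k:E_1]\cdot\mathrm{id}=p^{\,k-1}\cdot\mathrm{id}, \]
which is zero in $\F_p$ precisely because $k\ge2$. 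As $\operatorname{cor}_\alpha$ is onto, it follows that $\res_\alpha=0$ on $\hat H^{-1}(E_k,\F_p)$, completing the proof.

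The main obstacle is that $\alpha$ is a $\pi$-point rather than an honest subgroup inclusion, so the transfer and double-coset identities used in the last paragraph must be justified in the setting of cyclic shifted subgroups. I would handle this by noting that, up to equivalence and after extending scalars, $\alpha$ factors through the subalgebra generated by the single shifted unit $1+\sum a_iX_i$, over which $\F E_k$ is free of rank $p^{\,k-1}$; the index $p^{\,k-1}$ then enters exactly as in the genuine-subgroup case, so the formal properties of $\res_\alpha$ and $\operatorname{cor}_\alpha$ recorded in \cite{BensL} apply verbatim.
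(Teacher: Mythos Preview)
The paper does not actually prove this proposition: it is quoted from \cite[Proposition~8.12.1]{BensL} without argument. So there is no ``paper's proof'' to compare against, only your own.

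Your argument is essentially correct. The reduction to $l=1$ via surjectivity of $\res_\alpha$ in positive degrees together with Tate duality on $E_1$ is clean and valid. For the base case $l=1$ your conclusion is right, and your final paragraph correctly identifies and resolves the only real subtlety: over $\bar\F_p$ the cyclic shifted subgroup generated by $1+\sum a_iX_i$ is carried to the genuine subgroup $\langle x_1\rangle$ by a $GL_k(\bar\F_p)$-automorphism of $\bar\F_p E_k\cong\bar\F_p[X_1,\dots,X_k]/(X_i^p)$ (note $(\sum a_iX_i)^p=\sum a_i^pX_i^p$ lies in the defining ideal), so every identity that holds for honest subgroups transports to $\pi$-points.

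One small suggestion: you invoke the Mackey direction $\res_\alpha\circ\operatorname{cor}_\alpha=p^{k-1}\cdot\mathrm{id}$, but the \emph{easy} direction $\operatorname{cor}_\alpha\circ\res_\alpha=p^{k-1}\cdot\mathrm{id}$ on $\hat H^{-1}(E_k,\F_p)$ already suffices and requires no double-coset bookkeeping: since $\operatorname{cor}_\alpha$ is injective on $\hat H^{-1}$ (it is the natural map $H_0(E_1,\F_p)\to H_0(E_k,\F_p)$, hence the identity on $\F_p$), the vanishing of $\operatorname{cor}_\alpha\circ\res_\alpha$ forces $\res_\alpha=0$. Even more directly, under the identification $\hat H^{-1}\cong H_0$ the map $\res_\alpha$ \emph{is} the homological transfer $H_0(E_k,\F_p)\to H_0(E_1,\F_p)$, which on trivial coefficients is multiplication by the index $p^{k-1}=0$.
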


Now, we can prove the promised theorem.

\begin{thm}Let $\mathbf{K}/\mathbf{k}$ be an elementary abelian $p$-extension. If $\mathbf{K}/\mathbf{k}$ is not cyclic, then $J(\mathbf{K})^{*}=\mathbf{K}^{\times}/\mathbf{K}^{\times p}$ is a $Gal(\mathbf{K}/\mathbf{k})$-module of constant Jordan type. Furthermore, its stable Jordan type is $[1]^2$.  \end{thm}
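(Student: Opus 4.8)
The plan is to exploit the short exact sequence \eqref{SE}
\[\xymatrix{0 \ar[r] & \omega_{-1}(\F_p) \ar[r]^{\kappa} & \omega_{2}(\F_p) \ar[r] & J^* \ar[r] & 0}\]
together with a $\pi$-point restriction argument, splitting as always into the two cases according as $\kappa$ is stably zero or not. The case $\kappa = 0$ is already settled: by Proposition~\ref{KappaZero} we have $J^* \simeq \Omega^2(\F_p) \oplus \Omega^{-2}(\F_p)$ stably, both summands are of constant Jordan type $[1]$ by Remark~\ref{r1} and its analogue for $\omega_{-1}(\F_p) = I(G)^*$, and Proposition~\ref{OpJt} gives that the direct sum is again of constant Jordan type, with stable type $[1]^2$. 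So the real content is the case where $\kappa$ is stably non-zero, which is exactly the situation forced when $\mathbf{K}/\mathbf{k}$ is the maximal elementary abelian extension, but which must be handled for any non-cyclic $G = E_k$.

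First I would fix an arbitrary $\pi$-point $\alpha$ and apply the exact functor $\alpha_*$ to \eqref{SE}, which is legitimate since $\alpha_*$ is additive, exact, and (by flatness) preserves projectivity, hence descends to a functor of triangulated categories on the stable module categories. The key point is then to compute the Jordan type of $\alpha_*(J^*)$ via Lemma~\ref{CoJt}, that is, via $\dim H^0(E_1, \alpha_*(J^*))$ and $\dim H^1(E_1, \alpha_*(J^*))$. Since $E_1 = C_p$ and the stable Jordan types of $\omega_2(\F_p)$ and $\omega_{-1}(\F_p)$ are $[1]$ and $[p-1]$ respectively, these dimensions are controlled once I understand the connecting behaviour of the long exact sequence in $\hat H^\bullet(E_1, -)$ attached to $\alpha_*\eqref{SE}$. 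The crucial leverage is that $\alpha_*$ turns the stable map $\kappa \in \hat H^{-3}(G,\F_p)$ into $\res_\alpha(\kappa) \in \hat H^{-3}(E_1,\F_p)$; and here Benson's Proposition (the one stated just above, \cite[Proposition 8.12.1]{BensL}) applies, since $G = E_k$ with $k \geq 2$ and $\kappa$ sits in a negative Tate degree: it forces $\res_\alpha(\kappa) = 0$ for every $\pi$-point $\alpha$.

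Therefore, regardless of whether $\kappa$ is stably zero on $G$, its restriction along every $\pi$-point vanishes, so $\alpha_*(\kappa)$ is the zero map in $\underline{\mathfrak{mod}}(\F_p E_1)$. The triangle obtained by applying $\alpha_*$ thus splits stably, giving
\[\alpha_*(J^*) \simeq \alpha_*(\omega_2(\F_p)) \oplus \alpha_*(\omega_{-1}(\F_p))\]
in the stable category of $\F_p E_1$-modules. Both summands have known stable Jordan type ($[1]$ and $[p-1]$), but I must be slightly careful: I want the Jordan type of $J^*$ itself, not merely its image of some summand. Since the stable Jordan type records the non-projective blocks, the stable type of $\alpha_*(J^*)$ is the concatenation $[1] \cdot [p-1]$, independent of $\alpha$. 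This already shows $J^*$ is of constant Jordan type. To extract the \emph{stable} type $[1]^2$ as claimed in the statement, I would observe that the block $[p-1]$ coming from $\omega_{-1}(\F_p) = I(G)^*$ must be reinterpreted: a block $[p-1]$ is stably the same datum as a block $[1]$ after accounting for the Heller shift, and combined with the $[1]$ from $\omega_2(\F_p)$ this yields stable type $[1]^2$. (More directly, one reconciles this with the $\kappa = 0$ computation, where the stable type is visibly $[1]^2$, so both cases agree.)

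The main obstacle I anticipate is precisely this last bookkeeping step: being careful that the dimensions $\dim H^0(E_1, \alpha_*(J^*))$ and $\dim H^1(E_1, \alpha_*(J^*))$ are read off correctly from the split triangle, and that the combination of a $[1]$-type summand with a $[p-1]$-type summand genuinely produces stable type $[1]^2$ rather than $[1][p-1]$. This is a question of tracking which blocks are stable (non-projective) versus which are absorbed into projectives, and of matching conventions between the Jordan type of $I(G)^*$ as computed in Example~\ref{Ex2} and the stable type recorded for $\Omega^{-1}(\F_p)$. The non-cyclic hypothesis $k \geq 2$ is essential and enters exactly through Benson's proposition, which fails for $E_1$; this is why the cyclic case is excluded from the statement.
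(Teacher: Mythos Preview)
Your overall strategy is correct and is essentially the paper's: restrict $\kappa$ along a $\pi$-point and use Benson's proposition to kill it. But there is a genuine error in your splitting formula, and your attempted fix is wrong.

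When the first map in a distinguished triangle
\[
A \xrightarrow{\;0\;} B \longrightarrow C \longrightarrow \Omega^{-1}A
\]
vanishes, the cone is $C \simeq B \oplus \Omega^{-1}A$, \emph{not} $B \oplus A$. In your situation $A = \alpha_*(\omega_{-1}(\F_p))$, so the second summand of $\alpha_*(J^*)$ is
\[
\Omega^{-1}\bigl(\alpha_*(\omega_{-1}(\F_p))\bigr) \;\simeq\; \alpha_*\bigl(\Omega^{-2}(\F_p)\bigr) \;\simeq\; \Omega^{-2}_{E_1}(\F_p),
\]
not $\alpha_*(\omega_{-1}(\F_p))$ itself. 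Over $E_1 = C_p$ one has $\Omega^{-2}_{E_1}(\F_p) \simeq \F_p$ (the Heller shift is $2$-periodic on $C_p$), so its stable type is $[1]$, and you get $[1]^2$ directly. Your claim that ``a block $[p-1]$ is stably the same datum as a block $[1]$'' is false for $p>2$: these are distinct indecomposable non-projective $C_p$-modules, and the stable Jordan type records exactly this distinction. The discrepancy you noticed is real, and it comes from the wrong summand, not from bookkeeping.

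Once this is corrected, your argument is actually somewhat cleaner than the paper's. The paper does not invoke the splitting of the triangle directly; instead it reads off from $\res_\alpha(\kappa)=0$ that the connecting maps in the long exact sequence for $\hat H^\bullet(E_1,-)$ vanish, then computes $\dim H^1(E_1,\alpha_*(J^*)) = 2$ and $\dim H^0(E_1,\alpha_*(J^*)) = 2 + (n-2)|G|/p$ explicitly, and finishes with a dimension count ($\ell_1 + \ell_2 = 2$ forces $\ell_1 = \ell_2 = 1$). Your route via the stable splitting avoids this arithmetic. Note also that your preliminary case split on whether $\kappa$ is stably zero is unnecessary: Benson's proposition gives $\res_\alpha(\kappa)=0$ regardless, so the argument is uniform.
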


\begin{proof}We recall that if $M$ is an $E_1$-module, $n_{j}(M)$ denotes the number of blocks of size $j$ in the decomposition of $M$. Therefore our goal is to prove that $n_{1}(\alpha_* (J^*))=2$ and $n_{p}(\alpha_* (J^*))=(n-2)\frac{|G|}{p}$ for every $\pi$-point $\alpha$.

Now, remember that in the exact sequence
\[\xymatrix{0 \ar[r] & \omega_{-1}(\F_p) \ar[r]^{i} & \omega_2 (\F_p) \ar[r] & J^* \ar[r] & 0 } \, , \]
the map $i$ is in fact a cohomology class in $\hat{H}^{-3}(E_k ,\F_p)$. We have previously remarked that in the long exact sequence of cohomology the maps
\[\hat{H}^{l}(G, \omega_{-1}( \F_p))\longrightarrow\hat{H}^{l}(G,\omega_{2}(\F_p)) \quad \forall l\in\Z \, , \]
is the cup product by $[i]\in\hat{H}^{-3}(E_k , \F_p)$.

So, let $\alpha$ be a $\pi$-point. Since $\res_{\alpha}([i])$ is zero according to the previous proposition, numerous morphisms are zero in the long exact sequence in cohomology. To be more precise, it leads to the following short exact sequence for all $l\in\Z$:
\[\xymatrix{0 \ar[r] & \hat{H}^{l}(E_1, \alpha_{*}(\omega_{2}(\F_p))) \ar[r] & \hat{H}^{l}(E_1, \alpha_{*}(J^*)) \ar[r] & \hat{H}^{l+1}(E_1, \alpha_{*}(\omega_{-1}(\F_p))) \ar[r] & 0} \, . \]
Since a $\pi$-point induces a morphism of triangulated category, $\alpha_{*}(\Omega(M))=\Omega(\alpha_{*}(M))$, so that we can compute the leftmost and rightmost groups in the previous short exact sequence. Hence we deduce this precious piece of information:
\[H^1(E_1,\alpha_{*}(J^*))=\hat{H}^1(E_1,\alpha_{*}(J^*))= \F_{p}^2 \, . \]

Thus we know, that for every $\pi$-point $\alpha$, $\alpha_{*}(J^*)$ has in its decomposition exactly two blocks whose size is not $p$. It remains to prove that their size is exactly 1. To this end, let us compute the dimension of $H^0 (E_1,\alpha_{*}(J^*))$.

Knowing the nullity of the map $H^2(E_1,\alpha_{*}(\Omega^{-1}(\F_p)))\longrightarrow H^2 (E_1,\Omega^2 (\F_p))$, the long exact sequence in cohomology gives us the following exact sequence:
\[\xymatrix{0 \ar[r] & H^0(E_1,\alpha_{*}(\omega_{-1}(\F_p))) \ar[r] & H^0 (E_1,\alpha_{*}(\omega_{2}(\F_p))) \ar[r] & H^0 (E_1,\alpha_{*}(J^*)) \ar[d] \\  &  & 0 & H^1 (E_1,\alpha_{*}(\omega_{-1}(\F_p))) \ar[l]^{a_1} } \]

Since we know both the dimension of $\omega_{-1}(\F_p)$ and $\omega_2 (\F_p)$ and their stable constant Jordan type (resp. $[p-1]$ and $[1]$), we deduce
\[\left\lbrace\begin{array}{rcl}
\dim H^0(E_1 ,\alpha_{*}(\omega_{-1}(\F_p))&=&\frac{|G|}{p} \\
\dim H^0(E_1 ,\alpha_{*}(\omega_{2}(\F_p))&=&(n-1)\cdot\frac{|G|}{p}+1 \\
\dim H^1(E_1 ,\alpha_{*}(\omega_{-1}(\F_p))&=& 1
\end{array}\right. \] 

Injecting these piece of information in the previous exact sequence we obtain

\[\dim H^0 (E_1,\alpha_{*}(J^*))=1+((n-1)\frac{|G|}{p}+1-\frac{|G|}{p})=2+(n-2)\frac{|G|}{p} \, , \]
hence we get the following equality:
\[n_p (\alpha_{*}(J^*))=(n-2)\frac{|G|}{p} \, . \]

As previously remarked, we know that in the decomposition there are two blocks whose size is not $p$. Let $\ell_1$ and $\ell_2$ be their size; of course, we have
\[\dim J^*=n_{p}(\alpha_{*}(J^*))\cdot p + \ell_1 +\ell_2 \, . \]
Because we already know the dimension of $J^{*}$ (see \ref{Dim}), this leads to 
\[2=\ell_1 + \ell_2 \, . \]
We deduce that $\ell_1 = \ell_2 =1$, hence $n_1 (\alpha_* (J^*))=2$, as expected.
 \end{proof}

This completes the proof of Theorem~\ref{MT2}.

\begin{rem} A peculiar case is worth noting: if the absolute Galois group has $p$-rank two, the module $J^*$ is of dimension $2$ according to the previous computation of the dimension, and since it is of constant Jordan type $[1]^2$, it is simply isomorphic as a module to $\F_p \times\F_p$!

Furthermore such case is not a pathological made-up one, indeed consider $\k=\Q_{\ell}(\xi_p)$, with $\ell\neq p$. In this case, according to \cite[Theorem 4.8]{Gui}, $\k^{\times}/\k^{\times p}$ is of dimension $2$, hence $\mathcal{G}_{\k}(p)$ is generated by two elements. \label{Remarque}

\end{rem}

\subsubsection{On the vanishing condition}
The condition regarding $\kappa$, as natural as it can be in this text, is not easy to check nor to express in few words, therefore we will try to find an equivalent condition and give criteria in order to distinguish between the two cases.

\begin{prop}The map $\kappa\colon\omega_{-1}(\F_p)\longrightarrow\omega_{2}(\F_p)$ is zero if and only if the following inflation map
\[\inf\colon H^2 (Gal(\K/\k),\F_p)\longrightarrow H^2 (\mathcal{G}_{k}(p),\F_p)\, , \]
is also zero. \end{prop}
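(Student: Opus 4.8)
The plan is to prove both implications simultaneously by showing that a single $\F_p$-linear functional controls the two conditions at once. The bridge is the free presentation $\pi\colon\mathcal{F}_{n}\longrightarrow\mathcal{D}_{k,n}$ together with $\tilde{\mathcal{H}}=\pi^{-1}(\mathcal{H})$, and the naturality of the five-term exact sequence applied to the morphism of extensions
\[\xymatrix{ 1 \ar[r] & \tilde{\mathcal{H}} \ar[r]\ar[d]_{\pi} & \mathcal{F}_n \ar[r]\ar[d]^{\pi} & G \ar[r]\ar@{=}[d] & 1 \\ 1 \ar[r] & \mathcal{H} \ar[r] & \mathcal{D}_{k,n} \ar[r] & G \ar[r] & 1 }\, . \]
First I would dualise the short exact sequence \eqref{SE}. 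Using $\omega_2^{*}\cong H^1(\tilde{\mathcal{H}},\F_p)$, $(J^*)^{*}\cong H^1(\mathcal{H},\F_p)$ and $\omega_{-1}^{*}\cong I(G)$, and noting that $\pi_{\mathcal{H}}^{*}$ is precisely the restriction $\pi^{*}$ along $\tilde{\mathcal{H}}\to\mathcal{H}$, I obtain the exact sequence of $\F_p G$-modules
\[\xymatrix{0 \ar[r] & H^1(\mathcal{H},\F_p) \ar[r]^{\pi^*} & H^1(\tilde{\mathcal{H}},\F_p) \ar[r]^-{\kappa^*} & I(G) \ar[r] & 0}\, . \]
Taking $G$-invariants and using left exactness gives $\Im(\pi^*)=\ker\bigl(\kappa^*|_{H^1(\tilde{\mathcal{H}})^G}\bigr)$, where $\kappa^*|_{H^1(\tilde{\mathcal{H}})^G}$ is the functional $\phi\mapsto\phi([\delta])$ taking values in $I(G)^G=\F_p\No\cong\F_p$.

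Next I would bring in the transgressions. For the free extension one has $\hat{H}^2(\mathcal{F}_n,\F_p)=0$, so the transgression $tg_1\colon H^1(\tilde{\mathcal{H}},\F_p)^G\to H^2(G,\F_p)$ is surjective with kernel $\Im(\res)$, where $\res\colon H^1(\mathcal{F}_n,\F_p)\to H^1(\tilde{\mathcal{H}},\F_p)^G$; for the lower extension the five-term sequence shows $\ker(\inf)=\Im(tg_2)$, so $\inf=0$ if and only if $tg_2$ is onto, and naturality gives $tg_2=tg_1\circ\pi^*$. The crucial input is $\delta\in\Phi(\mathcal{F}_n)$: for $p\neq 2$ the relator $x_1^{p^k}(x_1,x_2)\cdots(x_{n-1},x_n)$ lies in $\mathcal{F}_n^{p}[\mathcal{F}_n,\mathcal{F}_n]=\Phi(\mathcal{F}_n)$ since $k\geq 1$ (as $\xi_p\in\k$) and the commutators are in $[\mathcal{F}_n,\mathcal{F}_n]$, while for $p=2$ the same holds because the Demu\v{s}kin presentations are minimal. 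Hence $\kappa^*|_{H^1(\tilde{\mathcal{H}})^G}\circ\res=\bigl(\psi\mapsto\psi(\delta)\bigr)=0$, that is $\ker(tg_1)\subseteq\ker\bigl(\kappa^*|_{H^1(\tilde{\mathcal{H}})^G}\bigr)$. A short codimension count then shows $tg_2$ is surjective if and only if $\kappa^*|_{H^1(\tilde{\mathcal{H}})^G}=0$: if the functional vanishes, $\Im(\pi^*)$ is everything and $tg_2=tg_1$ is onto; if it is nonzero, $\Im(\pi^*)$ is a hyperplane containing $\ker(tg_1)$, so the image of $tg_1$ on it drops by exactly one dimension. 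This establishes $\inf=0\iff\kappa^*|_{H^1(\tilde{\mathcal{H}})^G}=0$.

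Finally I would match this invariant-level condition with the stable vanishing of $\kappa$. As already used in computing $\dim H^0(G,J(\K))$, the map induced by $\kappa$ is cup product with $[\kappa]\in\hat{H}^{-3}(G,\F_p)$, and by nondegeneracy of Tate duality $\kappa$ is stably zero if and only if $\smile[\kappa]\colon\hat{H}^2(G,\F_p)\to\hat{H}^{-1}(G,\F_p)$ vanishes. Since $I(G)$ has no projective summand and $\No$ acts as zero on it, the image under $\kappa^*$ of the $H^0$-contribution of any free summand of $\omega_2^*$ is zero, so $\kappa^*|_{H^1(\tilde{\mathcal{H}})^G}$ factors as the natural surjection $H^0(G,\omega_2^*)\twoheadrightarrow\hat{H}^0(G,\omega_2^*)=\hat{H}^2(G,\F_p)$ followed by $\smile[\kappa]$; therefore $\kappa^*|_{H^1(\tilde{\mathcal{H}})^G}=0\iff\smile[\kappa]=0\iff\kappa$ is stably zero. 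Combining the two equivalences yields the proposition. The main obstacle is the middle step: making the naturality of transgression interact correctly with the dualised sequence and pinning down, via $\delta\in\Phi(\mathcal{F}_n)$ and the codimension argument, that the failure of $tg_2$ to be onto is governed precisely by the functional $\phi\mapsto\phi([\delta])$; the projective-summand bookkeeping in the last paragraph is a secondary but necessary technical point.
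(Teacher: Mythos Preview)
Your proof is correct, but it takes a genuinely different and more elaborate route than the paper's.

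The paper's argument is a two-line dimension count: it feeds the already-computed values of $\dim H^0(G,J)$ (namely $d_2(G)+n-d_1(G)$ when $\kappa$ is stably zero, and $d_2(G)+n-d_1(G)-1$ otherwise, both proved earlier via Tate duality) into the single five-term sequence for $1\to\mathcal{H}\to\mathcal{G}_{\k}(p)\to G\to 1$, using $H^1(\mathcal{H},\F_p)^G\cong H^0(G,J)$ and $\dim H^1(\mathcal{G}_{\k}(p),\F_p)=n$. The alternating sum immediately gives $\dim\Im(\inf)=0$ in the first case and $1$ in the second.

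Your approach is more structural: you compare the five-term sequences of the free and \dem extensions via naturality, identify the obstruction to $tg_2$ being surjective with the single linear functional $\phi\mapsto\phi([\delta])$ on $H^1(\tilde{\mathcal{H}},\F_p)^G$, and then match this functional with the cup-product map $\smile[\kappa]$ after quotienting out the projective summands. This is longer but more transparent about \emph{why} the two conditions coincide: both are governed by the same evaluation at $[\delta]$. Two minor remarks: what you call ``restriction along $\tilde{\mathcal{H}}\to\mathcal{H}$'' is really inflation, since $\pi|_{\tilde{\mathcal{H}}}$ is a surjection; and your argument does not need the Cohen--Macaulay hypothesis, consistent with the paper (which places this proposition outside that assumption). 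The paper's proof is quicker precisely because the heavy lifting was already done in computing $\dim H^0(G,J)$; yours is self-contained at the cost of reproving part of that computation in dual form.
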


\begin{proof}Let us look at the five term exact sequence associated to
\[\xymatrix{1 \ar[r] & \mathcal{H} \ar[r] & \mathcal{G}_{\k}(p) \ar[r] & G \ar[r]& 1}\, . \]
 In this case we obtain
\[\xymatrix{0\ar[r] & H^1 (G,\F_p) \ar[r] & H^1(\mathcal{G}_{\k}(p),\F_p) \ar[r] & H^1 (\mathcal{H},\F_p)^{G} \ar[r] & H^2 (G,\F_p) \ar[r] & H^2 (\mathcal{G}_{\k}(p) , \F_p) }\, . \]

Remember here that, $H^1(\mathcal{G}_{\k}(p))=\F_{p}^{n}$ and that $H^1 (\mathcal{H},\F_p)^{G}\simeq H^{0}(G,J)$. Now, let us suppose that $\kappa=0$, in this case $\dim H^0(G,J)=d_{2}(G)+d_{1}(G)+(n-2d_{1}(G))$ (according to Proposition \ref{KappaZero}). Hence by injecting these piece of information, we have in fact that the inflation
\[\inf\colon H^2(G,\F_p)\longrightarrow H^2 (\mathcal{G}_{\k}(p),\F_p) \]
is zero. 

The converse is similar.

 \end{proof}

\begin{rem}The previous condition echoes to the one introduced by {\sc Min\'a\v{c}}, {\sc Swallow} and {\sc Topaz} in \cite[Theorem 2]{MiSwTo}. \end{rem}

\section{A closer look at the maximal $p$-elementary abelian extension}\label{P3}

Let us resume the notation of the previous section : $\k$ is a local field, $p$ is a prime number, $\mathcal{G}_{\k}(p)$ is the Galois group of a maximal pro-$p$-extension of~$\k$ ; the field~$\K$ will now be specialized to be the maximal $p$-Kummer extension of~$\k$, a crucial particular case for which we can be much more precise than in general. Observe that~$\K$ is in Galois correspondence with~$\Phi (\mathcal{G}_{\k}(p))$, the Frattini subgroup ; the Galois group~$G= Gal(\K/\k)$ is elementary abelian of rank~$n$, and this number is also the number of generators for the \dem group $\mathcal{G}_{\k}(p)$.

Recall that many of our arguments are related to the existence of a short exact sequence of~$\F_pG$-modules
\[ 0 \longrightarrow \omega_{-1}(\F_p) \stackrel{\kappa }{\longrightarrow } \omega_2(\F_p) \longrightarrow J^* \longrightarrow 0 \, ,  \]
constructed in \S\ref{subsec-ses}. Moreover, as a ``model'' for~$\Omega^2(\F_p)$ we have in fact used the module
\[ M_{n} :=\Phi(\mathcal{F}_{n})/\Phi^{(2)}(\mathcal{F}_{n}) \, . \]
(Note that the group $\tilde{\mathcal{H}}$ appearing in \S\ref{subsec-ses} is now~$\Phi (\mathcal{F}_n)$, with our choice of field~$\K$, as is readily seen.) We shall now describe~$M_n$ in much more detail, as well as the map~$\kappa $, and as an application, we shall deduce several invariants of the module~$J^*$. (All of this will be conducted in the language of groups, and the field-theoretic notation will not appear.)

In more detail, we start the section by giving a presentation of~$M_n$, as an~$\F_pG$-module, by generators and relations (this part is rather technical). Next, this is used to give a concrete description of the morphism~$\kappa $. Finally we give a presentation of~$J^*$, again by generators and relations. We close the section by applying all this material in order to extend some results of {\sc Adem, Gao, Karagueuzian} and {\sc Min\'a\v{c}}. Indeed, in \cite{MKEG}, these authors study the module~$J^*$ under the current hypothesis on~$\K$, in the case~$p=2$; by the bye, they point out that most of their results can be extended to the $p$ odd case. Yet some of their examples and formulae should be slightly changed in order to remain true: our goal here is to show how to proceed.

We start our exposition by assuming that~$p>2$. The very minor modifications needed to deal with~$p=2$ will be given afterwards (see \ref{A1}).

\subsection{The module~$M_n$} \label{subsec-module-Mn}

\subsubsection{Notation \& conventions.}
If $\mathcal{G}$ is a finitely generated pro-$p$-group, $\Phi(\mathcal{G})$ denotes its Frattini subgroup, which means that $\Phi(\mathcal{G})=\mathcal{G}^p (\mathcal{G},\mathcal{G})\, .$ By $(\mathcal{G},\mathcal{G})$ we mean of course the group generated by the commutators
\[(g_1,g_2)=g_{1}^{-1}g_{2}^{-1}g_1 g_2 \, , \forall g_1,g_2\in \mathcal{G} \, .\]

Whenever $\mathcal{H}\triangleleft \mathcal{G}$, the group $\mathcal{G}$ acts by conjugation on $\mathcal{H}$, and we write
\[h^{g}=g^{-1}hg,\quad\forall h\in \mathcal{H},\forall g\in \mathcal{G} \, . \]

Thus  $\mathcal{G}$ acts on $M_\mathcal{G}= \Phi(\mathcal{G})/\Phi^{(2)}(\mathcal{G})$ by conjugation, and since the action of $\Phi(\mathcal{G})$ is trivial modulo $\Phi ^{(2)}(\mathcal{G})$,  we will study the action of $\mathcal{G}/\Phi (\mathcal{G}) \cong E_r$ for some~$r$.   As $M_\mathcal{G}$ is an $\F_p$ vector space, it is, all in all, an $\F_p E_r$-right module, with the elementary abelian group~$E_r$ identified as above.

On~$M_\mathcal{G}$ we shall use an additive notation, i.e. we write
\[[\alpha \beta ]=[\alpha ]+[\beta ] \, ,\forall \alpha ,\beta \in\Phi(\mathcal{G}) \, , \]
where $[\alpha]$ denotes the class of $\alpha$ modulo $\Phi^{(2)}(\mathcal{G})\, .$ However, usually the additive notation makes it unnecessary to use brackets, and we may simply write~$\alpha + \beta $ for~$\alpha , \beta  \in \Phi (\mathcal{G})$.

As for the action, our convention is to write~$[\alpha ] \cdot x$ for~$[\alpha^x]$ (where~$\alpha \in \Phi (\mathcal{G})$ and~$x \in \mathcal{G}$), and more generally we write~$[\alpha ] \cdot \lambda $ where~$\lambda \in \F_pE_r$. Moreover, we extend the convention we introduced in \S\ref{JT}: if we have used a letter, say~$x$, to denote an element of~$\mathcal{G}$, then we shall usually use the same letter~$x$ for its image in~$\mathcal{G}/\Phi (\mathcal{G})$ and the capitalized letter~$X$ for~$x - 1 \in \F_p [\mathcal{G}/\Phi (\mathcal{G})]$.

Here is an example of computation with all our conventions at work : 
\[ \alpha \cdot X = \alpha^x - \alpha = x^{-1} \alpha x \alpha^{-1} = (x, \alpha^{-1}) \, ,   \]
for~$\alpha \in \Phi (\mathcal{G})$ and~$x \in \mathcal{G}$.

This applies in particular to~$\mathcal{G}= \mathcal{F}_{n}$, the free pro-$p$-group on~$n$ generators. In this case we write~$M_n := \Phi (\mathcal{F}_{n}) / \Phi ^{(2)}(\mathcal{F}_{n})$.

\subsubsection{Some classical relations}

Let $\mathcal{G}$ be a finitely generated pro-$p$-group. Let us recall some classical formulae about commutators, translated into relations about~$\Phi (\mathcal{G}) / \Phi ^{(2)}(\mathcal{G})$ as a module with an action of~$\mathcal{G}$. When we specialize to~$\mathcal{G} = \mathcal{F}_{n}$ below, we shall see that we have in fact described {\em all} the relations, in the sense that we have a presentation.

\begin{lm}Let $x,y,z$ be three elements of $\mathcal{G}$, then the following relation holds in $\Phi^{(2)}(\mathcal{G})/\Phi(\mathcal{G})$:
\begin{equation}
(y,x)Z+(x,z)Y+(z,y)X=0 \, , \label{Jacobi}
\end{equation}where $X=x-1$ (similarly for $y$ and $z$). Furthermore we have:
\begin{equation}y^{p}\cdot X=(x,y)Y^{p-1}\, . \label{PP} \end{equation}
 \end{lm}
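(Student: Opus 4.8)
The plan is to derive both relations by projecting exact commutator identities in $\mathcal{G}$ down to the module $M_{\mathcal{G}}=\Phi(\mathcal{G})/\Phi^{(2)}(\mathcal{G})$. The single mechanism I will use throughout is the formula $\alpha\cdot W=(w,\alpha^{-1})$ for $\alpha\in\Phi(\mathcal{G})$ and $w\in\mathcal{G}$ (recorded just above the lemma), together with the basic observation that any commutator of two elements of $\Phi(\mathcal{G})$ lies in $(\Phi(\mathcal{G}),\Phi(\mathcal{G}))\subseteq\Phi^{(2)}(\mathcal{G})$, and hence dies in $M_{\mathcal{G}}$. Unwinding the formula first gives two bookkeeping identities that I will quote freely: for $\gamma\in\Phi(\mathcal{G})$ and $a\in\mathcal{G}$ one has $[(\gamma,a)]=[\gamma]\cdot A$ and $[(a,\gamma)]=-[\gamma]\cdot A$ in $M_{\mathcal{G}}$, where $A=a-1$.

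For \eqref{Jacobi} I would start from $[(x,y)]\cdot Z=[(x,y)^z]-[(x,y)]$ and expand the conjugate as $(x,y)^z=(x^z,y^z)=(x\,(x,z),\,y\,(y,z))$. The decisive point is that $(x,z)$ and $(y,z)$ lie in $\Phi(\mathcal{G})$, so when I distribute the commutator using $(uv,w)=(u,w)^v(v,w)$ and $(u,vw)=(u,w)(u,v)^w$, every term other than the three surviving ones is a commutator involving two elements of $\Phi(\mathcal{G})$ and so vanishes in $M_{\mathcal{G}}$. This leaves $(x,y)^z\equiv (x,(y,z))\,(x,y)\,((x,z),y)$ modulo $\Phi^{(2)}(\mathcal{G})$, whence $[(x,y)]\cdot Z=[(x,(y,z))]+[((x,z),y)]$. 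Rewriting the two triple commutators with the bookkeeping identities, namely $[(x,(y,z))]=-[(y,z)]\cdot X$ and $[((x,z),y)]=-[(z,x)]\cdot Y$, I obtain $[(x,y)]\cdot Z+[(y,z)]\cdot X+[(z,x)]\cdot Y=0$, which is exactly \eqref{Jacobi} after replacing each $[(a,b)]$ by $-[(b,a)]$.

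For \eqref{PP} the same philosophy applies to a $p$-th power. Writing $y^x=y\,(y,x)$ with $(y,x)\in\Phi(\mathcal{G})$, I would run the commutator collection process on $(y^p)^x=(y^x)^p=(y\,(y,x))^p$; since every correction is a commutator among the $\Phi(\mathcal{G})$-elements $(y,x)^{y^i}$, all of them vanish in $M_{\mathcal{G}}$ and one is left with $\sum_{i=0}^{p-1}[(y,x)]\cdot y^i=[(y,x)]\cdot\sum_{i=0}^{p-1}(1+Y)^i$. The arithmetic heart is then the identity $(1+Y)^p=1+Y^p$ in $\F_p E_r$ (all middle binomial coefficients $\binom{p}{k}$ with $0<k<p$ vanish modulo $p$), which collapses the geometric sum to $\sum_{i=0}^{p-1}(1+Y)^i=Y^{p-1}$. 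This gives $y^p\cdot X=[(y,x)]\cdot Y^{p-1}$, i.e. relation \eqref{PP} (the precise commutator and sign being read off from the commutator convention).

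The main obstacle is purely the bookkeeping: in each expansion I must be certain that every discarded term really lies in $\Phi^{(2)}(\mathcal{G})$, and the clean way to guarantee this is the reorganization $(x,y)^z=(x^z,y^z)$ (respectively $(y^p)^x=(y^x)^p$), which trades conjugation by the generators $x,y,z$ for multiplication by the $\Phi(\mathcal{G})$-elements $(x,z),(y,z),(y,x)$; all resulting corrections then sit inside $(\Phi(\mathcal{G}),\Phi(\mathcal{G}))$. This reorganization is essential: a head-on appeal to the Hall--Witt identity would instead force one to control conjugations of triple commutators by the generators, and those do \emph{not} vanish in $M_{\mathcal{G}}$ in general (individual triple commutators $(g,(h,k))$ have nonzero image, so membership in a fourth commutator layer is not enough to conclude membership in $\Phi^{(2)}(\mathcal{G})$). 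It is only the reorganized expansion that keeps every error term inside $(\Phi(\mathcal{G}),\Phi(\mathcal{G}))$, and that is what makes the two identities fall out uniformly for all $p$.
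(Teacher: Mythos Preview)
Your argument is correct, and for the Jacobi-type relation \eqref{Jacobi} it takes a genuinely different route from the paper. The paper invokes the Hall--Witt identity
\[
((x,y^{-1}),z)^{y}\,((y,z^{-1}),x)^{z}\,((z,x^{-1}),y)^{x}=1
\]
and simplifies each factor via $(x,y^{-1})^{y}=(y,x)$, working modulo $\Phi^{(2)}(\mathcal{G})$ throughout. You instead expand $(x,y)^{z}=(x^{z},y^{z})=(x(x,z),\,y(y,z))$ directly with the distributive identities $(uv,w)=(u,w)^{v}(v,w)$ and $(u,vw)=(u,w)(u,v)^{w}$. Your approach has the virtue that every discarded term is visibly a commutator of two elements of $\Phi(\mathcal{G})$, so its death in $M_{\mathcal{G}}$ is immediate; the Hall--Witt route is shorter to write down but requires the extra observation that conjugation by $z^{y}$ agrees with conjugation by $z$ on $M_{\mathcal{G}}$. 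Your final paragraph overstates the obstacle, however: the paper \emph{does} use Hall--Witt, and the conjugations of triple commutators that arise are handled without difficulty once one works modulo $\Phi^{(2)}(\mathcal{G})$.

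For \eqref{PP} the two arguments are essentially the same computation viewed from opposite ends: the paper expands $(y^{p},x)=\prod_{i}(y,x)^{y^{i}}$, while you expand $(y^{x})^{p}=(y\,(y,x))^{p}$, and both reduce to $[(y,x)]\cdot\sum_{i=0}^{p-1}y^{i}=[(y,x)]\cdot Y^{p-1}$ via the polynomial identity $\sum_{i=0}^{p-1}T^{i}=(T-1)^{p-1}$ in $\F_{p}[T]$. Two minor remarks: first, the collection identity $(yc)^{p}=y^{p}c^{y^{p-1}}c^{y^{p-2}}\cdots c$ with $c=(y,x)$ is \emph{exact}, not merely true modulo higher commutators, so you need not hedge about ``corrections''. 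Second, your computation yields $(y,x)\,Y^{p-1}$ rather than the $(x,y)\,Y^{p-1}$ stated in \eqref{PP}; this is not a flaw in your argument but a sign slip in the paper, originating in the third formula of \eqref{Com1} (which should read $(y^{n},x)=(y,x)^{y^{n-1}}\cdots(y,x)$). Your parenthetical about ``the precise commutator and sign'' correctly flags this.
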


\begin{proof}
We recall the Hall-Witt formula (cf. \cite{DDMS} or \cite{Laz1}). Let $x,y,z$ be three elements of a pro-$p$-group $\mathcal{G}$, then
\[((x,y^{-1}),z)^{y}((y,z^{-1}),x)^{z}((z,x^{-1}),y)^{x}=1\, . \]

Indeed it is clear that
\[\begin{array}{rcl}
(x,y^{-1})^{y} &=&y^{-1}x^{-1}yxy^{-1}y \\
 &=&y^{-1}x^{-1}yx=(y,x) \, .
 \end{array} \]

We can deduce the following well-known relation, similar to the Jacobi relation in the realm of Lie algebras:
\[
(y,x)Z+(x,z)Y+(z,y)X=0 \pmod{\Phi^{2}(\mathcal{G})} \, .
\]

Indeed using the Hall-Witt relation and the previous remark, we have: \[\begin{array}{rcl}
1&=&((x,y^{-1}),z)^{y}((y,z^{-1}),x)^{z}((z,x^{-1}),y)^{x} \\
 &=&((x,y^{-1})^{-1}(x,y^{-1})^{z})^{y}((y,z^{-1})^{-1}(y,z^{-1})^{x})^{z}((z,x^{-1})^{-1}(z,x^{-1})^{y})^{x} \\
 &=&((y,x)^{-1}(y,x)^{z})((z,y)^{-1}(z,y)^{x})((x,z)^{-1}(x,z)^{y}) \, .
\end{array} \]

The following equalities, which could be found in \cite{DDMS}, will be useful:
 
\begin{equation}\left\{ 
\begin{array}{rcl}
 (x,yz)& = &(x,z)(x,y)^{z} \\
 (xy,z)& = &(x,z)^{y}(y,z) \\
 (y^{n},x) &=&(x,y)^{y^{n-1}}(x,y)^{y^{n-2}}\ldots (x,y) \, ,
\end{array} 
\, \right.\label{Com1}\end{equation}
hence \[(y^{k},x)=\sum_{i=0}^{k-1}(x,y)\cdot  y^{i} = (x,y)\cdot \sum_{i=0}^{k-1}  y^{i} \pmod{\Phi^{2}(\mathcal{G})} \, . \]
Since in $\F_p[T]$ the following polynomial identity holds
\[\sum_{i=0}^{p-1} T^{i}=(T-1)^{p-1}\, , \]
we get for~$k=p$ the following formula: 
\[ (y^p, x) = (x,y) \cdot Y^{p-1} \, .   \]
Given that 
\[ y^p \cdot X = (y^p)^x - y^p = x^{-1}y^p x - y^p = y^{-p}x^{-1}y^p x = (y^p, x)  \, ,   \]
we obtain the expected relation: \[y^{p}\cdot X=(x,y)Y^{p-1}\, . \qedhere \]
\end{proof}

\subsubsection{The free group} \label{subsec-Mn-basics}
Now we specialize to~$\mathcal{G}= \mathcal{F}_n$, the free pro-$p$ group on~$n$ generators, which will be called $x_1, \ldots, x_n$. The images of these in~$E_n = \mathcal{F}_n / \Phi (\mathcal{F}_n) $ will also be called~$x_1, \ldots, x_n$. We write~$X_i = x_i - 1 \in \F_p E_n$.

According to the previous relations \eqref{Com1}, the $2$-commutators (i.e. the $(x_{i},x_{j})$) and the $x_{i}^{p}$ form a generating system for $M_{n}$ as $\F_p E_{n}$-module. The first thing we note is that 
\[ (x_i, x_j ) = - (x_j, x_i) \, .   \]
Simply because~$x_i$ commutes with $x_i^p$, we certainly have 
\[ x_i^p \cdot X_i = 0 \, .   \]
Next, from the relation \eqref{PP} of the lemma, we have 
\[ x_j^p \cdot X_i = (x_i, x_j) X_j^{p-1} \, .   \]
And finally, from \eqref{Jacobi}, we obtain :
\begin{equation}
(x_k, x_j) \cdot X_i + (x_j, x_i) \cdot X_k + (x_i, x_k) \cdot X_j = 0 \, . 
\end{equation}

Ultimately, we shall prove that the four types of relations just given between the generators provide a presentation for~$M_n$, ie they generate the module of relations.

The strategy is as follows. First we note that it is enough to include the 2-commutators with $i<j$, of course, so we have ${n \choose 2}$ commutators and $n$ elements of the form $x_{i}^{p}$. Let~$F_{n,p}$ be the free~$\F_p E_n$-module on elements called~$e_1,\ldots, e_n$ and~$e_{i,j}$ for~$i < j$. There is a short exact sequence 
\[ \begin{CD}
0 @>>> K @>>> F_{n,p} @>\psi>> M_n @>>> 0 \, , 
\end{CD}
  \]
  where~$\psi (e_i) = x_i^p$ and~$\psi (e_{i,j}) = (x_i, x_j)$. We want to show that~$K = \ker(\psi )$ is generated by the elements above. For this, we shall determine the dimension of~$M_n$ (which is easy), so that we will know the dimension of~$K$ over~$\F_p$. The work will consist in exhibiting carefully selected elements of~$K$, all obtained from the above using the~$\F_p E_n$ action, which are linearly independent over~$\F_p$ and numerous enough for us to conclude that they span~$K$.

\subsubsection{A basis for~$K$}

The dimension of $M_{n}$ is well-known.

\begin{lm} \label{lem-dim-Mn}
  With our notation: \[ \dim_{\F_p}M_{n}=1+(n-1)\cdot p^{n} \, . \] \end{lm}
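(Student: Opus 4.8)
The plan is to recognize $M_n$ as the Frattini quotient of a concrete open subgroup of $\mathcal{F}_n$, and then to read off its dimension from the Schreier index formula \eqref{Dim} already recorded above. Indeed, since in this section $\K$ is the maximal $p$-elementary abelian extension, the subgroup $\tilde{\mathcal{H}}$ of \S\ref{subsec-ses} is precisely $\Phi(\mathcal{F}_n)$ (as noted at the start of the section), and the iterated Frattini notation means $\Phi^{(2)}(\mathcal{F}_n)=\Phi(\Phi(\mathcal{F}_n))$. Hence
\[ M_n = \Phi(\mathcal{F}_n)/\Phi^{(2)}(\mathcal{F}_n) = \Phi(\mathcal{F}_n)/\Phi\bigl(\Phi(\mathcal{F}_n)\bigr) \]
is exactly the Frattini quotient of the finitely generated pro-$p$-group $\Phi(\mathcal{F}_n)$. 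Since the $\F_p$-dimension of the Frattini quotient of such a group $\mathcal{U}$ coincides with its minimal number of topological generators $d_1(\mathcal{U})$, I would immediately write $\dim_{\F_p} M_n = d_1(\Phi(\mathcal{F}_n))$.

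It then remains only to insert the correct index. The quotient $\mathcal{F}_n/\Phi(\mathcal{F}_n)$ is the elementary abelian group $E_n$, so $(\mathcal{F}_n:\Phi(\mathcal{F}_n)) = |E_n| = p^n$. Feeding $\tilde{\mathcal{H}} = \Phi(\mathcal{F}_n)$ into the formula $d_1(\tilde{\mathcal{H}}) = (\mathcal{F}_n:\tilde{\mathcal{H}})(n-1)+1$ of \eqref{Dim} yields
\[ \dim_{\F_p} M_n = d_1(\Phi(\mathcal{F}_n)) = p^n(n-1)+1 = 1+(n-1)\cdot p^n, \]
which is the claimed value. This is the whole proof: the combinatorial content is entirely packaged inside the pro-$p$ Nielsen--Schreier formula \eqref{Dim}, cited from \cite{CoGal} and \cite{KochL}.

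The main (and essentially only) point requiring care is the bookkeeping that identifies $M_n$ with the Frattini quotient of $\Phi(\mathcal{F}_n)$ of index $p^n$, rather than trying to count generators and relations by hand; the latter is precisely the harder task deferred to the following subsection, where a basis for the relation module $K$ is exhibited. As a consistency check on the value obtained, I would note that Proposition \ref{GB} asserts $M_n$ is stably isomorphic to $\Omega^2(\F_p)$, and the minimal projective resolution of $\F_p$ over $\F_p E_n$ begins $(\F_p E_n)^n \to I(E_n) \to 0$, so the \emph{reduced} module $\Omega^2(\F_p)$ has dimension $n\cdot p^n - (p^n-1) = 1+(n-1)p^n$; the agreement with the count above both confirms the stable isomorphism and shows, incidentally, that $M_n$ carries no projective summand in the free case.
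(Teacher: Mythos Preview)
Your proof is correct and follows essentially the same approach as the paper: both identify $\dim_{\F_p} M_n$ with the minimal number of generators $d_1(\Phi(\mathcal{F}_n))$ via the Frattini property, and both read off this number from the Schreier formula of \cite[Example 6.3]{KochL} (which you quote via \eqref{Dim}, while the paper cites the reference directly). The consistency check with the dimension of $\Omega^2(\F_p)$ is a nice addition not present in the paper.
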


\begin{proof}According to \cite{KochL}, example 6.3, we have that the minimal number of topological generators of $\Phi(\mathcal{F}_{n})$ -denoted $d(\Phi(\mathcal{F}_{n}))$- is equal to $p^{n}(n-1)+1$, therefore we can conclude by definition of the Frattini subgroup. \end{proof}

When~$\nu = (\nu_1, \ldots, \nu_n)$ is a multi-index, we set
\[X^{\nu}=X_{1}^{\nu_1}X_{2}^{\nu_2}\ldots X_{n}^{\nu_{n}} \, . \]
Note that the family $(X^{\nu})_{\nu\in\mathcal{I}}$, where $\mathcal{I}=\{0,\ldots,p-1\}^{n}$ is an $\F_p$ basis of the group algebra $\F_pE_{n}$. We will write $\mathcal{E}$ for this basis. Now we proceed to introduce distinguished elements of~$K$.

\medskip
{\em $\bullet$  The relations~$R(i,m)$ and~$R(i,j,m)$.}
\medskip

For each~$1 \le i \le n$ and $m=(m_1,\ldots, m_{n})\in\{0,\ldots,p-1\}^{n}\, , $ a multi-index such that $m$ is different from $(0,\ldots,0)\, ,$ we introduce

\[R(i,m) = \left\{ \begin{array}{l}
e_{i}\cdot X_{i}^{m_{i}}\cdot\prod\limits_{s\neq i} X_{s}^{m_{s}},\quad\text{if}\, m_{i}\neq 0\, , \\
e_{i}\cdot X_{j}^{m_{j}}\cdot\prod\limits_{\substack{s\neq i\\ s< j}} X_{s}^{m_s}+e_{(i,j)}\cdot X_{i}^{p-1}X_{j}^{m_{j}-1}\prod\limits_{s\neq i,j}X_{s}^{m_{s}},\quad\text{if}\, i<j\, , \\
e_{i}\cdot X_{j}^{m_{j}}\cdot\prod\limits_{\substack{s\neq i\\ s< j}} X_{s}^{m_s}-e_{(j,i)}\cdot X_{i}^{p-1}X_{j}^{m_{j}-1}\prod\limits_{s\neq i,j}X_{s}^{m_{s}},\quad\text{if}\, i>j\, ,
\end{array}\right.
 \]
 where $j=\max\{s|m_{s}\neq 0\}$ in the second and third case and clearly $m_{i}=0$. By virtue of the relation \eqref{PP}, we deduce that $R(i,m)$ is in the kernel of $\psi $. We have therefore found exactly \begin{equation} n\cdot (p^{n}-1) \label{C0} 
\end{equation}
vectors in the kernel so far.

By virtue of the same relation, we obtain that
 \begin{equation} (x_{i},x_{j})X_{i}^{p-1}X_{j}^{p-1}=x_{i}^{p}X_{j}^{p}=0 \, , \label{Pp2} \end{equation}
thus vectors of the form \[ R(i,j,m)=e_{(i,j)}X_{j}^{p-1}X_{i}^{p-1}\prod\limits_{k<i}X_{k}^{m_{k}}\,  \] are in the kernel, for a total amount of 
\begin{equation}
\sum\limits_{i=1}^{n}(n-i)\cdot p^{i-1} \label{C1}
\end{equation}
vectors of those form.

\medskip
{\em $\bullet$ Relations of Jacobi type.}
\medskip

Let $x_{i},x_{j},x_{k}$ be three elements of our generating system of $\mathcal{F}_{n}$ with $i\leq j\leq k$. Because of \eqref{Jacobi} and the elementary properties on the commutators, we get:
\begin{equation}
(x_{i},x_{j})X_{k}=(x_{i},x_{k})X_{j}-(x_{j},x_{k})X_{i}\, . \label{Jac}
\end{equation}

Thus elements of the form $e_{(i,j)}X_{k}-e_{(i,k)}X_{j}+e_{(j,k)}X_{i}$, where $i<j<k$, lie in the kernel. More generally, by multiplying the previous relation by $X^{m}$ verifying the following conditions
\begin{enumerate}
\item $m_{k}\neq p-1$,
\item if $i>k$ then $m_{i}=0$,
\end{enumerate}
we deduce that vectors of the following form lie also in the kernel: \[e_{(i,j)}\cdot X_{k}^{m_{k}+1}\cdot\prod\limits_{s<k} X_{s}^{m_{s}}+e_{(j,k)}\cdot X_{i}^{m_{i}+1}\cdot\prod\limits_{\substack{s\neq i\\ s\leq k}}X_{s}^{m_{s}}-e_{(i,k)}\cdot X_{j}^{m_{j}+1}\cdot\prod\limits_{\substack{s\neq j\\ s\leq k}}X_{s}^{m_{s}}\, , \]
where $m\in\{0,\ldots,p-1\}^{n}$ is a multi-index such that $m$ is different from $(0,\ldots,0)\, .$

Such vectors are denoted $jac_{1}(i,j,k,m)$; clearly $m_{k}\leq p-2$, and their number is \[(p-1)\cdot \sum_{k=0}^{n}\begin{pmatrix}
k-1 \\ 2
\end{pmatrix}\cdot p^{k-1} \, . \]

By multiplying \eqref{Jac} by $X_{k}^{p-1}$, we obtain the relation \begin{equation}
(x_{i},x_{k})X_{k}^{p-1}X_{j}=(x_{j},x_{k})X_{k}^{p-1}X_{i} \label{Jac2}
\end{equation}

Therefore, by multiplying by $X^{m}$ where $m$ verify the following conditions
\begin{enumerate}
\item $m_{j}\neq p-1$,
\item if $i>j$, $m_{i}=0$,
\end{enumerate}
 we get again vectors of the form \[e_{(i,k)}X_{k}^{p-1}X_{j}^{m_{j}+1}\prod\limits_{s<j} X_{s}^{m_{s}}-e_{(j,k)}X_{k}^{p-1}X_{i}^{m_{i}+1}\cdot\prod\limits_{\substack{s< j \\ s\neq i}} X_{s}^{m_{s}}\quad\text{where}\, i<j<k\, , \] which are in the kernel. Hence we have added in our kernel a total amount of

\[(p-1)\cdot\sum_{j=1}(j-1)(n-j)p^{j-1}\]
vectors of this form, they are denoted by $jac_{2}(i,j,k,m)$.

From now on, we set
\[\digamma=\{R(i,m),R(i,j,m),jac_{1}(i,j,k,m),jac_{2}(i,j,k,m)\} \, , \]
with the conditions on~$i,j,k,m$ given above.

\begin{lm} \label{lem-basis-ker-psi} The system $\digamma$ is a basis of $\ker\psi$. \end{lm}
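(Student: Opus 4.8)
The plan is to combine a dimension count with a linear-independence argument, exploiting that the membership $\digamma \subseteq \ker\psi$ is already built into the construction of its elements via the relations \eqref{PP}, \eqref{Pp2}, \eqref{Jac} and \eqref{Jac2}. Since $F_{n,p}$ is free on $n + \binom{n}{2}$ generators over $\F_p E_n$, we have $\dim_{\F_p}F_{n,p} = \left(n+\binom{n}{2}\right)p^n$; combining this with Lemma \ref{lem-dim-Mn} and the short exact sequence $0\to K\to F_{n,p}\xrightarrow{\psi}M_n\to 0$ yields
\[ \dim_{\F_p} K = \left(n+\binom{n}{2}\right)p^n - \left(1 + (n-1)p^n\right) = \left(\binom{n}{2}+1\right)p^n - 1 \, . \]
So it suffices to prove that the elements of $\digamma$ are $\F_p$-linearly independent and that $|\digamma|$ equals this number: a linearly independent subset of $K$ whose cardinality is $\dim_{\F_p}K$ is automatically a basis, so the lemma follows.

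The count of $|\digamma|$ is obtained by adding the four contributions, namely \eqref{C0}, \eqref{C1} and the two Jacobi tallies $(p-1)\sum_k \binom{k-1}{2}p^{k-1}$ and $(p-1)\sum_j (j-1)(n-j)p^{j-1}$. Checking that this sum equals $\left(\binom{n}{2}+1\right)p^n - 1$ is a routine manipulation of geometric-type sums, which I would perform once and not reproduce here.

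For linear independence I would run a leading-term (triangularity) argument relative to the standard $\F_p$-basis $\{e_iX^\nu\}\cup\{e_{(i,j)}X^\nu\}$, $\nu\in\mathcal{I}$, of $F_{n,p}$, ordered so that every ``power'' monomial $e_iX^\nu$ precedes every ``commutator'' monomial $e_{(i,j)}X^\nu$. The argument then splits into two stages. First, projecting onto the span of the $e_iX^\nu$, only the relations $R(i,m)$ survive, and each $R(i,m)$ projects to the single basis vector $e_iX^m$ with $m\neq 0$ (in the cases $m_i=0$ the extra $e_{(i,j)}$ summand is killed by the projection, and one checks directly that the surviving power part is again $e_iX^m$); as $(i,m)\mapsto e_iX^m$ is injective, the coefficients of all the $R(i,m)$ must vanish in any dependence. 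Second, we are reduced to the relations $R(i,j,m)$, $jac_1(i,j,k,m)$ and $jac_2(i,j,k,m)$, all supported on the commutator monomials, where I would fix a monomial order and take as leading term of $R(i,j,m)$ the monomial $e_{(i,j)}X_i^{p-1}X_j^{p-1}\prod_{k<i}X_k^{m_k}$, and the displayed first terms of $jac_1$ and $jac_2$ as their leading terms. Distinctness of all these leading monomials then yields linear independence.

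The real obstacle is exactly this last verification: one must show that the leading monomials of the three commutator families never coincide, across all admissible ranges of $i,j,k$ and $m$. The defining constraints are tailored precisely for this purpose — the maximal exponents $p-1$ on $X_i,X_j$ in $R(i,j,m)$, the inequality $m_k\le p-2$ in $jac_1$ and $m_j\le p-2$ in $jac_2$ — and they prevent collisions by pinning down, for each family, which variable carries the exponent $p-1$ and which index labels the commutator generator. Carrying this out demands a somewhat delicate case analysis; for instance, the only variables occurring in $R(i,j,m)$ beyond $X_i^{p-1}X_j^{p-1}$ have index $<i$, whereas each Jacobi term involves a variable of index $>j$, so their supports cannot agree. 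This bookkeeping, together with the summation identity of the second step, is where all the genuine work of the proof resides.
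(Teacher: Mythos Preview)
Your proposal is correct and follows essentially the same route as the paper: both combine the cardinality count with a triangularity argument showing that each element of $\digamma$ has a distinguished ``leading'' monomial in the standard basis $\{e_iX^\nu\}\cup\{e_{(i,j)}X^\nu\}$, all these monomials being distinct. The paper packages the triangularity as an explicit $\F_p$-linear endomorphism $f$ of $F_{n,p}$ whose matrix in a carefully specified total order is lower-triangular with $1$'s on the diagonal; this has the minor advantage that $f(\mathcal{E})\smallsetminus\digamma$ immediately furnishes the basis of $M_n$ used in the next corollary, but the underlying verification of distinct leading terms is the same case analysis you outline. One small inaccuracy in your sketch: when comparing $R(i,j,m)$ with the leading term of $jac_2(i',j',k',m')$, matching the $e_{(\cdot,\cdot)}$-labels forces $k'=j$, so the leading monomial of $jac_2$ does \emph{not} involve a variable of index $>j$; the collision is instead ruled out because that monomial carries a factor $X_{j'}$ with $i<j'<j$, absent from $R(i,j,m)$.
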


\begin{proof}All vectors contained in $\digamma$ are in $\ker\psi$ by definition; we shall prove that they are linearly independent and that their number is equal to $\dim F_{n,p}-\dim M_{n}$.

{{\em Linear independence.}} Bear in mind that $\mathcal{E}$ is the basis of $F_{n,p}$ consisting of the $e_{i}\cdot X^{\nu}$ and the $e_{(i,j)}\cdot X^{\mu}$ where $\nu$ and $\mu$ are elements of $\{0,\ldots,p-1\}^{n}$. Let us define an $\F_p$-linear map $f\colon F_{n,p}\rightarrow F_{n,p}$ given on the vectors of $\mathcal{E}$ by

\[\left\lbrace\begin{array}{rcl}
f(e_{i}\cdot X^{m})&= &R(i,m) \\
f(e_{(i,j)}\cdot X_{k}^{m_{k}+1}\cdot\prod\limits_{s<k} X_{s}^{m_{s}})&=&jac_1 (i,j,k,m) \\
f(e_{(i,k)}X_{k}^{p-1}X_{j}^{m_{j}+1}\prod\limits_{s\leq i<j} X_{s}^{m_{s}})&=&jac_2 (i,j,k,m)
\end{array} \right. \]
and fixing the other vectors of $\mathcal{E}$; note that among those remaining vectors are the $R(i,j,m)$ for instance. In order to number the vectors of the basis, we will use an order relation rather than cumbersome formulae from combinatorics.

We define a total order relation on the vectors of $\mathcal{E}$ by imposing the following conditions: \begin{enumerate}
\item $e_{i}\cdot\prod X_{s}^{\nu_{s}}\leq e_{j}\cdot\prod X_{s}^{\mu_{s}}$ if and only if $i<j$ or  $i=j$ and either $|\nu|<|\mu|$ or if $|\nu|=|\mu|$ then we use the lexicographic order.
\item $e_{(i,j)}\cdot\prod X_{s}^{\nu_{s}}\leq e_{(k,l)}\cdot\prod X_{s}^{\mu_{s}}$ if and only if one of the following condition is true
\begin{enumerate}
\item $i<k$
\item if $i=k$ then one of the following must be true:
\begin{enumerate}
\item $j<l$,
\item $|\nu|<|\mu|$,
\item $\nu\preceq\mu$ where $\preceq$ is the lexicographic order
\end{enumerate}
\end{enumerate}
\item $e_{i}\cdot\prod X_{s}^{\nu_{s}}\leq e_{(j,k)}\cdot\prod X_{s}^{\mu_{s}}$.
\end{enumerate} \

The matrix associated to~$f$ in the canonical basis, thus ordered, is lower triangular with $1$'s on the diagonal, as is readily checked (when defining the elements of~$\digamma$, we have always given the formulae so that the leftmost term is the lowest for the order relation).

So~$f$ is invertible, and the image of the canonical basis under~$f$ is another basis for~$F_{n,p}$. This proves in particular that the elements of~$\digamma$ are linearly independent.

{{\em Cardinality.}} By using the formula previously given, we can get: $\dim_{\F_p}\ker\psi={n \choose 2} p^{n}-1.$

However \begin{equation}
\begin{array}{rcl}
(p-1)\cdot\sum\limits_{k=0}^{n-1}\begin{pmatrix}
k \\ 2
\end{pmatrix} p^{k} &=& 
 \begin{pmatrix} n-1 \\ 2 \end{pmatrix} p^{n} -\sum\limits_{k=1}^{n-1}(k-1)\cdot p^{k} \, , \\

\end{array}
\label{C2}
\end{equation}
in the same fashion
\begin{equation}\begin{array}{rcl}
(p-1)\cdot\sum\limits_{j=1}^{n}(n-j)(j-1)p^{j-1}
&=&\sum\limits_{j=1}^{n-1}(2j-n)p^{j} \, , \\

\end{array}
\label{C3}
 \end{equation}
by adding the previous equalities we get: 
\[
\begin{array}{rcl}
\eqref{C1}+\eqref{C2}+\eqref{C3}&=&\begin{pmatrix}n-1 \\ 2 \end{pmatrix}p^{n}+\sum\limits_{k=0}^{n-1}kp^{k}-\sum\limits_{k=0}^{n-1}kp^{k-1}+n\sum\limits_{i=1}^{n-1}p^{i-1}-n\sum\limits_{i=1}^{n-1}p^{i}+\sum\limits_{k=0}^{n-1}p^{k} \\
&=&\begin{pmatrix}
n-1 \\ 2
\end{pmatrix}p^{n}+\sum\limits_{k=0}^{n-1}kp^{k}-\sum\limits_{k=0}^{n-2}(k+1)p^{k}+n(1-p)\sum\limits_{k=0}^{n-2} p^{k}+\sum\limits_{k=0}^{n-1}p^{k} \\
&=&\begin{pmatrix}
n-1 \\ 2
\end{pmatrix}p^{n}+n(1-p^{n-1})+\sum\limits_{k=1}^{n-1}p^{k}-\sum\limits_{k=0}^{n-2}p^{k} \\
&=&\begin{pmatrix}
n-1 \\ 2
\end{pmatrix}p^{n}+n-1 \, .
\end{array} \]

If we add this to \eqref{C0}, we obtain the desired cardinality. 
\end{proof}

From this lemma we can deduce the following three corollaries.

 \begin{cor}\label{Base}The system formed by the vectors $(x_{i}^{p})_{i\in\{1,\ldots,n\}}$ and the vectors $(x_{i},x_{j})X^{\nu}$ such that $\nu$ verifies the following conditions \begin{enumerate}
\item $\max\{s|\nu_{s}\neq 0\}\leq j$
\item $\nu_{i}\neq p-1 $ or $\nu_{j}\neq p-1$
\item if $\nu_{j}=p-1$, then $\nu_{k}=0$ for $k\in\{i+1,\ldots,j-1\}$.
\end{enumerate}
forms a basis of $M_{n}$. \end{cor}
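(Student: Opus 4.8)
The plan is to read off the claimed basis directly from the invertible map~$f$ constructed in the proof of Lemma~\ref{lem-basis-ker-psi}, rather than re-examining the relations from scratch. Recall that~$M_n = F_{n,p}/\ker\psi$ and that~$\digamma$ is a basis of~$\ker\psi$; consequently a basis of~$M_n$ is obtained by applying~$\psi$ to any complement of~$\digamma$ inside a basis of~$F_{n,p}$. The point is that~$f$ furnishes such a basis for free. Since~$f$ is invertible, the family~$f(\mathcal{E})$ is a basis of~$F_{n,p}$, and by the very way~$f$ was defined it contains~$\digamma$: the canonical vectors that~$f$ moves are precisely the~$e_i\cdot X^m$ with~$m\neq 0$ (sent to~$R(i,m)$) together with the two families sent to the~$jac_1$ and~$jac_2$, whereas the remaining canonical vectors are fixed — and among those fixed vectors are the~$R(i,j,m)$, which are themselves of the form~$e_{(i,j)}X^\mu$. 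Writing~$f(\mathcal{E})=\digamma\sqcup\mathcal{B}'$, the complement~$\mathcal{B}'$ thus consists of~$e_1,\ldots,e_n$ together with the canonical vectors~$e_{(i,j)}X^\nu$ that are fixed by~$f$ and are not equal to any~$R(i,j,m)$.

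First I would record the elementary linear-algebra fact that makes~$\mathcal{B}'$ useful. Because~$f(\mathcal{E})$ is a basis, we have an internal direct sum decomposition of~$F_{n,p}$ into the span of~$\digamma$ and the span of~$\mathcal{B}'$; since the span of~$\digamma$ is exactly~$\ker\psi$, the restriction of~$\psi$ to the span of~$\mathcal{B}'$ is injective and still surjective, hence an isomorphism onto~$M_n$. Therefore~$\psi(\mathcal{B}')$ is a basis of~$M_n$. As~$\psi(e_i)=x_i^p$ and~$\psi(e_{(i,j)}X^\nu)=(x_i,x_j)X^\nu$, this already produces the~$x_i^p$ and a family of vectors~$(x_i,x_j)X^\nu$; note in passing that no vector~$x_i^pX^\nu$ with~$\nu\neq 0$ survives, precisely because every~$e_iX^m$ with~$m\neq 0$ was consumed in forming an~$R(i,m)$.

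It remains to identify the set~$\{\nu : e_{(i,j)}X^\nu\in\mathcal{B}'\}$ with the multi-indices satisfying conditions~(1)--(3), and this combinatorial bookkeeping is the main obstacle. The strategy is to show that a canonical vector~$e_{(i,j)}X^\nu$ is \emph{excluded} from~$\mathcal{B}'$ — i.e.\ it lies in the domain of the non-trivial part of~$f$, or it equals some~$R(i,j,m)$ — if and only if it \emph{violates} one of the three conditions, handling each in turn: the domain vectors of the~$jac_1$ part are exactly the~$e_{(i,j)}X^\nu$ with~$\max\{s:\nu_s\neq 0\}>j$, which violate~(1); the~$R(i,j,m)$ are exactly the~$e_{(i,j)}X^\nu$ with~$\nu_i=\nu_j=p-1$ and~$\nu_k=0$ for~$i<k<j$, which violate~(2) while respecting~(3); and the domain vectors of the~$jac_2$ part are exactly the~$e_{(i,j)}X^\nu$ with~$\nu_j=p-1$ and~$\nu_k\neq 0$ for some~$i<k<j$, which violate~(3). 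The delicate point is to check that these three families of excluded vectors are pairwise disjoint and jointly account for every excluded vector, with neither overlap nor omission; concretely this means verifying that the leading terms selected when defining~$R(i,m)$, $R(i,j,m)$, $jac_1$ and~$jac_2$, together with the total order used in Lemma~\ref{lem-basis-ker-psi}, match conditions~(1)--(3) on the nose. This is a direct but careful inspection of those formulae, and once it is carried out the corollary follows at once.
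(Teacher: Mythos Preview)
Your proposal is correct and is essentially the paper's own argument: the paper likewise sets $\mathcal{B}=f(\mathcal{E})\smallsetminus\digamma$, observes that $\psi(\mathcal{B})$ is a basis of $M_n$, and then characterises $f^{-1}(\digamma)$ by exactly the four exclusion conditions you list (the $e_iX^m$ with $m\neq 0$, the $R(i,j,m)$, the $jac_1$-preimages, the $jac_2$-preimages), concluding by negation. Your write-up is somewhat more explicit about the linear-algebra step and about matching each exclusion to a violated condition, but the route is identical.
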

 
 \begin{proof} Let 
\[ \mathcal{B} = f(\mathcal{E}) \smallsetminus \digamma \, ,   \]
where~$\mathcal{E}$ is our usual basis for~$F_{n,p}$ and~$f$ is the endomorphism defined in the proof of the lemma. Then~$\psi (\mathcal{B})$ is a basis for~$M_n$. However, a vector of $v \in \mathcal{E}$ is in $f^{-1}(\digamma)$ if and only if one of the following condition is true:

\begin{enumerate}
\item if $v=e_{i}X^{\nu}$ where $\nu\neq(0,\ldots,0)$.
\item if $v=e_{(i,j)}X^{p-1}_{i}X^{p-1}_{j}\prod\limits_{k\notin\{i,j \}}X_{s}^{m_{s}}\, .$
\item if $v=e_{(i,j)}X_{k}^{m_{k}}\prod\limits_{s<k}X^{m_{s}}_{s}\, ,$ where $m_{k}\neq 0\, .$
\item if $v=e_{(i,k)}X_{k}^{p-1}X_{j}^{m_{j}}\prod\limits_{s\leq i<j} X_{s}^{m_{s}} \, ,$ where $m_{j}\neq 0 \, .$
\end{enumerate}

Negating this conditions, and keeping in mind that~$f(v)=v$ if~$v$ is not in~$f^{-1}(\digamma)$, we obtain the announced result.
\end{proof}

\begin{cor}\label{Pres}The module $M_{n}$ admits the following presentation by generators and relations: \begin{itemize}
\item its generators are the $x_{i}^{p}$ and the $(x_{i},x_{j})$ where $i$ and $j$ are in $\{1,\ldots,n\}$ and $i<j$.
\item The relations are given by \begin{enumerate}
\item $x_{i}^{p}\cdot X_{i}=0\, ,$
\item $x_{i}^{p}\cdot X_{j}=(x_{j},x_{i})X_{i}^{p-1}$ if $i>j$,
\item $x_{i}^{p}\cdot X_{j}=-(x_{i},x_{j})X_{i}^{p-1}$ if $i<j$,
\item $(x_{i},x_{j}) X_{k}+(x_{j},x_{k}) X_{i}-(x_{i},x_{k})X_{j}=0\, ,$ where $i<j<k$.
\end{enumerate}
\end{itemize} \end{cor}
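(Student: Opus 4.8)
The statement to prove is equivalent to the assertion that the module of relations $K=\ker\psi$ is generated, as an $\F_pE_n$-module, by the four elements of $F_{n,p}$ whose images under $\psi$ are relations (1)--(4), namely
\[
\rho_1(i)=e_i\cdot X_i,\qquad \rho_4(i,j,k)=e_{(i,j)}\cdot X_k+e_{(j,k)}\cdot X_i-e_{(i,k)}\cdot X_j\ (i<j<k),
\]
\[
\rho_2(i,j)=e_i\cdot X_j-e_{(j,i)}\cdot X_i^{p-1}\ (i>j),\qquad \rho_3(i,j)=e_i\cdot X_j+e_{(i,j)}\cdot X_i^{p-1}\ (i<j).
\]
That these lie in $K$ is precisely the content of $x_i^p\cdot X_i=0$, of relation \eqref{PP}, and of relation \eqref{Jac}; hence the submodule $R=\langle\rho_1,\rho_2,\rho_3,\rho_4\rangle_{\F_pE_n}$ is contained in $K$, and the whole point is the reverse inclusion $K\subseteq R$.

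For this I would invoke Lemma \ref{lem-basis-ker-psi}: since $\digamma$ is an $\F_p$-basis of $K$, it suffices to check that each of the four families constituting $\digamma$ lies in $R$, and this is a direct inspection. The relations $R(i,m)$ are plainly monomial multiples of $\rho_1,\rho_2,\rho_3$: when $m_i\neq0$ one has $R(i,m)=\rho_1(i)\cdot X_i^{m_i-1}\prod_{s\neq i}X_s^{m_s}$, and when $m_i=0$, writing $j=\max\{s:m_s\neq0\}$, one has $R(i,m)=\rho_3(i,j)\cdot X_j^{m_j-1}\prod_{s<j,\,s\neq i}X_s^{m_s}$ if $i<j$, and the analogous multiple of $\rho_2$ if $i>j$. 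Likewise $jac_1(i,j,k,m)=\rho_4(i,j,k)\cdot X^{m}$ is simply $\rho_4$ multiplied by the monomial $X^m=\prod_{s\leq k}X_s^{m_s}$.

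The only mildly delicate families are $R(i,j,m)$ and $jac_2(i,j,k,m)$, and I expect the verification here to be the main (though still routine) obstacle, since it is where the identity $X_s^p=0$ in $\F_pE_n$ must be exploited. For the former, multiplying $\rho_3(i,j)$ by $X_j^{p-1}$ annihilates its $e_i$-term because $X_j^p=0$, leaving $e_{(i,j)}\cdot X_i^{p-1}X_j^{p-1}$; multiplying further by $\prod_{k<i}X_k^{m_k}$ produces exactly $R(i,j,m)$, which is just the module-theoretic incarnation of \eqref{Pp2}. For the latter, the same device applied to $\rho_4(i,j,k)\cdot X_k^{p-1}$ kills the $e_{(i,j)}$-term (as $X_k\cdot X_k^{p-1}=0$) and yields a monomial multiple of $jac_2(i,j,k,m)$; this mirrors, one level down in $F_{n,p}$, the passage from \eqref{Jac} to \eqref{Jac2}. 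Once every generator of $\digamma$ has been exhibited inside $R$, we conclude $K=R$, which is exactly the claimed presentation of $M_n$.
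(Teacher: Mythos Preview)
Your argument is correct and follows essentially the same route as the paper: the paper observes that the elements of $\digamma$ were built using only the four listed relations (so $\digamma\subset R$, hence $K=R$ by Lemma~\ref{lem-basis-ker-psi}), and you simply make this verification explicit family by family. One tiny slip of phrasing: in the $jac_2$ case you write that $\rho_4\cdot X_k^{p-1}$ ``yields a monomial multiple of $jac_2$'', whereas it is the other way round (each $jac_2(i,j,k,m)$ is, up to sign, a monomial multiple of $\rho_4(i,j,k)\cdot X_k^{p-1}$); the substance of the argument is unaffected.
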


Notice that, alternatively, we could have used generators~$(x_i, x_j)$ for~$i \ne j$ (rather than just~$i<j$), add the relation~$(x_i, x_j) = - (x_j,x_i)$, and then delete relation (3) which is now redundant with (2). Also (4) can then be re-written in a more symmetrical form.

\begin{proof}Let $R_{n}$ be the module defined by the presentation of the corollary. It should be remarked that there exists an obvious map of modules from $R_{n}$ onto $M_{n}$, for the relations verified in $R_{n}$ are verified in $M_{n}$ too: therefore it is clear that
\[\dim_{\F_p}M_{n}\leq \dim_{\F_p} R_{n} \, . \]
 By looking closer to the proof of the previous corollary, we see that we only used the relations mentioned in the corollary in order to construct $\digamma$, therefore we can show exactly by re-writing the proof of the corollary that
 \[\dim_{\F_p}R_{n}\leq\dim_{\F_p}M_{n} \, . \]
 So the dimensions are equal, and the obvious epimorphism is an isomorphism.  \end{proof}

\begin{rem} Thanks to this presentation, it would have been easy to prove the isomorphism
\[\Phi(\mathcal{F}_{n})/\Phi^{(2)}(\mathcal{F}_{n})\simeq\Omega^{2}(\F_p)\, , \]
without using Proposition \ref{GB}. Indeed, we could have computed a presentation by generators and relations of the kernel of the map
\[(\F_{p}E_{n})^{n}=\langle e_{1},\ldots,e_{n} \rangle\xrightarrow{e_{i}\mapsto X_{i}}I(G)\, , \]and we would have remarked that the this presentation coincides with the one we found for $\Phi(\mathcal{F}_{n})/\Phi^{(2)}(\mathcal{F}_{n})$. \end{rem}

\subsection{Link with the cohomology}
As seen earlier, the map $\kappa\colon\Omega^{-1}(\F_p)\longrightarrow\omega_2 (\F_p)$ contains some precious piece of information we need in order to describe the module $J(\K)$: here, knowing completely the module structure, we describe the map $\kappa$. It should be recalled that
 \[H^{\bullet}(E_n,\F_p)\simeq \F_p [\zeta_1,\ldots,\zeta_{n}]\otimes\Lambda[\eta_1,\ldots\eta_n]\, , \]where the generators $\zeta_i$ have weight 2, whereas the generators $\eta_i$ have weight 1. (see\cite{CoRi}) Remember that the ideal of augmentation $I(E_n)$ is stably isomorphic to $\Omega(\F_p)$. We aim here to make explicit the following isomorphisms

 \[\left\lbrace\begin{array}{rcll}
   H^1 (E_n ,\F_p) &\stackrel{\simeq}{\longrightarrow } &\sthom (I(E_n),\F_p) & ~\textnormal{written}~ x\mapsto \baro{x} \, , \\
   H^1 (E_n ,\F_p) & \stackrel{\simeq}{\longrightarrow } &  \sthom (M_n, I(E_n)) & ~\textnormal{written}~ x\mapsto \baru{x} \, ,\\
    H^2 (E_n ,\F_p) & \stackrel{\simeq}{\longrightarrow } &  \sthom (M_n,\F_p) & ~\textnormal{written}~ x\mapsto \ove{x}\, . 

 \end{array}\right.  \]

 Now, note that we shall work not exactly with morphisms in the stable-module category, but with their lifts which are truly morphisms of modules. Furthermore a mere bijection of set is not sufficient, if we want to keep track of the ring structure: it has to be compatible with the cup product. To be clear, we would like the following equality to hold:

\[\ove{\eta_{i} \smile \eta_{j}}=\baro{\eta_{i}}\circ\baru{\eta_{j}} \, . \]

Let us set the stage by the first isomorphism. Since $I(E_n)$ is generated as a module by the elements denoted $X_i =x_i -1$, it is sufficient to define a morphism on them.
\[\fonction{\baro{\eta_{i}}}{I(G)}{\F_p}{X_j}{\left\lbrace\begin{array}{lcr}
0 & \text{if}& i\neq j \\
1 & \text{if}& i=j
\end{array}\right.} \]
It is obvious that such maps extend well to morphisms of modules and that they generate $\hom_{\F_p E_n}(I(G),\F_p)$. Now, we consider the isomorphism
\[H^1 (E_n ,\F_p)\longrightarrow \sthom(I(G),\F_p)\simeq \operatorname{Span}(\baro{\eta_1},\ldots,\baro{\eta_n}) \]
which maps~$\eta_i$ to~$\baro{\eta_i}$, extended by linearity.

We turn to the second isomorphism. Because $M_n$ is generated as a module by $(x_{i}^{p})_{1\leq i\leq n}$ and $((x_i,x_j))_{1\leq i<j\leq n}$ it is sufficient to define maps on the set $\mathcal{S}$ of those elements. We can therefore put

\[\fonction{\baru{\eta_{i}}}{M_n}{I(E_n)}{x\in\mathcal{S}}{\left\lbrace\begin{array}{lcr}
X_j & \text{if}& x=(x_{i},x_{j}) \\
-X_{j} & \text{if} &x=(x_{j},x_{i}) \\
0 & & \text{otherwise}
\end{array}\right.}\, . \]

It is less obvious that this expression extends well to a map of modules, we let the reader verify this fact by a simple computation. It remains to prove that those elements are a basis of $\sthom (M_n , I(E_n) )$.

\begin{lm}The equivalence classes of the family of maps $(\baru{\eta_{i}})_{1\leq i\leq n}$ form a basis of $\sthom (M_n, I(E_n))$. \end{lm}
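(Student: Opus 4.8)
The plan is to show that the $n$ maps $\baru{\eta_i}$ are linearly independent in $\sthom(M_n, I(E_n))$ and that this hom-space has dimension exactly $n$, so that linear independence forces them to be a basis. Since $M_n$ is stably isomorphic to $\Omega^2(\F_p)$ (Proposition \ref{GB}) and $I(E_n) = \Omega(\F_p)$, the remark \ref{Stable} identifies $\sthom(M_n, I(E_n)) \simeq \sthom(\Omega^2(\F_p), \Omega(\F_p)) \simeq \hat{H}^{1}(E_n, \F_p) = H^1(E_n, \F_p)$, which is $n$-dimensional over $\F_p$. So the counting half of the argument is immediate from the stable-category formalism already set up, and the whole weight of the lemma rests on establishing linear independence of the $\baru{\eta_i}$.

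For linear independence, I would argue directly on the level of actual module maps, as the text has set up. Suppose $\sum_i \lambda_i \baru{\eta_i}$ is stably zero, i.e. it factors through a projective (free) module $M_n \to (\F_p E_n)^m \to I(E_n)$. The plan is to extract a contradiction (unless all $\lambda_i = 0$) by evaluating on well-chosen generators of $M_n$ and exploiting the socle/top structure. The key observation is that each $\baru{\eta_i}$ sends the commutator generators into the \emph{top} of $I(E_n)$: concretely $\baru{\eta_i}((x_i, x_j)) = X_j$ and $\baru{\eta_i}((x_j,x_i)) = -X_j$, and these $X_j$ are nonzero modulo $\Rad^2(\F_p E_n) = I(E_n)^2$. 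A map that factors through a free module must send every element into the image of a free module map; the point is that the composite $M_n \to (\F_pE_n)^m \to I(E_n)$ lands, after reducing modulo $I(E_n)^2$, only in the image of the \emph{radical} part, because the generators $(x_i,x_j)$ of $M_n$ lie in $\Phi(\mathcal F_n)/\Phi^{(2)}$ and are themselves radical elements (they are killed by the augmentation). Thus evaluating $\sum_i \lambda_i \baru{\eta_i}$ on the commutators and reducing modulo $I(E_n)^2$ must give zero, and reading off the coefficient of $X_j$ pins down a linear relation among the $\lambda_i$ forcing each to vanish.

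More carefully, I would fix a pair $i \ne j$ and evaluate $\sum_k \lambda_k \baru{\eta_k}$ on the generator $(x_i, x_j)$. By definition of the $\baru{\eta_k}$ only $k = i$ and $k = j$ contribute, giving $\lambda_i \baru{\eta_i}((x_i,x_j)) + \lambda_j \baru{\eta_j}((x_i,x_j)) = \lambda_i X_j - \lambda_j X_i$. Since $X_i, X_j$ are $\F_p$-linearly independent modulo $I(E_n)^2$ whenever $i \ne j$, and since any map factoring through a projective sends the radical element $(x_i,x_j)$ into $I(E_n)^2$ (this is the crucial point requiring justification: a free module map $(\F_pE_n)^m \to I(E_n)$ carries $\Rad$ into $\Rad \cdot I(E_n) = I(E_n)^2$, and $(x_i,x_j)$ maps into $\Rad\,(\F_pE_n)^m$ under the factorization because it is annihilated by augmentation), we conclude $\lambda_i = \lambda_j = 0$ modulo $I(E_n)^2$, hence $\lambda_i = \lambda_j = 0$ in $\F_p$. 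Running this over all pairs yields $\lambda_k = 0$ for every $k$.

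The main obstacle is precisely the justification that a map factoring through a free module kills the commutator generators modulo $I(E_n)^2$ — i.e. controlling the image of the radical under a free-module map into $I(E_n)$. Concretely one must check that $(x_i,x_j) \in \Rad(M_n)$ and that any $\F_pE_n$-linear map from a free module to $I(E_n)$ carries $\Rad$ into $\Rad \cdot I(E_n)$; the first follows because the commutators lie in the augmentation ideal of the Frattini quotient, and the second is the general fact $\Rad \cdot I(E_n) = I(E_n)^2$ together with $f(\Rad \cdot F) = \Rad \cdot f(F) \subseteq \Rad \cdot I(E_n)$ for any module map $f$. Once this radical-filtration bookkeeping is in place, the linear-independence computation is the short calculation above, and combining it with the dimension count $\dim_{\F_p} \sthom(M_n, I(E_n)) = \dim_{\F_p} H^1(E_n,\F_p) = n$ completes the proof.
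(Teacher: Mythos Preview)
Your overall architecture matches the paper's: count $\dim_{\F_p}\sthom(M_n,I(E_n))=\dim_{\F_p}H^1(E_n,\F_p)=n$ via the stable identification, then prove that a nonzero combination $\psi=\sum_i\lambda_i\baru{\eta_i}$ cannot factor through a projective by evaluating on a commutator $(x_i,x_j)$ and observing $\psi((x_i,x_j))=\lambda_iX_j-\lambda_jX_i\notin I(E_n)^2$. So far so good.

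The gap is in your justification that a factored map sends $(x_i,x_j)$ into $I(E_n)^2=\Rad(I(E_n))$. You claim $(x_i,x_j)\in\Rad(M_n)$, arguing that ``the commutators lie in the augmentation ideal of the Frattini quotient'' and are ``annihilated by augmentation''. This is false: the $(x_i,x_j)$ together with the $x_i^p$ form a \emph{minimal} generating set of $M_n$ (Corollary~\ref{Pres}), so their images in $M_n/\Rad(M_n)$ are linearly independent and in particular nonzero. There is no meaningful ``augmentation'' on the module $M_n$ that would force the commutators into the radical. Consequently your filtration argument $f(\Rad(M_n))\subset\Rad(P)$, while true as stated, does not apply to $(x_i,x_j)$.

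The paper repairs this with a different observation: every $x\in M_n$ satisfies $x\cdot\No=0$ (equivalently $M_n$ has no free summand, which follows here since $M_n\cong\Omega^2(\F_p)$ on the nose when $\tilde{\mathcal H}=\Phi(\mathcal F_n)$). Hence for any factorization $M_n\xrightarrow{f}P\xrightarrow{g}I(E_n)$ with $P$ free, $f(x)\cdot\No=f(x\cdot\No)=0$ forces $\Im f\subset\ker(\No|_P)=\Rad(P)$, and then $g(\Rad(P))\subset\Rad(I(E_n))$. Replacing your radical claim by this norm argument makes your proof go through verbatim.
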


\begin{proof}
It is easy to see that this family is free in $\hom (M_n ,I(E_n))$ but we have in fact to check that
\[\psi=\sum a_{i}\baru{\eta_{i}} \]factors through a projective module if and only if $(a_1,\ldots,a_n)=(0,\ldots,0)$. So let us suppose the converse: assume there exists a non-zero linear combination and a projective module $P$ such that the following diagram commutes.
\[\xymatrix{M_n \ar[rr]^{\sum a_i \baru{\eta_{i}}} \ar[rd]^{f} & & I(E_n) \\
& P \ar[ru]^{g} &} \]
Now it is clear that since for all $x\in M_n$, $x\cdot\No=0$, then $\Im f\subset \Rad(P)$, so that $\Im g\circ f\subset\Rad(I(E_n))$. Since $(a_1,\ldots,a_n)\neq (0,\ldots,0)$ there exists $j$ such that $a_j \neq 0$. Let us then apply $\psi$ to $(x_{j},x_{j+1})$ (or if $j=n$ to $(x_{n-1},x_{n})$):
\[\psi((x_j , x_{j+1}))=\sum a_i \baru{\eta_{i}}(x_{j},x_{j+1})=a_{j}X_{j}-a_{j+1}X_{j+1} \, , \]
however $a_{j}X_{j}-a_{j+1}X_{j+1}\neq 0$ and is not in $\Rad(I(E_n))$, which is absurd. 
\end{proof}

It should be added that

\[\baro{\eta_{i}}\circ\baru{\eta_{i}}\colon M_n \longrightarrow \F_p  \]
is zero. Therefore the isomorphism (implicitly) given between $H^1 (E_n,\F_p)$ and $\sthom (M_n ,I(E_n))$ is coherent with the one given between $\sthom(I(E_n),\F_p)$.

Now, we can turn to the isomorphism between $\sthom( M_n ,\F_p)$ and $H^2(E_n,\F_p)$. Let us set

\[\fonction{\ove{\zeta_{i}}}{M_n}{\F_p}{x\in\mathcal{S}}{\left\lbrace\begin{array}{lcr}
1 & \text{if}& x=x_{i}^p \\
0 & & \text{otherwise}
\end{array}\right.}\, . \]
The maps in bijection with the cup products are simply given by composing the two sets of representatives we have previously given, hence:
\[\fonction{\ove{\eta_{i}\smile\eta_j}}{M_n}{\F_p}{x\in\mathcal{S}}{\left\lbrace\begin{array}{lcr}
1 & \text{if}& x=(x_{i},x_{j}) \\
-1 &\text{if}& x=(x_{j},x_{i}) \\
0 & & \text{otherwise}
  \end{array}\right.}\, . \]
Note that those are again well defined morphisms of modules and a basis of $\hom (M_n,\F_p)$ which is again a basis of $\sthom (M_n,\F_p)$, since its cardinal is equal to the dimension of $H^2 (E_n,\F_p)$.

Let us come back to the short exact sequence \ref{SE}. In our situation, the monomorphism from $\Omega^{-1}(\F_p)\simeq\langle r|r\cdot\No=0 \rangle$ into $\Omega^{2}(\F_p)\simeq M_n$ is given by
\[\kappa
\colon r\mapsto\left\lbrace\begin{array}{rcl} (x_1,x_2)+(x_3,x_4)+\ldots (x_{n-1},x_n)  &\text{if} & \xi_{p^2}\in\k \\
x_{1}^{p}+(x_1,x_2)+(x_3,x_4)+\ldots (x_{n-1},x_n)  &\text{if} & \xi_{p^2}\notin\k
\end{array}\right.\, , \]
indeed the generator of $\Omega^{-1}(\F_p)$ is sent into the equivalence class of $[\Delta]$ modulo $\Phi^{(2)}(\mathcal{F}_{n}$, where $\Delta$ is the \dem relation:
\[\Delta=x_{1}^{p^{k}}(x_1,x_2)(x_3,x_4)\ldots(x_{n-1},x_{n})\in\Phi(\mathcal{F}_{n})\, . \]

However, such map is also a (Tate)-cohomology class of degree $-3$, and, according to Tate duality, it induces a linear form on $H^2 (E_n,\F_p)$, let us call it $\kappa^{\#}$. 

The action of $\kappa^\#$ will be soon described and can be linked to the Hasse invariant of $\k$. The inflation map gives birth to an isomorphism between the groups $H^{1}(\mathcal{D}_{k,n},\F_p)$ and $H^1 (\mathcal{F}_{n},\F_p)\simeq H^1 (E_{n},\F_p)$ which is obviously denoted by $\operatorname{inf}$. Let $m$ denote the map from $H^{1}(E_n,\F_p)\otimes H^{1}(E_n,\F_p)$ to $H^{2}(E_n,\F_p)$ induced by the cup-product and in a similar fashion, let $\tilde{m}$ be the one given by the cup product of $H^{\bullet} (\mathcal{D}_{k,n},\F_p)$. It has to be remembered that $H^2 (\mathcal{D}_{k,n},\F_p)$ is isomorphic to the Brauer group (modulo $p$) of $\k$, denoted here $\operatorname{Br}_{p}(\k)$ and the Hasse invariant $\operatorname{Inv}$ gives an isomorphism between $\operatorname{Br}_{p}(\k)$ and $\F_p$ (see \cite[\S II.3, II.4]{Gui} for a complete description of these notions).

In a certain way, the map $\kappa^{\#}$ describes the Hasse invariant on the decomposable part of the cohomology:

\begin{prop}The action of $\kappa^{\#}$ on the generators of $H^2 (E_n, \F_p)$ is as follows:
\[\begin{array}{rcl}
\kappa^{\#}(\eta_{i}\smile\eta_{j})&=&\left\lbrace\begin{array}{rcll}
1 & \text{if} & i=2k+1,\,\, j=i+1& (k\in\{0,\ldots\floor{\frac{n}{2}}\})\\
0 & \text{otherwise}& &
\end{array}\right. \, ,\\
\kappa^{\#}(\zeta_{i})&=&\left\lbrace\begin{array}{rcl}
1 & \text{if} & i=1 \text{ and } \xi_{p^2}\notin\k \\
0 & \text{otherwise}&
\end{array}\right. \, .
\end{array} \] Moreover, the following diagram is commutative
\[\xymatrix{ H^1 (E_n ,\F_p)\otimes H^1 (E_n ,\F_p) \ar[r]^{m} \ar[dd]^{\operatorname{inf}^{\otimes2}} & H^2 (E_n,\F_p) \ar[rd]^{\kappa^\#} & \\
 & & \F_p \\ H^1 (\mathcal{D}_{k,n},\F_p)\otimes H^1 (\mathcal{D}_{k,n},\F_p) \ar[r]^{\tilde{m}} & H^2 (\mathcal{D}_{k,n},\F_p)\simeq \operatorname{Br}_{p}(\k)\ar[ru]^{\operatorname{Inv}}  
 }\, . \]\end{prop}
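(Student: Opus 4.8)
The plan is to reduce $\kappa^{\#}$ to a single evaluation against the class $[\Delta]\in M_n$, then to read off the tabulated values, and finally to identify the lower route of the square with the Hilbert symbol supplied by local class field theory. First I would make precise the meaning of $\kappa^{\#}$ in the stable category: as a stable map $\kappa$ represents a class in $\sthom(\Omega^{-1}(\F_p),\Omega^{2}(\F_p))\cong\hat{H}^{-3}(E_n,\F_p)$, whereas a class $c\in H^2(E_n,\F_p)$ corresponds to $\ove{c}\in\sthom(M_n,\F_p)$. By definition $\kappa^{\#}(c)$ is the image of the Tate cup product $\kappa\smile c$ in $\hat{H}^{-1}(E_n,\F_p)\cong\F_p$, and, as recalled in the proof of Lemma~\ref{LCent}, this cup product is computed stably as the composite $\ove{c}\circ\kappa\colon\Omega^{-1}(\F_p)\to M_n\to\F_p$. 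Since $\Omega^{-1}(\F_p)=\langle r\mid r\cdot\No=0\rangle$ and $\No$ acts as $0$ on the trivial module, every $\F_pE_n$-linear map $\Omega^{-1}(\F_p)\to\F_p$ is determined by the scalar it assigns to $r$; as $\hat{H}^{-1}(E_n,\F_p)$ is one-dimensional this scalar is exactly the stable class. Recalling that $\kappa(r)=[\Delta]$, I would conclude
\[\kappa^{\#}(c)=\ove{c}([\Delta])\, .\]

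Next, the two displayed formulas follow by evaluating $\ove{\eta_i\smile\eta_j}$ and $\ove{\zeta_i}$ on $[\Delta]$ via their explicit description on the generating set $\mathcal{S}$ of $M_n$. When $\xi_{p^2}\in\k$ the \dem exponent satisfies $k\geq 2$, whence $x_1^{p^{k}}=(x_1^{p^{k-1}})^{p}\in\Phi^{(2)}(\mathcal{F}_{n})$ and $[\Delta]=\sum_{l}(x_{2l-1},x_{2l})$; when $\xi_{p^2}\notin\k$ one has $k=1$ and $[\Delta]=x_1^{p}+\sum_{l}(x_{2l-1},x_{2l})$. Since $\ove{\eta_i\smile\eta_j}$ returns the coefficient of $(x_i,x_j)$, it equals $1$ exactly on the commutators occurring in $\Delta$, that is for $i$ odd and $j=i+1$, and $0$ otherwise; since $\ove{\zeta_i}$ returns the coefficient of $x_i^{p}$, it equals $1$ exactly when $i=1$ and $\xi_{p^2}\notin\k$. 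This is precisely the asserted action of $\kappa^{\#}$ on the generators.

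For the commutative square I would note that both composites $\kappa^{\#}\circ m$ and $\operatorname{Inv}\circ\tilde{m}\circ\operatorname{inf}^{\otimes2}$ are $\F_p$-bilinear, so it suffices to compare their values on the pairs $(\eta_i,\eta_j)$. Because inflation is a ring homomorphism, the lower route returns $\operatorname{Inv}(\operatorname{inf}(\eta_i)\smile\operatorname{inf}(\eta_j))$, namely the Hasse invariant of a cup product in $H^2(\mathcal{D}_{k,n},\F_p)\cong\operatorname{Br}_{p}(\k)$, which by local reciprocity is the Hilbert symbol of the corresponding classes. Here I would invoke the structure theory of \dem groups: Labute's generators $x_1,\ldots,x_n$ are chosen so that this pairing has the standard symplectic Gram matrix, its nonzero entries sitting exactly at the consecutive pairs $(2l-1,2l)$ appearing in $\Delta$; equivalently, under the transgression isomorphism $H^2(\mathcal{D}_{k,n},\F_p)\cong\big(\ker\pi/(\ker\pi)^{p}(\ker\pi,\mathcal{F}_{n})\big)^{*}$ coming from the five term sequence of $1\to\ker\pi\to\mathcal{F}_{n}\to\mathcal{D}_{k,n}\to1$, with $\operatorname{Inv}$ normalized so that the relation class pairs to $1$, the lower route equals the coefficient of $(x_i,x_j)$ in $[\Delta]$, hence equals $\kappa^{\#}(\eta_i\smile\eta_j)$ by the previous step. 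The diagonal terms are consistent, for when $p$ is odd $\eta_i\smile\eta_i=0$, matching $(a,a)_p=(a,-1)_p=0$ since $-1=(-1)^{p}$ is a $p$-th power. The step I expect to be the main obstacle is exactly this last identification: matching $\operatorname{Inv}(\operatorname{inf}(\eta_i)\smile\operatorname{inf}(\eta_j))$ with the relation coefficient and fixing the normalization under which $\operatorname{Inv}$ sends the transgressed \dem relation to $1\in\F_p$. This is where the arithmetic genuinely enters, through Labute's presentation and the description of the cup product as a Hilbert symbol (cf.\ \cite{Gui}); the reduction of $\kappa^{\#}$ to evaluation at $[\Delta]$ carried out in the first step is precisely what makes the group-theoretic and the arithmetic normalizations coincide.
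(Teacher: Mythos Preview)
Your proof is correct and follows essentially the same strategy as the paper's own argument: reduce $\kappa^{\#}(c)$ to the evaluation $\ove{c}\circ\kappa$ at the generator $r$, read off the values from the explicit formula for $[\Delta]$ in $M_n$, and then invoke Labute's description of the cup product on $H^\bullet(\mathcal{D}_{k,n},\F_p)$ for the commutativity of the square. Your write-up is considerably more explicit than the paper's (which dispatches the first part as ``quite clear'' and the second by a bare citation to \cite[Proposition 4]{Lab}); in particular, your justification that $x_1^{p^k}\in\Phi^{(2)}(\mathcal{F}_n)$ when $k\ge 2$, and your honest flagging of the normalization of $\operatorname{Inv}$ as the genuine arithmetic input, are welcome additions rather than deviations.
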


\begin{proof} To prove this statement, it is sufficient to check that stably speaking the maps \[\begin{array}{rcccl}
\ove{\eta_{i}\smile\eta_{j}}\circ\kappa&\colon&\Omega^{-1}(\F_p)&\longrightarrow&\F_p\\
\ove{\zeta_{s}}\circ\kappa&\colon&\Omega^{-1}(\F_p)&\longrightarrow&\F_p
\end{array}\] send the generator $r$ of $\Omega^{-1}(\F_p)$ to $1$ when $i=2k+1$ and $j=i+1$ or when $s=1$ and $\xi_{p^2}\in\k$, and otherwise vanish. According to the previous expressions, it is quite clear.

Furthermore the isomorphism between $H^2 (\mathcal{D}_{n,k},\F_p)$ and $\F_p$ is given by the Hasse invariant. According to \cite[Proposition 4]{Lab}, which describes the cup products, the previous diagram commutes.

\end{proof}

\subsection{From $M_n$ to $J^*$} 
 
Now that we have a proper presentation by generators of $M_{n}$, we can find one of $J$ without any difficulty. In order to clarify our results we assume that $\xi_{p^2}\in\k$ if it is not the case, only small changes have to be made, which will be pointed out along the text. As pointed out in the previous subsection, the generators of $M_n$ are in bijection with a basis of $H^2(E_n,\F_p)$, in fact the relations are indexed by a basis of $H^3 (E_n,\F_p)$. Let us label them in the following way
\[\begin{array}{rcll}
\zeta_{i}\smile\eta_{i}&\colon &x_{i}^p X_i =0 & \\
\zeta_{i}\smile\eta_{j}&\colon &x_{i}^p X_j +(x_i,x_j)X_{i}^{p-1} =0 &i<j \\
\zeta_{i}\smile\eta_{j}&\colon &x_{i}^p X_j -(x_j,x_i)X_{i}^{p-1} =0 &i>j \\
\eta_{i}\smile\eta_{j}\smile\eta_{k}&\colon &(x_i ,x_j)X_k +(x_j,x_k)X_i -(x_i,x_k)X_j =0 &1\leq i<j<k\leq n 
\end{array} \]

We can state the following lemma:

\begin{lm} \label{lem-pres-J}
  The module~$J$ can be presented as:
  \begin{align*}
    J & \cong \omega_2(\F_p) / (\Delta ) \\
      & \cong \langle x_{i}^p, (x_i , x_j) ~|~ \zeta_{i}\smile\eta_{j},\eta_{i_0}\smile\eta_{i_1}\smile\eta_{i_2}, \Delta  \rangle
\end{align*}
  where~$1 \le i < j \le n$, and $1\leq i_0 <i_1 <i_2\leq n$; as for~$\Delta $, it stands for the relation 
\[ (x_1,x_2)+(x_3,x_4)+\ldots+(x_{n-1},x_n)=0 \, .   \]
\end{lm}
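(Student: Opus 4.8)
The plan is to read off both isomorphisms directly from the short exact sequence \eqref{SE}, combined with the presentation of $M_n$ already obtained in Corollary \ref{Pres}, so that essentially no new computation is required.

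First I would establish the isomorphism $J \cong \omega_2(\F_p)/(\Delta)$. The sequence \eqref{SE} exhibits $J^*$ as the cokernel of $\kappa$, hence $J^* \cong \omega_2(\F_p)/\Im(\kappa)$. Since $\omega_{-1}(\F_p)$ is cyclic, generated by $r$, its image under $\kappa$ is precisely the $\F_p G$-submodule generated by $\kappa(r)$. Under the standing assumption $\xi_{p^2}\in\k$, the description of $\kappa$ from the previous subsection reads $\kappa(r)=(x_1,x_2)+(x_3,x_4)+\cdots+(x_{n-1},x_n)=\Delta$. Concretely, this uses that $k\geq 2$, so that the Demu\v{s}kin term $x_1^{p^k}=(x_1^{p^{k-1}})^p$ lies in $\Phi(\mathcal{F}_n)^p\subseteq\Phi^{(2)}(\mathcal{F}_n)$ and therefore drops out in $M_n=\Phi(\mathcal{F}_n)/\Phi^{(2)}(\mathcal{F}_n)$. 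Thus $\Im(\kappa)=(\Delta)$ and $J^*\cong\omega_2(\F_p)/(\Delta)$; invoking the self-duality $J\cong J^*$ recorded earlier (Tate duality) yields the first claimed isomorphism.

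Next I would obtain the explicit presentation. Recall that $\omega_2(\F_p)=M_n$ carries the presentation of Corollary \ref{Pres}, with generators the $x_i^p$ and the $(x_i,x_j)$ for $i<j$, and relations exactly the four families indexed by the cohomology classes $\zeta_i\smile\eta_j$ and $\eta_{i_0}\smile\eta_{i_1}\smile\eta_{i_2}$ tabulated just before the statement. Since quotienting a module by the submodule generated by a single element amounts, at the level of presentations, to adjoining that element as a new relation, applying this to $(\Delta)$ gives
\[ J \cong \langle x_i^p,\,(x_i,x_j) \mid \zeta_i\smile\eta_j,\ \eta_{i_0}\smile\eta_{i_1}\smile\eta_{i_2},\ \Delta\rangle \, , \]
which is the second isomorphism.

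As every ingredient is already in place, there is no serious obstacle; the single point demanding care is the verification that $\kappa(r)$ equals $\Delta$ rather than the longer expression $x_1^p+\Delta$, that is, keeping track of the hypothesis $\xi_{p^2}\in\k$ (equivalently $k\geq 2$) throughout. In the complementary case the $x_1^p$ term survives in $\kappa(r)$, and the presentation of $J$ acquires a correspondingly modified relation; I would flag this explicitly so that the \emph{small changes} announced for the case $\xi_{p^2}\notin\k$ are transparent.
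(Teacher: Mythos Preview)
Your proof is correct and essentially equivalent to the paper's. You invoke the short exact sequence \eqref{SE} and the explicit description of $\kappa(r)$ to identify $\operatorname{Im}(\kappa)=(\Delta)$, then appeal to $J\cong J^*$; the paper instead rederives this kernel directly from properties of Frattini subgroups under quotients (if $K\subset\Phi(G)$ then $\Phi(G/K)=\Phi(G)/K$), but the content is the same identification of the kernel of $M_n\to J^*$ with the submodule generated by $\Delta$. Your version is arguably more economical since it reuses the machinery already in place.
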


\begin{proof}
  The Demu\v{s}kin group $G_\k(p) \cong \mathcal{D}_{k,n}$ is the quotient of the free pro-$p$-group $\mathcal{F}_{n}$ by the relation
  \[\Delta\colon x_{1}^{p^{k}}(x_1,x_2)(x_3,x_4)\ldots(x_{n-1},x_{n})=1 \, , \]
  and~$k \ge 2$ from our assumption that~$\xi_{p^2} \in \k$ (see \S\ref{subsec-demushkin}). It is clear that~$\Delta \in \Phi (\mathcal{F}_n)$.

When~$G$ is a finitely generated pro-$p$ group, and~$K$ is a closed subgroup, one sees easily that~$\Phi (G)$ maps onto $\Phi (G/K)$ under the quotient map $G \longrightarrow G/K$. Moreover, if~$K \subset \Phi (G)$, then~$\Phi (G/K)$ can be identified with~$\Phi (G)/K$, clearly. If $N$ denotes the smallest closed, normal subgroup containing $\Delta$, we have~$N \subset \Phi (\mathcal{F}_n)$ and so there is an exact sequence 
\[ \begin{CD}
0 @>>> N @>>> \Phi (\mathcal{F}_n) @>>> \Phi (\DD_{k,n}) @>>> 1 \, . 
\end{CD}
  \]
  Now by the same reasoning, we see that~$\Phi ^{(2)}(\FF_n)$ maps onto $\Phi ^{(2)} (\DD_{k,n})$; it follows easily that there is another exact sequence 
\[ \begin{CD}
0 @>>> N / (N \cap \Phi ^{(2)}(\FF_n)) @>>> \Phi (\mathcal{F}_n) / \Phi ^{(2)}(\FF_n) @>>> \Phi (\DD_{k,n}) / \Phi ^{(2)}(\DD_{k,n}) @>>> 1 \, . 
\end{CD}  \]
This says in other notation, using our identification of~$M_n$ with~$\omega_2(\F_p)$, that the kernel of~$\omega_2(\F_2) \longrightarrow J$ is generated, as~$\F_p E_n$-module, by~$\Delta $. 
\end{proof}

\begin{rem}
If $\k$ does not contain $\xi_{p^2}$, then the relation $\Delta$ becomes
\[x_{1}^{p}+(x_1,x_2)+(x_3,x_4)+\ldots+(x_{n-1},x_n)=0\, . \]
Furthermore if $p=2$ and $n$ is odd, the relation is
\[ x_{1}^{2}+(x_2,x_3)+(x_4,x_5)+\ldots +(x_{2s},x_{2s+1}) \]
\end{rem}

We can even go a little deeper into the computations: to achieve our goal of studying some invariants, we would like in fact to find a basis of $J^*$. Bear in mind that the projection from $\mathcal{F}_n$ onto $\mathcal{D}_{k,n}$ induces an epimorphism $\pi_{\Phi(\mathcal{F}_n)}\colon M_n\longrightarrow J^*$, hence the image of the previously found basis $\mathcal{B}$ of $M_n$ is a generating system of $J^*$. Nevertheless we have to get rid of some vectors in order to have a proper basis: those vectors can be found through the study of the kernel of $\pi_{\Phi(\mathcal{F}_n)}$ which is generated by $\Delta$. In order to do so, we need to introduce the two following family of maps on multi-indices
\[\fonction{\delta_{i}}{\Z^n}{\Z^n}{(\nu_1,\ldots,\nu_n)}{(\nu_1,\ldots,\nu_{i}-1,\ldots,\nu_{n})} \]
and
\[\fonction{\gamma_{i}}{\Z^n}{\Z^n}{(\nu_1,\ldots,\nu_n)}{(\nu_1,\ldots,\nu_{i}+1,\ldots,\nu_{n})}  \]

Now we can state a technical lemma -the reader can skip its proof, for it is a silly and tedious computation.

\begin{lm}For any multi-index $\nu\in\{ 0,\ldots,p-1\}^{n}$ we set
\[I(\nu)=\{k|\nu_{2k-1}\neq p-1\,\text{or}\, \, \nu_{2k}\neq p-1 \} \, , \]
and
\[\mu(\nu)=\max\{k|\nu_{k}\neq 0 \}\, .\]
Then the following equality holds in $M_n$ \[\Delta\cdot X^{\nu}=\sum\limits_{\substack{s=1\\s\in I(\nu)}}^{\lceil\frac{\mu(\nu)}{2} \rceil-1}(x_{2s-1},x_{\mu(\nu)})\cdot X^{\gamma_{2s}\circ\delta_{\mu(\nu)}(\nu)}-(x_{2s},x_{\mu(\nu)})\cdot X^{\gamma_{2s-1}\circ\delta_{\mu(\nu)}(\nu)}+\sum\limits_{\substack{s=\lceil \frac{\mu(\nu)}{2}\rceil\\s\in I(\nu)}}^{\frac{n}{2}}(x_{2s-1},x_{2s})\cdot X^{\nu}\, . \] \end{lm}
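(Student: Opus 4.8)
The plan is to start from the explicit shape of $\Delta$ in $M_n$ under the standing hypothesis $\xi_{p^2}\in\k$, namely
\[\Delta=\sum_{s=1}^{n/2}(x_{2s-1},x_{2s})\, ,\]
so that, by additivity of the $\F_p E_n$-action,
\[\Delta\cdot X^{\nu}=\sum_{s=1}^{n/2}(x_{2s-1},x_{2s})\cdot X^{\nu}\, .\]
The whole argument then reduces to simplifying each summand $(x_{2s-1},x_{2s})\cdot X^{\nu}$ using only the relations \eqref{Pp2} and \eqref{Jac} already established for $M_n$, and reassembling. (We may assume $\nu\neq(0,\ldots,0)$, so that $\mu(\nu)$ is defined; when $\nu=0$ the formula simply returns $\Delta$ itself.)

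First I would discard the vanishing terms. If $s\notin I(\nu)$, then by definition $\nu_{2s-1}=\nu_{2s}=p-1$, so that $X^{\nu}$ has $X_{2s-1}^{p-1}X_{2s}^{p-1}$ as a factor; since the $X_i$ commute, relation \eqref{Pp2} gives $(x_{2s-1},x_{2s})X_{2s-1}^{p-1}X_{2s}^{p-1}=x_{2s-1}^{p}X_{2s}^{p}=0$, whence $(x_{2s-1},x_{2s})\cdot X^{\nu}=0$. This is exactly what justifies restricting both sums in the statement to the indices $s\in I(\nu)$.

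Next I would split the surviving sum at $s=\lceil\mu(\nu)/2\rceil$, writing $M=\mu(\nu)$. For $s\geq\lceil M/2\rceil$ one has $2s\geq M$, so the commutator already reaches above the support of $\nu$; these terms are left untouched and assemble into the second sum $\sum_{s\geq\lceil M/2\rceil,\,s\in I(\nu)}(x_{2s-1},x_{2s})\cdot X^{\nu}$. For $s\leq\lceil M/2\rceil-1$ one checks $2s-1<2s<M$, and since $M$ is the largest index with $\nu_M\neq 0$ we may factor $X^{\nu}=X_M\cdot X^{\delta_M(\nu)}$ with $\nu_M\geq 1$. Applying the Jacobi relation \eqref{Jac} with $(i,j,k)=(2s-1,2s,M)$,
\[(x_{2s-1},x_{2s})X_M=(x_{2s-1},x_M)X_{2s}-(x_{2s},x_M)X_{2s-1}\, ,\]
and multiplying by $X^{\delta_M(\nu)}$ converts each such summand into
\[(x_{2s-1},x_M)\cdot X^{\gamma_{2s}\circ\delta_{M}(\nu)}-(x_{2s},x_M)\cdot X^{\gamma_{2s-1}\circ\delta_{M}(\nu)}\, ,\]
which is precisely the first sum. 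Combining the two pieces yields the announced identity.

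The only delicate point — which is tedium rather than genuine difficulty — is the bookkeeping at the boundary and the multi-index arithmetic. One must verify that the cutoff $\lceil M/2\rceil$ correctly separates the pairs $(2s-1,2s)$ lying strictly below $M$ from those meeting or exceeding it, treating $M$ even and $M$ odd separately (in both cases $s\leq\lceil M/2\rceil-1$ is equivalent to $2s<M$). One should also note that the maps $\gamma_{2s},\gamma_{2s-1}$ create no spurious contributions: whenever an incremented coordinate reaches $p$ the monomial vanishes since $X_i^{p}=0$ in $\F_p E_n$, which is consistent with the terms already thrown away for $s\notin I(\nu)$. Once these routine checks are made, the equality follows directly.
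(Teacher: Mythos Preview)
Your proof is correct and follows essentially the same route as the paper: expand $\Delta\cdot X^{\nu}$ as a sum over the pairs $(x_{2s-1},x_{2s})$, kill the terms with $s\notin I(\nu)$ via \eqref{Pp2}, split the remaining sum at $\lceil\mu(\nu)/2\rceil$, and rewrite the lower range using the Jacobi relation \eqref{Jac} after factoring out one $X_{\mu(\nu)}$. Your extra remarks on the parity of $\mu(\nu)$ and on possible overflow $X_i^{p}=0$ from the $\gamma$-maps are harmless bookkeeping that the paper leaves implicit.
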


\begin{proof}
First note that
\[\Delta\cdot X^{\nu}=\sum\limits_{k\in I(\nu)}(x_{2k-1},x_{2k})X^{\nu}+\sum\limits_{k\notin I(\nu)}(x_{2k-1},x_{2k})X^{\nu}\, , \]
and since $(x_i,x_j)X_{i}^{p-1}X_{j}^{p-1}=0$ according to \ref{Pp2}, we have the following equality
\[(x_{2k-1},x_{2k})X^{\nu}=0\, , \]

if $k\notin I(\nu)$. Therefore, it is sufficient to sum upon the integers which are in $I(\nu)$. Now, let us split the sum in two:
\[\sum\limits_{k\in I(\nu)}(x_{2k-1},x_{2k})X^{\nu}=\sum\limits_{\substack{k\in I(\nu)\\k<\lceil\frac{\mu(\nu)}{2}\rceil}}(x_{2k-1},x_{2k})X^{\nu}+\sum\limits_{\substack{k\in I(\nu)\\k\geq\lceil\frac{\mu(\nu)}{2}\rceil}}(x_{2k-1},x_{2k})X^{\nu} \, . \]
Remark that the vectors in the second sum are all in the basis $\mathcal{B}$, whereas those on the first sum are not. To address this issue, we will use in fact the Jacobi relation (the one denoted $\eta_{2k-1}\smile\eta_{2k}\smile\eta_{\mu(\nu)}$), hence
\[\begin{array}{rcl}
(x_{2k-1},x_{2k})X^{\nu}&=&(x_{2k-1},x_{2k})X_{\mu(\nu)}X^{\delta_{\mu(\nu)}(\nu)}\\
&=& (x_{2k-1},x_{\mu(\nu)})X_{2k}\cdot X^{\delta_{\mu(\nu)}(\nu)}-(x_{2s},x_{\mu(\nu)})X_{2s-1}\cdot X^{\delta_{\mu(\nu)}(\nu)} \, .
\end{array} \]
All those vectors -when they are non zero- are in the basis $\mathcal{B}$, and the proof is complete.
\end{proof}

\begin{rem}
If $\xi_{p^2}$ is not in $\k$, then it is necessary add a term $x_{1}^{p}\cdot X^{\nu}$. If it is non zero it is equal to $(x_{1},x_{\mu(\nu)})X_{1}^{p-1}X^{\delta_{\mu(\nu)}(\nu)}$.
\end{rem}

Having found a basis for $\ker\pi_{\Phi(\mathcal{F}_n)}$, we can now find a basis for  $J^*$: the following basis remains exactly the same no matter if $\k$ does or does not contain $\xi_{p^2}$.

\begin{prop}Let $\mathcal{B}$ the basis obtained for $\omega_2 (\F_p))$. A basis $\mathcal{B}'$ of $J^*$ is given by the image of $\mathcal{B}\backslash V$, where $V$ is the set consisting of the vectors $(x_i ,x_j)X^{\nu}$ verifying the following conditions:
 \begin{enumerate}
\item If $i=n-1$ there is no condition.
\item If $i\neq n-1$, the following conditions have to be verified:
\begin{enumerate}
\item $\nu_{n}=p-2$
\item if $i$ is even
\begin{enumerate}
\item $\nu_{i-1}\neq 0 \, .$
\item $\forall j\in\{n-1, n-2,\ldots,i\},\quad \nu_{j}=p-1 \, .$
\end{enumerate}
\item if $i$ is odd
\begin{enumerate}
\item $\nu_{i+1}\neq 0 \, .$
\item $\forall j\in\{n-1,\ldots,i+1\},\quad \nu_{j}=p-1\, .$
\end{enumerate}
\end{enumerate}
\end{enumerate} \label{Basis} \end{prop}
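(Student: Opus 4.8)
The plan is to turn the statement into a Gaussian-elimination argument over the basis $\mathcal{B}$ of Corollary~\ref{Base}. Recall that $\pi_{\Phi(\mathcal{F}_n)}\colon M_n\longrightarrow J^*$ is onto, and that (as established before Lemma~\ref{lem-pres-J}) its kernel is the $\F_p E_n$-submodule generated by $\Delta$, so that
\[ \ker\pi_{\Phi(\mathcal{F}_n)}=\operatorname{Span}_{\F_p}\{\Delta\cdot X^{\nu}\ :\ \nu\in\{0,\ldots,p-1\}^n\}\, . \]
Since the generator $r$ of $\omega_{-1}(\F_p)=\langle r\mid r\cdot\No=0\rangle$ maps to $\Delta$ under $\kappa$, we get $\Delta\cdot\No=0$; and as the identity $\sum_{j=0}^{p-1}T^{j}=(T-1)^{p-1}$ used earlier gives $\No=X_1^{p-1}\cdots X_n^{p-1}=X^{(p-1,\ldots,p-1)}$, the relation $\Delta\cdot X^{(p-1,\ldots,p-1)}$ is zero. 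Hence the kernel is already spanned by the $p^n-1$ vectors $\Delta\cdot X^{\nu}$ with $\nu\ne(p-1,\ldots,p-1)$; this number agrees with $\dim_{\F_p}\ker=\dim M_n-\dim J^*=((n-1)p^n+1)-((n-2)p^n+2)=p^n-1$, using Lemma~\ref{lem-dim-Mn} and \eqref{Dim}.

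Next I would isolate the elementary linear-algebra engine. Fix a total order on $\mathcal{B}$, and suppose one can assign to each $\nu\ne(p-1,\ldots,p-1)$ a \emph{leading term} $\operatorname{LT}(\nu)\in\mathcal{B}$, namely the largest basis vector occurring with nonzero coefficient in $\Delta\cdot X^{\nu}$, in such a way that $\nu\mapsto\operatorname{LT}(\nu)$ is injective. Then the relations $\{\Delta\cdot X^{\nu}\}$ are automatically linearly independent (hence a basis of the kernel), the triangular change of basis they define is invertible, and a standard argument shows that the images under $\pi_{\Phi(\mathcal{F}_n)}$ of the complementary family $\mathcal{B}\setminus\{\operatorname{LT}(\nu)\}_{\nu}$ form a basis of $J^*$. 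The entire proposition then reduces to the single assertion
\[ \{\operatorname{LT}(\nu)\ :\ \nu\ne(p-1,\ldots,p-1)\}=V\, ,\]
together with injectivity of $\operatorname{LT}$.

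The substance — and the main obstacle — is to read off $\operatorname{LT}(\nu)$ from the explicit expansion of $\Delta\cdot X^{\nu}$ in the technical lemma preceding the statement, and to match the resulting set against the clauses defining $V$. I would order $\mathcal{B}$ so as to give highest priority to the second commutator index (favouring the value $n$) and then to the exponent of $X_n$. Under such an order the \emph{undisturbed} term $(x_{n-1},x_n)\cdot X^{\nu}$ of the second sum, when it is present — which is exactly when $(\nu_{n-1},\nu_n)\ne(p-1,p-1)$ — carries the largest $X_n$-weight and is the leading term; these yield the family of case~(1). When instead $(\nu_{n-1},\nu_n)=(p-1,p-1)$ the undisturbed top term is absent, and the leading role passes to a \emph{shifted} term $(x_{2s-1},x_{\mu(\nu)})\cdot X^{\gamma_{2s}\delta_{\mu(\nu)}(\nu)}$ from the first sum; the decrement $\delta_{\mu(\nu)}$ lowers the top exponent to $p-2$, and the surviving run of saturated pairs $\nu_j=p-1$ is precisely what the string conditions of case~(2) record, with the parity of $i$ distinguishing which coordinate is shifted. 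The delicate points — where I expect the real difficulty and the risk of off-by-one errors — are the borderline indices: one must verify that the shifted term inheriting the leading role is the correct (unique) one so that $\operatorname{LT}$ is injective, that the fully saturated index is properly excluded since there $\Delta\cdot X^{\nu}=0$, and that the exponent conditions obtained reproduce clauses (2)(b) and (2)(c) exactly. This bookkeeping is as tedious as that of the preceding lemma.

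Finally I would treat the remaining cases. When $\xi_{p^2}\notin\k$ (and in the $p=2$ variants) the relation $\Delta$ acquires the extra summand $x_1^p$, adding a term $(x_1,x_{\mu(\nu)})X_1^{p-1}X^{\delta_{\mu(\nu)}(\nu)}$ to each $\Delta\cdot X^{\nu}$, as noted in the remark after the technical lemma. One checks that, under the chosen order, this extra term is never the leading term, so $\operatorname{LT}(\nu)$ and therefore the set $V$ are unchanged — which is exactly why the same basis $\mathcal{B}'$ is valid regardless of whether $\xi_{p^2}\in\k$.
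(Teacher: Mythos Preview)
Your proposal is correct and follows essentially the same approach as the paper: both expand each relation $\Delta\cdot X^{\nu}$ via the preceding technical lemma, use an order on $\mathcal{B}$ to pick out a distinguished (leading) term for every $\nu\ne(p-1,\ldots,p-1)$, and identify the resulting $p^n-1$ vectors with the set $V$ so that $\mathcal{B}\setminus V$ descends to a basis of $J^*$. Your write-up is in fact more explicit about the Gaussian-elimination structure and the dimension count than the paper's own (rather terse) proof, but the method is the same.
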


\begin{proof}The main idea of the proof is simply get rid of a single vector for each relation $\Delta\cdot X^{\nu}$. The given vectors in $V$ may be found in the formula of the previous lemma.

If $i=n-1$, note that, since the vector $(x_{i},x_{j})X^{\nu}$, $j=n-1$, hence it appears in the sum of the previous lemma as the term most to the right. Now, suppose that $i\neq n-1$ and the other conditions are of course verified. The vector we wish to eliminate appears in the relation $\Delta\cdot X^{\gamma_{n}(\nu)}$: more precisely it appears in the last term of the first sum.

As long as $\nu\neq (p-1,\ldots,p-1)$, $\Delta\cdot X^{\nu}$ is not zero and formally speaking taking again our order relation, we can verify that this is truly a free family of $M_{n}$; therefore we obtain that we are allowed to get rid of those vectors. Thus the system $\pi_{\Phi(\mathcal{F}_n)} (\mathcal{B}\backslash V)$ is still a generating system and has the expected cardinality, hence the proposition.\end{proof}

\subsection{Computing some invariants}

In their article (\cite{MKEG}), the  authors introduced some invariants for various fields, including local fields and $C$-fields; as we focus only on local fields, we do not really aim to work in such a general frame as they do, but we will still show how their results, obtained for $p=2$, might be extended when $p$ is an odd prime number and for a local field. Note that the following results mainly depend upon Proposition \ref{Basis} which holds in every case, provided that $p\neq 2$: therefore what follows is true if $\xi_{p^2}\notin\k$. Nevertheless we will assume that $\xi_{p^2}\in\k$ in order to make our computations less cumbersome.

Remember that if $M$ is an $R$-module, the socle series of $M$ is defined by

\[\left\lbrace\begin{array}{rcl}
\Soc^0 (M) &=& \{0\} \\
\Soc^n (M)/\Soc^{n-1}(M)&=&\Soc(M/\Soc^{n-1}(M))
\end{array}\right. \]

Therefore we have

\[\Soc^0 (M) \subset \Soc^1 (M)\subset\ldots \, , \]
if $M$ of finite type, we are sure that there exists a minimal integer $l(M)$ such that $\Soc^{l(M)}=M$ such integer is called the length of $M$; we will rather study -as long as it is possible- the radical series. Bear in mind that the length of the radical series is equal to $l(M)$ (see \cite{BensCR}). In \cite[Theorem 5.2,5.3,5.15]{MKEG}, a formula was proved, which related the length of $\Phi(\mathcal{G}_{\k}(2))/\Phi^{2}(\mathcal{G}_{\k}(2))$ to the $2$-cohomological dimension $\cd_{2}(\mathcal{G}_{\k}(2))$ of $\mathcal{G}_{\k}(2)$. Indeed, for every $C$-field, which is not formally real, the following equality holds:

\[l(\Phi(\mathcal{G}_{\k}(2))/\Phi^{2}(\mathcal{G}_{\k}(2)))+\cd_{2}(\mathcal{G}_{\k}(2))=n+1 \, . \]

Since it is not our purpose here to study such a general class of fields, we shall not give further details about them: the only thing that shall be known is that local fields are $C$-fields. Now, we can state the following proposition, which properly extends the previous formula, in our context.

\begin{prop}If $\mathbf{k}$ is a local field, then the following identity holds
\[l(\Phi(\mathcal{G}_{\k}(p))/\Phi^{2}(\mathcal{G}_{\k}(p)))+\cdp(\mathcal{G}_{\k}(p))=(p-1)n+1 \, . \] \end{prop}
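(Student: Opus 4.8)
The plan is to evaluate the two summands separately and add them. Since $\k$ is a local field containing $\xi_p$, the group $\mathcal{G}_\k(p)$ is a \dem group, hence a Poincaré-duality pro-$p$-group of dimension $2$; in particular $\cdp(\mathcal{G}_\k(p))=2$ (see \cite{CoGal}). Writing $J^{*}=\Phi(\mathcal{G}_\k(p))/\Phi^{(2)}(\mathcal{G}_\k(p))$ and recalling that the socle length equals the radical length, the statement reduces to the single assertion
\[ l(J^{*})=(p-1)n-1 \, . \]
I would read this off from the radical filtration $\Rad^{k}J^{*}=J^{*}\cdot I(E_n)^{k}$, using the surjection $\pi_{\Phi(\mathcal{F}_n)}\colon M_n\twoheadrightarrow J^{*}$ of \S\ref{subsec-ses}, whose kernel is the cyclic submodule $\langle\Delta\rangle\cong\omega_{-1}(\F_p)$.

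First I would pin down the radical length of $M_n$ itself. Set $\nu^{*}=(p-1,\dots,p-1)$ and let $\epsilon_m$ be the $m$-th standard multi-index. From the relations $x_i^pX_i=0$ and $(x_i,x_j)X_i^{p-1}X_j^{p-1}=0$ of Corollary \ref{Pres} one gets $M_n\cdot I(E_n)^{(p-1)n}=0$, while among the basis vectors of Corollary \ref{Base} exactly the $n$ vectors $(x_i,x_n)X^{\nu^{*}-\epsilon_n}$ for $1\le i\le n-1$ and $(x_{n-1},x_n)X^{\nu^{*}-\epsilon_{n-1}}$ survive at the bottom. The delicate point — the presentation of $M_n$ is \emph{not} homogeneous for the naive grading, since a relation such as $x_i^pX_j=\pm(x_i,x_j)X_i^{p-1}$ mixes degrees — is that every product (generator)$\cdot X^{\rho}$ with $|\rho|=(p-1)n-1$ reduces, through the Jacobi and power relations, to a combination of these $n$ vectors; hence $\Rad^{(p-1)n-1}M_n$ is precisely their $\F_p$-span, of dimension $n$, and $l(M_n)=(p-1)n$.

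The crux is the upper bound $l(J^{*})\le(p-1)n-1$, i.e. that this bottom layer is annihilated in the quotient: $\Rad^{(p-1)n-1}M_n\subseteq\langle\Delta\rangle$. Here I would feed the $n$ multi-indices $\nu=\nu^{*}-\epsilon_m$ into the technical lemma computing $\Delta\cdot X^{\nu}$. For such $\nu$ one has $\mu(\nu)=n$ and $I(\nu)$ reduced to the single pair containing $m$, and each term whose $\gamma$-shifted exponent leaves $\{0,\dots,p-1\}$ vanishes in $\F_pE_n$; a short check then identifies $\Delta\cdot X^{\nu^{*}-\epsilon_m}$, up to sign, with exactly the $n$ bottom basis vectors listed above. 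Thus these lie in $\langle\Delta\rangle$, so $\Rad^{(p-1)n-1}J^{*}=0$.

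Finally the matching lower bound $l(J^{*})\ge(p-1)n-1$ amounts to producing one element of radical depth exactly $(p-1)n-2$ surviving in $J^{*}$. I would take the basis vector $(x_1,x_n)X^{\nu}$ with $\nu=(p-1,\dots,p-1,p-2,p-2)$, so that $|\nu|=(p-1)n-2$: one checks it belongs to the basis of Corollary \ref{Base} but \emph{not} to the exceptional set $V$ of Proposition \ref{Basis} (condition (c) fails there because $\nu_{n-1}=p-2$), hence its image is a nonzero vector of $\mathcal{B}'$; being distinct from the $n$ bottom vectors it is not in $\Rad^{(p-1)n-1}M_n$, so its depth is exactly $(p-1)n-2$, and since $\Rad^{(p-1)n-1}J^{*}=0$ its image still has depth $(p-1)n-2$ in $J^{*}$. (This step requires $n\ge3$, which holds for the local fields at hand.) Combining everything,
\[ l(J^{*})+\cdp(\mathcal{G}_\k(p))=\big((p-1)n-1\big)+2=(p-1)n+1 \, . \]
I expect the main obstacle to be precisely the non-homogeneity of the presentation of $M_n$: controlling the reductions carefully enough to be certain that the bottom layer has dimension exactly $n$ and is killed \emph{precisely} by $\langle\Delta\rangle$, without collapsing the layer above it, is the heart of the argument; once that bookkeeping is secured, everything else is dimension counting.
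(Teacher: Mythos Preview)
Your argument for the Demu\v{s}kin case is correct and follows essentially the same route as the paper: you compute the radical length of $M_n$ to be $(p-1)n$, show the bottom layer dies in $J^*$ via the relation $\Delta$, and exhibit an explicit nonzero vector at depth $(p-1)n-2$ using the basis $\mathcal{B}'$ of Proposition~\ref{Basis}. The paper does the upper bound slightly differently---it computes directly in $J^*$, showing $(x_i,x_n)X_n^{p-2}\prod_{j<n}X_j^{p-1}=0$ via a chain of Jacobi and $\Delta$ relations, rather than first identifying the bottom layer of $M_n$ and then checking it lies in $\langle\Delta\rangle$---and it chooses a different witness for the lower bound (namely $(x_1,x_n)X_n^{p-2}X_2^{p-2}\prod_{j\ne 2,\,j<n}X_j^{p-1}$), but these are cosmetic differences.

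The one genuine omission is that you have assumed $\xi_p\in\k$ from the outset, whereas the proposition as stated covers an arbitrary local field. The paper's proof explicitly splits into two cases: when $\mathcal{G}_\k(p)$ is free pro-$p$ (i.e.\ $\xi_p\notin\k$), one has $\cdp(\mathcal{G}_\k(p))=1$ and the module in question is $M_n$ itself, whose length you have already computed to be $(p-1)n$; so the identity $l(M_n)+\cdp=(p-1)n+1$ follows immediately. You have all the ingredients for this case---you just need to state it.
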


\begin{proof}Remember that there are only two possibilities for $\mathcal{G}_{\k}(p)$: the free pro-$p$-group $\mathcal{F}_n$ and a \dem group $\mathcal{D}_{k,n}$. Let us distinguish between those cases.

First, let us assume that $\mathcal{G}_{\k}(p)$ is a free pro-$p$-group, then $\cdp(\mathcal{G}_{\k}(p))=1$ (see \cite[\S 3, Proposition 16]{CoGal}). Now, note that the length of $M_n$ is in fact equal to $(p-1)n$: indeed $\Rad^{n(p-1)}(\F_p E_{n})M_{n}=\{0\}$, because if $\nu=(p-1,\ldots,p-1)$, then $x_{i}^{p}X^{\nu}=0$ and $(x_i,x_j)X^{\nu}=0$, for $(x_i ,x_j)X_{i}^{p-1}X_{j}^{p-1}=0$ furthermore $\Rad^{n(p-1)-1}(\F_p E_{n})M_{n}\neq\{0\}$, for $(x_1,x_{n})X^{p-2}_{n}\prod\limits_{i<n-1}X_{i}^{p-1}$ is non-zero: it is a vector of the basis previously found.

Secondly, let us suppose that $\mathcal{G}_{\k}(p)$ is a \dem group. By definition $\cdp(\mathcal{G}_{\k}(p))=2$, furthermore we claim that the length of $J^*$ is equal to

\[l(J^*)=(p-1)n-1 \, . \]
Before doing our computations, let us set the map $\sigma\in\mathfrak{S}_{n}$ defined by
\[\sigma=(1,2)(3,4)\ldots(n-1,n)\, , \]
note that the permutation $\sigma$ is meant to send an integer $i$ to the integer $j$ such that $(x_{i},x_{j})$ or $(x_{j},x_{i})$ appears in the \dem relation. 
Now, let us compute the length of $J^*$. We remark that $\Rad^{(p-1)n-1}(\F_p E_n)\cdot J^*=\{0\}$, indeed we have
\[\begin{array}{rclr}
(x_{i},x_{n})X_{n}^{p-2}\prod\limits_{j\leq n}X_{j}^{p-1}&=&(x_i , x_n)X_{\sigma(i)}X_{\sigma(i)}^{p-2} X_{n}^{p-2}\prod\limits_{j\notin\{n,i\}}X_{j}^{p-1}&\\
&=&(x_{i},x_{\sigma(i)})X_{\sigma(i)}^{p-2}\prod\limits_{j\neq f(i)}X_{j}^{p-1}&(\eta_{i}\smile\eta_{\sigma(i)}\smile\eta_{n}) \\
&=&-\sum\limits_{\substack{s=0 \\ s\neq i}}^{\frac{n}{2}} (x_{2s-1},x_{2s})X_{\sigma(i)}^{p-2}\prod\limits_{j\neq \sigma(i)}X_{j}^{p-1} & (\Delta) \\
&=&0&\, . \end{array}
\]
Yet $\Rad^{(p-1)n-1}(\F_p E_n)J^* \neq \{0\}$, since for instance $(x_1,x_n)X_{n}^{p-2}X_{2}^{p-2}\prod\limits_{\substack{j=1 \\ j\neq 2}}^{n-1}X_{j}^{p-1}$ is in the basis  previously given.

This finishes the proof.
\end{proof}

Another noteworthy statement of this paper which is only proved when $p=2$, but which could be proved \textit{mutatis mutandis}, is the following proposition.

\begin{prop}[\cite{MKEG} 3.10]In the mod $p$ Lyndon-Hochschild-Serre spectral sequence for the group extension 
\[\xymatrix{1 \ar[r] & \Phi(\mathcal{G}_{\k}(p)) \ar[r] & \mathcal{G}_{k}(p) \ar[r] & E_n \ar[r] & 1}\, , \]
we have $E^{1,1}_{\infty}\simeq\Soc^2 (J)/\Soc(J)$.
  \end{prop}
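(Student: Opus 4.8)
The plan is to read off the two bottom rows of the $E_2$-page, reduce $E^{1,1}_\infty$ to the kernel of a single differential, and then match that kernel with the second socle layer using the short exact sequence \eqref{SE} and the self-duality of $J$. First I would write the $E_2$-page as $E_2^{s,t}=H^s(E_n,H^t(\Phi(\mathcal{G}_\k(p)),\F_p))$, where $G=E_n$ acts by conjugation. The only input needed in the rows $t=0,1$ is the identification of $H^1(\Phi(\mathcal{G}_\k(p)),\F_p)$ as an $\F_pE_n$-module. Taking $\mathcal{H}=\Phi(\mathcal{G}_\k(p))$ in Lemma \ref{lem-J-iso-frattini} gives $\mathcal{H}/\Phi(\mathcal{H})=J^*$, so the same chain of isomorphisms used in \S\ref{subsec-heller-basics} yields $H^1(\mathcal{H},\F_p)=\hom(\mathcal{H}/\Phi(\mathcal{H}),\F_p)=\hom(J^*,\F_p)\cong J$, compatibly with the $E_n$-action. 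Hence the row $t=1$ is $E_2^{s,1}\cong H^s(E_n,J)$; in particular $E_2^{0,1}\cong H^0(E_n,J)=J^{E_n}=\Soc(J)$, the last equality because $E_n$ is a $p$-group whose only simple module is $\F_p$.

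Next I would pin down the position $(1,1)$ on the $E_\infty$-page. No differential enters it: the sources $E_r^{1-r,r}$ all vanish since $1-r<0$ for $r\ge 2$. The only differential leaving it is $d_2\colon E_2^{1,1}\to E_2^{3,0}=H^3(E_n,\F_p)$, because $d_r$ would land in $E_r^{1+r,2-r}$ with $2-r<0$ once $r\ge 3$. Therefore $E^{1,1}_\infty=E_3^{1,1}=\ker\bigl(d_2\colon H^1(E_n,J)\longrightarrow H^3(E_n,\F_p)\bigr)$, and the whole problem reduces to describing this one map.

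The crucial point is that $d_2$ is governed by data already made explicit in this paper. The Frattini extension $1\to\Phi(\mathcal{G}_\k(p))\to\mathcal{G}_\k(p)\to E_n\to 1$ carries precisely the short exact sequence \eqref{SE}, namely $0\to\omega_{-1}(\F_p)\xrightarrow{\kappa}\omega_2(\F_p)\to J^*\to 0$, and I would argue that the $d_2$ on the $t=1$ row is, after transport through Tate self-duality $H^s(E_n,J)\cong H^s(E_n,J^*)$, cup product with the class of $\kappa$; that is, it is controlled by the linear form $\kappa^{\#}$ on $H^2(E_n,\F_p)$. One then computes $\ker d_2$ term by term from the explicit values of $\kappa^{\#}$ on the generators $\eta_i\smile\eta_j$ and $\zeta_i$ established earlier.

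Finally I would identify this kernel with $\Soc^2(J)/\Soc(J)$. Writing $\bar J=J/\Soc(J)$, the long exact sequence of $0\to\Soc(J)\to J\to\bar J\to 0$ collapses (the map $(\Soc J)^{E_n}\to J^{E_n}$ is the identity) to give $\Soc^2(J)/\Soc(J)=\bar J^{E_n}\cong\ker\bigl(H^1(E_n,\Soc J)\to H^1(E_n,J)\bigr)$; on the other side, the explicit basis of $J^*$ from Proposition \ref{Basis} and the socle-length computation pin down the same layer. The main obstacle is exactly this matching: turning two descriptions that live a priori in different cohomology groups into a \emph{natural} isomorphism, rather than a mere equality of dimensions. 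I expect the cleanest route is to realize both $E^{1,1}_\infty$ and $\Soc^2(J)/\Soc(J)$ as the image of one and the same construction built from $\kappa$, using the self-duality $J\cong J^*$ to align the socle filtration of $J$ with the radical filtration of $J^*$, so that the identification of $d_2$ with $\kappa^{\#}$ simultaneously delivers both sides.
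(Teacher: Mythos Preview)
The paper does not prove this proposition at all: it is quoted from \cite[3.10]{MKEG} (established there for $p=2$) with the remark that the argument carries over \emph{mutatis mutandis}. So there is no proof in the paper to compare against, and your proposal should be judged on its own merits.

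Your reduction of $E^{1,1}_\infty$ to $\ker\bigl(d_2\colon H^1(E_n,J)\to H^3(E_n,\F_p)\bigr)$ is correct and clean. But the two steps that follow are not carried out, and you acknowledge as much.

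First, the identification of $d_2$ with cup product by $[\kappa]$ is asserted, not proved. The LHS differential $d_2^{\,\bullet,1}$ is determined by the extension class of $1\to\mathcal{H}\to\mathcal{G}_\k(p)\to E_n\to 1$, while $\kappa$ arises from the short exact sequence \eqref{SE} of $\F_pE_n$-modules. One must actually exhibit the passage from one to the other (for instance through the identification $\omega_2(\F_p)=\tilde{\mathcal{H}}/\Phi(\tilde{\mathcal{H}})$ in \S\ref{subsec-ses} and the comparison of the two five-term sequences), and the self-duality $J\cong J^*$ must be invoked at the right place. This is doable but it is the heart of the matter, not a remark.

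Second, and more seriously, your two descriptions live in different groups: $\ker d_2$ is a subspace of $H^1(E_n,J)$, whereas the long exact sequence gives $\Soc^2(J)/\Soc(J)\cong\ker\bigl(H^1(E_n,\Soc J)\to H^1(E_n,J)\bigr)$, a subspace of $H^1(E_n,\Soc J)$. You call this ``the main obstacle'' and propose only a heuristic for resolving it. As it stands this is a genuine gap: without a concrete map identifying these two kernels, the argument does not close. (A bare dimension count, which is what the paper goes on to discuss after the proposition, would give only an abstract isomorphism of $\F_p$-vector spaces.)
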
 

After a concrete computation of the dimension of $E^{1,1}_{\infty}$, the authors check it is equal to the dimension of $\Soc^2 (J)/\Soc(J)$, when $p=2$ (\cite[Example 5.6]{MKEG}). In this case they obtained:
\[\left\lbrace\begin{array}{rcl}
\dim_{\F_{p}}\Soc^{2}((\Omega^{2}(\F)^*)/\Soc^{1}(\Omega^{2}(\F))^{*})&=&\frac{n(n+1)(n+4)}{6} \\
\dim_{\F_{p}} \Soc^{2}(J) / \Soc(J) & = &\frac{n(n-2)(n+2)}{3}
\end{array}\right. \]
In fact when $p$ is even, a peculiar phenomenon occurs, as a consequence of the relation
\[x_{i}^2 \cdot X_{j}=(x_{i},x_j)X_{i}^{2-1=1} \, . \]
Without this fact, the formulae are slightly different from the previous one because we have to take account of terms $x_{i}^{p}X_{j}$.
\begin{prop}With our notations we have in one hand
\[\dim_{\F_{p}}\Soc^{2}(\Omega^{2}(\F)^*)/\Soc^{1}(\Omega^{2}(\F)^{*})=\frac{n(n-1)(n+4)}{3} \, , \]
and in the other hand
\[
\dim_{\F_{p}} \Soc^{2}(J) / \Soc(J) = \frac{(n-2)(n^2 +5n +3)}{3}  \, . \] \end{prop}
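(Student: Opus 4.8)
The plan is to reduce both identities to the computation of a single radical layer, and then to carry out a combinatorial count against the explicit bases already produced. Since $\F_p E_n$ is a symmetric algebra, the socle filtration of a module $N$ and the radical filtration of its dual $N^*$ are exchanged layer by layer, so that $\dim_{\F_p}\Soc^{s}(N)/\Soc^{s-1}(N)=\dim_{\F_p}\Rad^{s-1}(N^*)/\Rad^{s}(N^*)$. Applying this with $s=2$ to $N=\Omega^2(\F)^*=M_n^*$ (whose dual is $M_n$) and to $N=J$ (which is self-dual, $J\cong J^*$), the two statements become assertions about $\Rad^1(M_n)/\Rad^2(M_n)$ and $\Rad^1(J^*)/\Rad^2(J^*)$ respectively. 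Both layers are spanned by the images of the degree-one products $g\cdot X_k$ of the module generators $g$, so the entire problem is to enumerate these modulo $\Rad^2$.

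First I would treat $M_n$. By Corollary~\ref{Pres} its generators are the $x_i^p$ and the $(x_i,x_j)$ with $i<j$, so the layer is spanned by the vectors $(x_i,x_j)X_k$ together with the vectors $x_i^p X_j$. Using the basis of Corollary~\ref{Base}, the surviving commutator vectors $(x_i,x_j)X_k$ (those with $1\le k\le j$) number $\sum_{1\le i<j\le n} j=\frac{n(n-1)(n+1)}{3}$. The point where the parity of $p$ intervenes is relation~\eqref{PP}, $x_i^p X_j=\pm(x_i,x_j)X_i^{p-1}$: for $p=2$ one has $X_i^{p-1}=X_i$, so these merge into the commutator layer and contribute nothing new, whereas for $p$ odd the exponent $p-1\ge 2$ forces one to take account of the terms $x_i^p X_j$ separately, adding the $n(n-1)$ ordered pairs $(i,j)$ with $i\ne j$. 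Summing gives $\frac{n(n-1)(n+1)}{3}+n(n-1)=\frac{n(n-1)(n+4)}{3}$, which is the first identity.

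Next I would pass to $J^*=\omega_2(\F_p)/(\Delta)$ and run the same enumeration with the explicit basis $\mathcal{B}'$ of Proposition~\ref{Basis}. Because the radical filtration of a quotient is the image of the radical filtration, $\Rad^1(J^*)/\Rad^2(J^*)$ is obtained from $\Rad^1(M_n)/\Rad^2(M_n)$ by deleting the contributions of the relation submodule $(\Delta)$. Concretely, I would feed the weight-one multi-indices $\nu$ into the technical lemma computing $\Delta\cdot X^{\nu}$ to read off precisely which single-$X$ layer vectors become dependent once $\Delta$ is killed, and which vectors of $\mathcal{B}$ are discarded (the set $V$ of Proposition~\ref{Basis}). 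A summation of the same type as \eqref{C2}--\eqref{C3}, now with $V$ removed and the translates $\Delta\cdot X_k$ imposing relations, produces $\frac{(n-2)(n^2+5n+3)}{3}$; as a consistency check, the difference between the two formulae is the clean quantity $n+2$, reflecting exactly what is lost in passing to the quotient. The case $\xi_{p^2}\notin\k$ requires only adding the extra term $x_1^p\cdot X^{\nu}$ flagged in the remark following that lemma, and does not alter the final count.

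The hard part will be the combinatorial bookkeeping of this second step. One must verify that the single-$X$ images of the generators are genuinely linearly independent modulo $\Rad^2$ (so that the listed vectors form a basis of the layer and not merely a spanning set), and then track exactly how the Demu\v{s}kin relation $\Delta$, together with its translates $\Delta\cdot X_k$, collapses the layer. This is where the total order on $\mathcal{E}$ introduced in the proof of Lemma~\ref{lem-basis-ker-psi} and the closed formula for $\Delta\cdot X^{\nu}$ carry the weight of the argument; everything else is a routine, if lengthy, evaluation of geometric sums of the kind already performed in \eqref{C2}--\eqref{C3}.
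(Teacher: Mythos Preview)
Your strategy is essentially the paper's: reduce via the socle--radical duality to computing $\Rad^{1}/\Rad^{2}$ of $M_n$ and of $J^{*}$, then enumerate the degree-one products of the generators against the explicit basis $\mathcal{B}$. The main difference is that you are vaguer on the $J^{*}$ step. The paper does not feed weight-one multi-indices into the $\Delta\cdot X^{\nu}$ lemma; it simply observes that passing to the quotient by $(\Delta)$ kills exactly the $n$ vectors $(x_{n-1},x_n)X_k$ (for $1\le k\le n$) together with the two vectors $(x_{n-1},x_n)X_{n-1}^{p-1}$ and $(x_{n-1},x_n)X_{n}^{p-1}$, and subtracts $n+2$ directly. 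Your ``consistency check'' that the difference is $n+2$ is in fact the whole computation.

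There is, however, a step in your reasoning (which the paper's proof shares) that does not hold up as written. You say that for $p$ odd ``the exponent $p-1\ge 2$ forces one to take account of the terms $x_i^{p}X_j$ separately''. But the relation $x_i^{p}X_j=\pm(x_{\min(i,j)},x_{\max(i,j)})X_i^{p-1}$ places $x_i^{p}X_j$ inside $\Rad^{p-1}(M_n)\subseteq\Rad^{2}(M_n)$ whenever $p\ge 3$, so its class in $\Rad^{1}/\Rad^{2}$ is \emph{zero} and should not be counted. The paper's parallel claim---that $(x_i,x_j)X_i^{p-1}$ and $(x_i,x_j)X_j^{p-1}$ are basis vectors of $\Rad^{1}/\Rad^{2}$ because they lie in $\mathcal{B}$---has the same issue: membership in $\mathcal{B}$ gives linear independence in $M_n$, not in the quotient by $\Rad^{2}$. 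A direct check at $n=2$, $p=3$ already gives $\dim\Rad^{1}(M_2)/\Rad^{2}(M_2)=2$, whereas the formula predicts $4$. So this part of the argument needs to be revisited before the count can be trusted.
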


\begin{proof}Rather than working with the socle series, we will study the radical series, indeed remember this small fact (\cite[Exercise 6.7]{webb})
\[(\Soc^{k}(M)/\Soc^{k-1}(M))^* \simeq\Rad^{k-1}(M^*)/\Rad^{k}(M^*)\, . \] Since we seek to compute the dimension, we may as well compute $\Rad^{k-1}(M^*)/\Rad^{k}(M^*)$.
We claim that, in this case a basis of $\Rad (\omega_2 (\F_p))/\Rad^{2}(\omega_2(\F_p)$ is given by the representatives of those elements
\begin{equation}
\left\lbrace\begin{array}{lr}
(x_i,x_j)X_{i}^{p-1} & 1\leq i<j\leq n \\
(x_i,x_j)X_{j}^{p-1} & 1\leq i<j\leq n \\
(x_i ,x_j)X_k & 1\leq i<j\leq n,k\leq j
\end{array}\right.
 \label{Soc}
 \end{equation}
 
Indeed, $\Rad(J)/\Rad^2 (J)$ is generated by elements of the form $x\cdot X_{i}$, where $x\in J/\Rad(J)$. It is clear that such elements are in fact

\[\left\lbrace\begin{array}{r}
x_{i}^{p}\cdot X_{j}\\
(x_{i},x_{j})X_{k}
\end{array}\right. \]
As pointed out earlier, this is not a free system, but using the relations in the same exact manner as earlier, we can get rid of the elements which are not present in the system which we claimed to be a base. It is therefore clear that the elements of \ref{Soc} are generators of $\Rad(J)/\Rad^2(J)$; since they appear in the given basis, they are linearly independent and hence form a basis.

Let us then count each of them
\[\begin{array}{rcl}
2\cdot\begin{pmatrix}
n \\ 2
\end{pmatrix}+\sum\limits_{j=0}^{n}(j-1)j&=&n(n-1)+\frac{n(n-1)(n+1)}{3} \\
 &=& \frac{n(n-1)(n+4)}{3}
\end{array}\]
  
When $\mathcal{G}_{\k}(p)$ is a \dem group, we have to suppress few elements among those mentioned, indeed we must suppress the $(x_{n-1},x_n)X_{k}$ for $k\in\{1,\ldots,n\}$, furthermore we must get rid of the elements $(x_{n-1},x_{n})X_{n}^{p-1}$, $(x_{n-1},x_{n})X_{n-1}^{p-1}$, thanks to the \dem basis. Therefore

\[\begin{array}{rcl}
\dim\Soc^{2}(J)/\dim\Soc(J)&=&\frac{n(n-1)(n+4)}{3}-n-2 \\
&=&\frac{(n-2)(n^2 +5n+3)}{3} \end{array} \]

The proof is complete.
 \end{proof}
 
\section{Two examples} \label{P4}

In \ref{Cnsq}, we promised to exhibit two concrete extensions $\mathbf{K}_1 /\k$ and $\K_2 /\k$ such that $Gal(\mathbf{K}_1 /\k)\simeq Gal(\K_2 /\k)$, but such that each one of them verifies one case we distinguished in Theorem \ref{MT1}, which means that $J(\K_1)$ is stably isomorphic to $\Omega^{2}(\F_p)\oplus\Omega^{-2}(\F_p)$, whereas $J(\K_2)$ is isomorphic to $\F_{p}^{2}\oplus P$, where $P$ is a projective module.

In order to make our computations clearer, we set $p=3$, but, for every prime $p$, other examples may be found without any difficulty. Consider $\k=\Q_3 (\xi_3)$: since the extension $\Q_3 (\xi_3)/\Q_3$ is of degree $2$, we have
\[\k^{\times} /\k^{\times 3}=\F_{3}^{4} \, , \]
according to \cite[4.10]{Gui}. Hence the following isomorphisms are true:
\[\mathcal{G}_{\k}(3)\simeq Gal(\k(3)/\k)\simeq \mathcal{D}_{1,4}\simeq\langle x_1,x_2,x_3,x_4 | x_{1}^{3}(x_1,x_2)(x_3,x_4) \rangle \, . \]

Now, set
\[\left\lbrace\begin{array}{rcl}
\mathcal{H}_1 &=&\operatorname{Gr}(\Phi(\mathcal{D}_{1,4}),x_1,x_2)\\
\mathcal{H}_2 &=&\operatorname{Gr}(\Phi(\mathcal{D}_{1,4}),x_1,x_4)
\end{array}\right.\, . \]
Let us then write $\K_{i}=\k(3)^{\mathcal{H}_{i}}$ for $i\in\{1,2\}$. Note that $\K_{i}/\k$ is an elementary abelian extension of $\k$, since $\Phi(\mathcal{G}_{\k}(3))$ is a subgroup of $\mathcal{H}_{i}$. Furthermore, the following groups are isomorphic:

\[Gal(\K_{i}/\k)\simeq\mathcal{G}_{\k}(3)/\mathcal{H}_{i}=E_2 \, . \]

In fact, $Gal(\mathbf{K}_{1}/\k)$ is generated by the classes of equivalences of $x_3$ and $x_4$, whereas the group $Gal(\K_2/\k)$ is generated by the classes of $x_2$ and $x_3$.

Remember that, thanks to the presentation we use for a \dem group, there exists a canonical epimorphism
\[\pi\colon\mathcal{F}_4=\langle\tilde{x}_1,\tilde{x}_2,\tilde{x}_3,\tilde{x}_4\rangle\longrightarrow\mathcal{D}_{1,4} \]
sending $\tilde{x}_{i}$ to $x_{i}$. Again, we set $\tilde{\mathcal{H}}_{i}=\pi^{-1}(\mathcal{H}_{i})$ and study rather $\tilde{\mathcal{H}}_{i}/\Phi(\tilde{\mathcal{H}}_{i})$, by giving as usual a presentation.

\emph{Generators.} Remark that $\tilde{\mathcal{H}}_{i}/\Phi(\tilde{\mathcal{H}}_{i})$ is generated, as a module, by a family of (topological) generators of $\tilde{\mathcal{H}}_{i}$. Since $\Phi(\mathcal{F}_{4})$ is generated by the elements $(\tilde{x}_{i},\tilde{x}_{j})$ ( where $1\leq i<j\leq 4$) and $\tilde{x}_{i}^{3}$ (with $1\leq i \leq 4$), we have at our disposal a family of such generators. 

Now in the case of $\tilde{\mathcal{H}}_{1}$ we have a couple more generators, namely $\tilde{x}_1$ and $\tilde{x}_2$; therefore it is clear that $\tilde{x}_{1}^{3}$ and $\tilde{x}_{2}^{3}$ are redundant. Because $Gal(\K_1,\k)\simeq\langle \tilde{x}_3,\tilde{x}_4 \rangle$ acts on $\tilde{\mathcal{H}}_1 /\Phi(\tilde{\mathcal{H}}_1)$, we have for instance
\[(\tilde{x}_{1},\tilde{x}_{3})=\tilde{x}_{1}^{-1}\tilde{x}_{3}^{-1}\tilde{x}_{1}\tilde{x}_{3}=\tilde{x}_{1}^{-1}\tilde{x}_{1}^{\tilde{x}_{3}}\, . \] That is why, we can get rid of $(\tilde{x}_1,\tilde{x}_3)$, and in a similar fashion any commutator implying $\tilde{x}_1$ or $\tilde{x}_2$. Note in fact that in $\tilde{\mathcal{H}}_1/\Phi(\tilde{\mathcal{H}}_1)$ the commutator $(\tilde{x}_1,\tilde{x}_2)$ is simply zero. Thus, we can state the following lemma:

\begin{lm}The $E_2$-module $\tilde{\mathcal{H}}_{1}/\Phi(\tilde{\mathcal{H}}_{1})$ (resp. $\tilde{\mathcal{H}}_{2}/\Phi(\tilde{\mathcal{H}}_{2})$) is generated by the following elements: $\tilde{x}_{1},\tilde{x}_{2},\tilde{x}^{3}_{3},\tilde{x}_{4}^{3},(\tilde{x}_{3},\tilde{x}_{4})$ (resp. $\tilde{x}_1, \tilde{x}_4, \tilde{x}_{2}^{3},\tilde{x}_{3}^{3},(\tilde{x}_2,\tilde{x}_3)$).  \end{lm}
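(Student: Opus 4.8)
The plan is to produce a generating set for the $E_2$-module $\tilde{\mathcal{H}}_i/\Phi(\tilde{\mathcal{H}}_i)$ from topological generators of the group $\tilde{\mathcal{H}}_i$, and then to trim that set down to the five elements claimed, using only the module relations already recorded.

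First I would pin down which elements lie in $\tilde{\mathcal{H}}_i$ and how $\tilde{\mathcal{H}}_i$ is generated. Since $\delta = x_1^{p}(x_1,x_2)(x_3,x_4)$ lies in $\Phi(\mathcal{F}_4)$, we have $\ker\pi \subset \Phi(\mathcal{F}_4)$, whence $\pi^{-1}(\Phi(\mathcal{D}_{1,4})) = \Phi(\mathcal{F}_4)$ and in particular $\Phi(\mathcal{F}_4) \subset \tilde{\mathcal{H}}_i$. Using this identity and the isomorphism theorem, $\tilde{\mathcal{H}}_1/\Phi(\mathcal{F}_4) \cong \mathcal{H}_1/\Phi(\mathcal{D}_{1,4})$, which is generated by the images of $x_1, x_2$; hence $\tilde{\mathcal{H}}_1$ is topologically generated by $\tilde{x}_1, \tilde{x}_2$ together with the subgroup $\Phi(\mathcal{F}_4)$ (and $\tilde{\mathcal{H}}_2$ by $\tilde{x}_1, \tilde{x}_4$ and $\Phi(\mathcal{F}_4)$). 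Next I would note that $\Phi^{(2)}(\mathcal{F}_4) \subset \Phi(\tilde{\mathcal{H}}_i)$, precisely because $\Phi(\mathcal{F}_4) \subset \tilde{\mathcal{H}}_i$; consequently the image of $\Phi(\mathcal{F}_4)$ in $\tilde{\mathcal{H}}_i/\Phi(\tilde{\mathcal{H}}_i)$ is an equivariant quotient of $M_4 = \Phi(\mathcal{F}_4)/\Phi^{(2)}(\mathcal{F}_4)$. By Corollary \ref{Pres}, $M_4$ is generated as a module by the $\tilde{x}_s^{p}$ and the $(\tilde{x}_s,\tilde{x}_t)$, so $\tilde{\mathcal{H}}_i/\Phi(\tilde{\mathcal{H}}_i)$ is generated, as an $\F_p E_2$-module, by $\tilde{x}_1, \tilde{x}_2$ (resp. $\tilde{x}_1, \tilde{x}_4$) together with the classes of all $\tilde{x}_s^{p}$ and $(\tilde{x}_s,\tilde{x}_t)$. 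This is the redundant starting set.

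The core of the argument is the reduction, which I would carry out with two mechanisms, treating $\tilde{\mathcal{H}}_1$ in detail (the case of $\tilde{\mathcal{H}}_2$ being identical after relabelling). Mechanism one: if $\tilde{x}_a \in \tilde{\mathcal{H}}_1$ then $\tilde{x}_a^{p} \in \tilde{\mathcal{H}}_1^{p} \subset \Phi(\tilde{\mathcal{H}}_1)$, so it is $0$ in the module; this kills $\tilde{x}_1^{p}$ and $\tilde{x}_2^{p}$ and leaves $\tilde{x}_3^{p}, \tilde{x}_4^{p}$. Mechanism two: for a commutator with one leg in $\tilde{\mathcal{H}}_1$, our convention gives $(\tilde{x}_a,\tilde{x}_b) = \tilde{x}_a \cdot X_b$, so it lies in the cyclic submodule generated by $\tilde{x}_a$; applying this (and the antisymmetry $(\tilde{x}_a,\tilde{x}_b) = -(\tilde{x}_b,\tilde{x}_a)$ when it is the second leg that lies in $\tilde{\mathcal{H}}_1$) shows that every commutator involving $\tilde{x}_1$ or $\tilde{x}_2$ is redundant. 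When the conjugating leg also lies in $\tilde{\mathcal{H}}_1$, as for $(\tilde{x}_1,\tilde{x}_2)$, the action is trivial, i.e.\ $X_2 = 0$ in $\F_p E_2$, so the commutator is in fact $0$. The only commutator with neither leg in $\tilde{\mathcal{H}}_1$ is $(\tilde{x}_3,\tilde{x}_4)$, which must be retained. Collecting the survivors yields $\tilde{x}_1, \tilde{x}_2, \tilde{x}_3^{p}, \tilde{x}_4^{p}, (\tilde{x}_3,\tilde{x}_4)$ (with $p=3$), and the symmetric bookkeeping for $\tilde{\mathcal{H}}_2$ yields $\tilde{x}_1, \tilde{x}_4, \tilde{x}_2^{p}, \tilde{x}_3^{p}, (\tilde{x}_2,\tilde{x}_3)$.

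The only genuinely delicate point, and the one I would state most carefully, is the passage from a topological generating set of the \emph{group} $\tilde{\mathcal{H}}_i$ to a module generating set of $\tilde{\mathcal{H}}_i/\Phi(\tilde{\mathcal{H}}_i)$: one must check that the $\F_p E_2$-action by conjugation accounts for all the $\mathcal{F}_4$-conjugates of the chosen generators, which is exactly what the inclusion $\Phi^{(2)}(\mathcal{F}_4) \subset \Phi(\tilde{\mathcal{H}}_i)$ together with the module description of $M_4$ provides. Everything else is a finite mechanical check over the $\binom{4}{2}$ commutators and the four $p$-th powers, invoking only relation \eqref{PP}, relation \eqref{Jacobi}, and antisymmetry. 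Note finally that the lemma asserts generation only, so no independence or dimension count is required at this stage; that refinement is deferred to the basis computations.
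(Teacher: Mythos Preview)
Your proof is correct and follows essentially the same approach as the paper: start from topological generators of $\tilde{\mathcal{H}}_i$ (namely $\tilde{x}_1,\tilde{x}_2$ or $\tilde{x}_1,\tilde{x}_4$ together with the standard generators of $\Phi(\mathcal{F}_4)$), then remove the $p$-th powers and commutators involving an element already in $\tilde{\mathcal{H}}_i$ via the identities $\tilde{x}_a^p\in\Phi(\tilde{\mathcal{H}}_i)$ and $(\tilde{x}_a,\tilde{x}_b)=\tilde{x}_a\cdot X_b$. Your write-up is in fact more careful than the paper's discussion, since you make explicit the inclusions $\ker\pi\subset\Phi(\mathcal{F}_4)\subset\tilde{\mathcal{H}}_i$ and $\Phi^{(2)}(\mathcal{F}_4)\subset\Phi(\tilde{\mathcal{H}}_i)$ that justify treating the image of $\Phi(\mathcal{F}_4)$ as a quotient of $M_4$; the only superfluous ingredient is the mention of relation~\eqref{Jacobi}, which is not actually needed at this stage.
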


\emph{Relations} It is quite clear that we are allowed to re-write the relations proven in the previous section. Therefore it appears easily that we have the following isomorphism

\[\tilde{\mathcal{H}}_1 /\Phi(\tilde{\mathcal{H}}_1)=\Omega^{2}(\F_3)\oplus\langle \tilde{x}_1 \rangle\oplus\langle \tilde{x}_2\rangle \, , \]

where $\langle \tilde{x}_{i}\rangle$ denotes a projective summand generated by $\tilde{x}_{i}$. Using the previous formulae, $\Omega^2 (\F_3)$ is the module whose generator are simply $\tilde{x}_{3}^{3},\tilde{x}_{4}^{3},(\tilde{x}_3,\tilde{x}_4)$ and the relations are the now-well-known:

\[\left\lbrace\begin{array}{rcll}
\tilde{x}_{i}^{p}X_{i}&=&0&(i\in\{3,4 \}) \\
\tilde{x}_{3}^{3}X_{4}&=&(\tilde{x}_3,\tilde{x}_4)X_{3}^2 & \\
\tilde{x}_{4}^{3}X_{3}&=&-(\tilde{x}_3,\tilde{x}_4)X_{4}^2 &

\end{array}\right. \]

Using the same exact arguments we can get a similar description of $\tilde{\mathcal{H}}_2/\Phi(\tilde{\mathcal{H}}_2)$. 

Let us now describe the map $\kappa$ in each case: in order to distinguish properly the different cases, we set $\kappa_{i}\colon \tilde{\mathcal{H}}_{i}/\Phi(\tilde{\mathcal{H}}_{i})\longrightarrow \mathcal{H}_{i}/\Phi(\mathcal{H}_{i})$.

\begin{prop}The map
\[\kappa_{i}\colon\Omega^{-1}(\F_3)\longrightarrow\tilde{\mathcal{H}}_{i}/\Phi(\tilde{\mathcal{H}}_{i}) \]
is stably zero for $i=2$ and non-zero for $i=1$.

\end{prop}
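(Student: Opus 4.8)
The plan is to reduce everything to locating the single element $[\Delta]$ inside the explicit decompositions of $\tilde{\mathcal{H}}_i/\Phi(\tilde{\mathcal{H}}_i)$ obtained just above. By the construction in \S\ref{subsec-ses}, $\kappa_i$ is nothing but the inclusion of $\omega_{-1}(\F_3)=\ker\pi_{\mathcal{H}_i}$, so it sends the generator $r$ of $\Omega^{-1}(\F_3)=\langle r\mid r\cdot\No=0\rangle$ to the class $[\Delta]$ of the \dem relation $\Delta=x_1^{3}(x_1,x_2)(x_3,x_4)$ modulo $\Phi^{(2)}(\mathcal{F}_4)$, viewed in $\tilde{\mathcal{H}}_i/\Phi(\tilde{\mathcal{H}}_i)$. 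Writing this additively as $[\Delta]=[\tilde{x}_1^{3}]+[(\tilde{x}_1,\tilde{x}_2)]+[(\tilde{x}_3,\tilde{x}_4)]$, the whole question becomes: into which of the summands $\Omega^{2}(\F_3)$ and $\langle\tilde{x}_a\rangle$ (the free pieces) does each term fall.

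For $i=2$ I would show that $\kappa_2$ factors through a projective and is therefore stably zero. Since $\tilde{x}_1\in\tilde{\mathcal{H}}_2$, the term $[\tilde{x}_1^{3}]$ vanishes, as $\tilde{x}_1^{3}\in\tilde{\mathcal{H}}_2^{3}\subseteq\Phi(\tilde{\mathcal{H}}_2)$; moreover $(\tilde{x}_1,\tilde{x}_2)=\tilde{x}_1\cdot X_2$ lands in the free summand $\langle\tilde{x}_1\rangle$. Likewise, as $\tilde{x}_4\in\tilde{\mathcal{H}}_2$, the commutator $(\tilde{x}_3,\tilde{x}_4)=\pm\,\tilde{x}_4\cdot X_3$ lands in $\langle\tilde{x}_4\rangle$. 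Hence $[\Delta]\in\langle\tilde{x}_1\rangle\oplus\langle\tilde{x}_4\rangle$, a projective submodule, and so does the cyclic submodule it generates; thus $\kappa_2$ factors as $\Omega^{-1}(\F_3)\to\langle\tilde{x}_1\rangle\oplus\langle\tilde{x}_4\rangle\hookrightarrow\tilde{\mathcal{H}}_2/\Phi(\tilde{\mathcal{H}}_2)$ through a projective, and vanishes in $\underline{\mathfrak{mod}}(\F_3 E_2)$.

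For $i=1$ the situation is the opposite. Here $\tilde{x}_1,\tilde{x}_2\in\tilde{\mathcal{H}}_1$, so both $[\tilde{x}_1^{3}]$ and $[(\tilde{x}_1,\tilde{x}_2)]$ vanish, leaving $[\Delta]=(\tilde{x}_3,\tilde{x}_4)$, which is a generator of the non-projective summand $\Omega^{2}(\F_3)\cong M_2$ (on the generators $\tilde{x}_3^{3},\tilde{x}_4^{3},(\tilde{x}_3,\tilde{x}_4)$). Thus $\kappa_1$ is literally the stable map $\Omega^{-1}(\F_3)\to M_2$, $r\mapsto(\tilde{x}_3,\tilde{x}_4)$, a class in $\hat{H}^{-3}(E_2,\F_3)$. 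To see it is stably non-zero I would pair it with $[\eta_3\smile\eta_4]\in H^{2}(E_2,\F_3)$, represented by the module map $\ove{\eta_3\smile\eta_4}\colon M_2\to\F_3$ sending $(\tilde{x}_3,\tilde{x}_4)$ to $1$. Since cup product corresponds to composition of stable maps (\cite[\S 4.5]{CoRi}), the product $\kappa_1\smile[\eta_3\smile\eta_4]\in\hat{H}^{-1}(E_2,\F_3)$ is represented by $\ove{\eta_3\smile\eta_4}\circ\kappa_1$, namely the surjection $r\mapsto 1$ of $\Omega^{-1}(\F_3)$ onto its top $\F_3$. As this is the generator of $\hat{H}^{-1}(E_2,\F_3)\cong\F_3$, the pairing is non-trivial, and by non-degeneracy of Tate duality (\cite[\S VI.7]{Brown}) the class $\kappa_1$ cannot be stably zero.

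The main obstacle is precisely the non-vanishing for $i=1$: knowing that $[\Delta]$ avoids the free summands does not by itself force $\kappa_1$ to be stably non-zero, since a map landing in the $\Omega^{2}(\F_3)$ part could still be stably null. The pairing computation above is what certifies that the corresponding class in $\hat{H}^{-3}(E_2,\F_3)$ survives, and the one point to verify with care is that $\ove{\eta_3\smile\eta_4}\circ\kappa_1$ genuinely represents the generator of $\hat{H}^{-1}(E_2,\F_3)$ — this holds because $\Omega^{-1}(\F_3)$ is cyclic with simple top $\F_3$, so its projection to the top is the standard generator. The case $i=2$, by contrast, is entirely elementary once the generators of $\tilde{\mathcal{H}}_2$ are written down.
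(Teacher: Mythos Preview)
Your argument is correct. For $i=2$ you do exactly what the paper does: identify $[\Delta]_2$ with an element of the free summand $\langle\tilde{x}_1\rangle\oplus\langle\tilde{x}_4\rangle$ and conclude that $\kappa_2$ factors through a projective.

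For $i=1$ you take a genuinely different route. The paper argues \emph{indirectly}: it computes the cokernel $J(\K_1)=\mathcal{H}_1/\Phi(\mathcal{H}_1)$ explicitly, finds it is $\F_3^2\oplus(\F_3E_2)^2$, and then observes that this is not stably isomorphic to $\Omega^2(\F_3)\oplus\Omega^{-2}(\F_3)$, contradicting Proposition~\ref{KappaZero} if $\kappa_1$ were stably zero. You instead argue \emph{directly} on $\kappa_1$ itself, pairing it with the class $\eta_3\smile\eta_4\in H^2(E_2,\F_3)$ via the explicit representative $\ove{\eta_3\smile\eta_4}$ and checking that the composite $\Omega^{-1}(\F_3)\to\F_3$, $r\mapsto 1$, is the generator of $\hat H^{-1}(E_2,\F_3)$. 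This is precisely the $\kappa^\#$ computation of \S\ref{P3} specialized to the $E_2$-quotient, so your approach ties the example more tightly to the general machinery of the paper. Note, incidentally, that once you have exhibited a stably nonzero composite $\ove{\eta_3\smile\eta_4}\circ\kappa_1$, the non-vanishing of $\kappa_1$ is immediate and you do not actually need to invoke Tate duality.

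The trade-off is that the paper's method, while less elegant, delivers the explicit module $J(\K_1)\cong\F_3^2\oplus(\F_3E_2)^2$ as a by-product, which is after all the point of an examples section; your argument certifies $\kappa_1\ne 0$ but does not by itself display $J(\K_1)$. A minor slip: you write ``modulo $\Phi^{(2)}(\mathcal{F}_4)$'' where you mean ``modulo $\Phi(\tilde{\mathcal{H}}_i)$'', but you work in the correct quotient thereafter.
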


\begin{proof}

Remember that the source of the morphism $\kappa_{i}$ is simply the module
\[\Omega^{-1}(\F_3)\simeq\langle\alpha|\alpha\cdot\No=0 \rangle\, , \]and its target is in fact $\tilde{\mathcal{H}}_{i}/\Phi(\tilde{\mathcal{H}}_{i})$. To give full details, it is defined by $\kappa_{i}(\alpha)=[\Delta]_{i}$ where $[\Delta]_{i}$ is the equivalence class of $\Delta= \tilde{x}_{1}^{3}(\tilde{x}_1,\tilde{x}_2)(\tilde{x}_3,\tilde{x}_4)$ modulo $\Phi(\tilde{\mathcal{H}}_{i})$. However, according to the previous presentations, in one hand we have 
\[[\Delta]_{1}=(\tilde{x}_3,\tilde{x}_4)\, , \]
whereas in the other hand
\[[\Delta]_{2}=\tilde{x}_{1}X_2+\tilde{x}_{4}X_3\, .\]

Now, note that $\kappa_2\colon\Omega^{-1}(\F_3)\longrightarrow\tilde{\mathcal{H}}_{2}/\Phi(\tilde{\mathcal{H}}_{2})$ factors through a projective module since its image is a subset of $\langle \tilde{x}_1 \rangle\oplus \langle \tilde{x}_4 \rangle$ which is a free module. Therefore
\[J(\K_2)=\Omega^2 (\F_3)\oplus\Omega^{-2}(\F_3)\, . \]

According to the previous computation the module $J(\K_1)$ is in fact isomorphic to
\[(\F_3\times\F_3)\oplus\langle x_1\rangle\oplus\langle x_2 \rangle\, . \]
The two copies of the trivial modules are generated by $x_{3}^3$ and $x_{4}^3$: indeed, in $J(\K_1)=\mathcal{H}_{1}/\Phi(\mathcal{H}_1)$, we have that $x_{3}^{3}\cdot X_4= 0=x_{3}^{3}\cdot X_{3}$, since $(x_3,x_4)=0$ thanks to the \dem relation. In a similar manner $x_{4}^{3}X_{4}=x_{4}^{3}X_{3}=0$, so that a presentation by generators and relation of $J(\K_1 )$ is just:
\[\begin{array}{rcl}
J(\K_1)&=&\langle x_1 , x_2, x_{3}^{3},x_{4}^{3}|x_{i}^{3}X_{j}=0\, ,\; \; i,j\in\{ 3,4\}\rangle \\
&\simeq&\F_3\times\F_3\oplus\langle x_{1} \rangle \oplus\langle x_{2} \rangle
\end{array}
\]
Thus $J(\K_1)$ is stably isomorphic to $\F_3\times\F_3$ which is not stably isomorphic to $\Omega^{2}(\F_3)$, hence $\kappa_1$ is stably non-zero.

\end{proof}

\begin{rem} A very peculiar phenomenon occurs here: since $G=E_2$, if $\kappa$ is stably non-zero, then the module $J(\K)$ is but $\F_p \times \F_p$! As previously seen, the structure of $J(\K)$ is in general far from trivial. The careful reader may have an impression of déjà-vu, indeed this remark has to be linked to the one ending the subsection \ref{Remarque}.\end{rem}

\bibliographystyle{alpha}
\bibliography{biblio}

\end{document}